\documentclass[11pt]{article}

\usepackage{mathpazo}
\usepackage{mathdots}
\usepackage{amsfonts}
\usepackage{amsmath}
\usepackage{graphicx}
\usepackage{latexsym}
\usepackage{mathabx}
\usepackage{bm}
\usepackage{longtable}
\usepackage{booktabs}
\usepackage{makecell}

\usepackage[margin=1.05in]{geometry}
  
\usepackage[T1]{fontenc}
\usepackage{times}
\usepackage{array}
\usepackage{enumitem}
\usepackage{amssymb}
\usepackage{pgfplots}
\usepackage{pgf}
\usepackage{tikz}
\usetikzlibrary{patterns}
\usepgfplotslibrary{patchplots} 
\usetikzlibrary{decorations.markings}
\usetikzlibrary{pgfplots.patchplots} 
\pgfplotsset{width=9cm,compat=1.12}
    \usepgfplotslibrary{colormaps,external}
    \definecolor{ocre}{RGB}{0,96,128}
    \definecolor{purp}{RGB}{112,0,112}
    \pgfplotsset{
    	colormap={ocrefade}{
    		rgb255=(150,216,255)
    		rgb255=(0,64,96)
    	}
    }

\usepackage{amsthm}

\newtheorem{definition}{Definition}
\newtheorem{lemma}{Lemma}
\newtheorem{theorem}{Theorem}
\newtheorem{corollary}{Corollary}

\newtheorem{proposition}{Proposition}

\newtheorem{example}{Example}

\usepackage[dvipsnames]{xcolor}
\usepackage{makeidx}
\usepackage[colorlinks=true,linkcolor=blue,anchorcolor=blue,citecolor=red,urlcolor=magenta]{hyperref}
\usepackage{caption}
\def\R{\mathbb{R}}

\def\Zp{\mathbb{Z}_{> 0}}
\def\Znn{\mathbb{Z}_{\geq 0}}
\def\vv{\mathbf{v}} 

\begin{document}

\title{Enumeration of Laplacian integral and \\ $\{-1,0,1\}$-diagonalizable graphs}

\author{ 
	Nathaniel Johnston\textsuperscript{1},  Sarah Plosker\textsuperscript{2}, and Luis M.~B.~Varona\textsuperscript{1,3,4}
}

\maketitle

\begin{abstract}
    A graph with Laplacian matrix $L$ is called Laplacian integral if the eigenvalues of $L$ are all integers, and it is called $\{-1,0,1\}$-diagonalizable if $L$ has a full set of eigenvectors with entries from $\{-1,0,1\}$. We herein develop a structure theorem for both Laplacian integral graphs and $\{-1,0,1\}$-diagonalizable graphs of prime order, and combine it with some novel computational techniques to characterize all such graphs for orders larger than was previously possible. For example, we enumerate all Laplacian integral and $\{-1,0,1\}$-diagonalizable graphs of order $13$ or less, all $\{-1,0,1\}$-diagonalizable graphs of prime order $23$ or less, all regular integral graphs of order $15$ or less, and all regular $\{-1,0,1\}$-diagonalizable graphs of prime order $53$ or less. As an immediate byproduct of our work, we show that the $S_{n,n}$ conjecture for Laplacian integral graphs is true when $n = 12$, thus making $n = 16$ the smallest open case; additionally, we disprove two related conjectures regarding Laplacian spectra. We also establish an exponential lower bound on the number of connected $\{-1,0,1\}$-diagonalizable graphs of order $n$, thus beating the previously best-known (subexponential) lower bound. Finally, we show that every bipartite $\{-1,0,1\}$-diagonalizable graph is regular (a fact that fails to generalize to Laplacian integral graphs).\\

    \noindent \textbf{Keywords:} Laplacian integral graphs, $\{-1,0,1\}$-diagonalizable graphs, Hadamard diagonalizable graphs, eigenspaces\\
	
	\noindent \textbf{MSC2020 Classification:}  
  05C50; 
  15A18  
\end{abstract}

\addtocounter{footnote}{1}
\footnotetext{Department of Mathematics \& Computer Science, Mount Allison University, Sackville, NB, Canada E4L 1E4}
\addtocounter{footnote}{1}
\footnotetext{Department of Mathematics \& Computer Science, Brandon University, Brandon, MB, Canada R7A 6A9}
\addtocounter{footnote}{1}
\footnotetext{Department of Politics \& International Relations, Mount Allison University, Sackville, NB, Canada E4L 1E4}
\addtocounter{footnote}{1}
\footnotetext{Department of Economics, Mount Allison University, Sackville, NB, Canada E4L 1E4}

\section{Introduction}

The study of the spectral properties of a matrix associated with a graph is the central focus of the field of spectral graph theory. An \emph{integral} graph is one for which all of the eigenvalues of its adjacency matrix are integers. Integral graphs have been studied since the 1970s \cite{harary2006graphs}; see \cite{balinska2002survey, wang2005survey} for surveys on the topic. Similarly, a \emph{Laplacian integral} graph is one for which all of the eigenvalues of its Laplacian matrix are integers. Laplacian integral graphs have been studied since the 1990s \cite{grone1990laplacian,GM94,merris1994degree,merris1994laplacian}, though other spectral properties of the Laplacian matrix were investigated before that \cite{fiedler1973algebraic}. There has recently been increased interest in exploring not only these graphs but also other related families because of their relationships to continuous-time quantum walks (CTQWs), which model information flow in quantum networks.

As a refinement of Laplacian integrality, the eigenvectors of Laplacian integral graphs present another structure often of interest \cite{Mer98,MN03,CKK19}. For example, graphs with an orthogonal basis of eigenvectors with entries from $\{-1,1\}$ are called \emph{Hadamard-diagonalizable} \cite{BFK11,Breen22}; one can readily verify that such graphs are necessarily Laplacian integral. Several other variants that have been explored include weakly Hadamard-diagonalizable graphs \cite{adm2021weakly} (which slightly relax the orthogonality requirement and allow entries from $\{-1,0,1\}$ more broadly) as well as $\{-1,0,1\}$-diagonalizable and $\{-1,1\}$-diagonalizable graphs \cite{johnston2025laplacian} (which remove the orthogonality requirement entirely).

These families of graphs connect to CTQWs in many ways---for example, it was shown in \cite{God11} that a regular graph is periodic (roughly, the flow of quantum information in the network repeats periodically) if and only if it is integral.\footnote{For regular graphs, integrality is equivalent to Laplacian integrality.} It was shown in \cite{johnston2017} that Hadamard-diagonalizable graphs are particularly conducive to perfect state transfer (roughly, quantum information flows completely from one vertex to another), and it was shown in \cite{MMP25} that the same is true of weakly Hadamard-diagonalizable graphs. Most recently, $\{-1,0,1\}$-diagonalizable graphs have been shown to be particularly conducive to pair state transfer (see \cite{Mon26}, though $\{-1,0,1\}$-diagonalizable graphs were called \emph{simply structured} graphs there).

In this work, we focus on the families of Laplacian integral graphs and $\{-1,0,1\}$-diagonalizable graphs. We present a structure theorem for such graphs of prime order which, when combined with extensive computations (see Appendix~\ref{sec:appendix_compute_lapint}), lets us extend the enumeration of all Laplacian integral and $\{-1,0,1\}$-diagonalizable graphs to larger orders than was previously possible. In particular, our most notable computational results include:
\begin{itemize}
    \item We enumerate all Laplacian integral graphs of order $13$ or less (previously only known in the connected case up to order $11$ \cite{oeisA363064} and in the general case up to order $10$ \cite{oeisA363065}) and all $\{-1,0,1\}$-diagonalizable graphs of order $13$ or less (previously only known up to order $9$ \cite[Table~3]{johnston2025laplacian}). We also enumerate all $\{-1,0,1\}$-diagonalizable graphs of orders $17$, $19$, and $23$.
    
    \item We enumerate all regular integral graphs and all regular $\{-1,0,1\}$-diagonalizable graphs of order $15$ or less (previously such regular integral graphs were known up to order $12$ \cite{BKSZ01}, with just a partial enumeration for $n = 13$ \cite{BSZ09}). We also enumerate all regular integral graphs of orders $17$ and $19$, and all regular $\{-1,0,1\}$-diagonalizable graphs of prime order $53$ or less.
    
    \item We enumerate all connected bipartite $\{-1,0,1\}$-diagonalizable graphs of order $16$ or less.
\end{itemize}
Our most notable theoretical contributions include:
\begin{itemize}
    \item We prove a decomposition theorem for prime-order Laplacian integral and $\{-1,0,1\}$-diagonalizable graphs (Theorem~\ref{thm:prime_main}), which makes it trivial to construct all such graphs of prime order whenever all such graphs of smaller orders are known. This result has numerous immediate consequences, such as making the enumeration of order-$13$ Laplacian integral graphs trivial, and explaining why there is no $6$-regular order-$13$ Laplacian integral graph (see Corollary~\ref{cor:reg_lap_prime_existence} and \cite[Problem~3]{BSZ09}).
    
    \item As an immediate corollary of our enumeration of Laplacian integral graphs, we extend a conjecture on Laplacian integral graphs that is known to be true for $n \leq 11$ vertices to $n \leq 15$. In particular, in \cite{fallat2005graphs}, it was conjectured that there exists no graph on $n$ vertices whose Laplacian spectrum is $\{0,1,\ldots, n-1\}$ (this is called the \emph{$S_{n,n}$ conjecture}). This conjecture was proved for all graphs on $n$ vertices when $n$ is prime or $n \equiv 2,3 \pmod{4}$, and it was verified computationally for all graphs up to order $11$ and for all self-complementary graphs up to order $12$ \cite{HKT22}. Our enumeration of order $n = 12$ Laplacian integral graphs shows that the $S_{n,n}$ conjecture is true (without the self-complementarity assumption) in that case as well, leaving $n = 16$ as the smallest open case.

    \item Our enumeration results additionally disprove multiple conjectures from~\cite{HT23,HT25}. Those papers define
    \[
        S_{\{i,j\}_n^m} = \{0,1,2,\ldots,m-1,m,m,m+1,\ldots,n-1,n\} \setminus \{i,j\}
    \]
    (i.e., the set of non-negative integers no larger than $n$, with $m$ occurring twice, excluding $i$ and $j$).

    Conjecture~5.4 of \cite{HT23} says that if $n \geq 9$ then no spectra of the form $S_{\{i,n\}_n^m}$ are realized by Laplacian matrices of graphs. Our computational results show that this is false, and there are exactly two counterexamples when $n \in \{9,10,\ldots,13\}$: the spectrum $S_{\{8,9\}_9^6}$ is realized by the Laplacian matrix of the graph in Figure~\ref{fig:Sn2_counterexample}, and the spectrum $S_{\{1,9\}_9^3}$ is attained by its complement.

    \begin{figure}[!htb]
        \centering
        \begin{tikzpicture}[scale=1.15,every node/.style={circle,draw=black,fill=gray!50,line width=1.4pt,minimum size=9pt,inner sep=2pt},every path/.style={draw=black,line width=1pt}]
        
        \node (a) at (1,2) {};        
        \node (b) at (1,0) {};        
        \node (c) at (-1,1) {};   
        \node (d) at (0,1) {};   
        \node (u) at (2,1) {};    
        
        \node (e) at (3,2) {};    
        \node (f) at (3,0) {};   
        \node (g) at (4,1) {};    
        \node (h) at (5,1) {};    
        
        \draw (a) -- (e);
        \draw (e) -- (f);
        \draw (f) -- (b);
        \draw (b) -- (a);
        
        \draw (u) -- (a);
        \draw (u) -- (b);
        \draw (u) -- (e);
        \draw (u) -- (f);
        
        \draw (c) -- (a);
        \draw (c) -- (b);
        \draw (d) -- (a);
        \draw (d) -- (b);
        
        \draw (g) -- (e);
        \draw (g) -- (f);
        \draw (h) -- (e);
        \draw (h) -- (f);
        \draw (g) -- (h);
        
        \end{tikzpicture}
        \caption{A graph with spectrum $S_{\{8,9\}_9^6}$, thus providing a counterexample to \cite[Conjecture~5.4]{HT23}.}\label{fig:Sn2_counterexample}
    \end{figure}
    
    Similarly, Conjecture~5.1 of \cite{HT25} says that if $n \geq 6$ then no spectra of the form $S_{\{1,j\}_n^2}$ are realized by Laplacian matrices of graphs. Our computational results show that this is false, and there are exactly three counterexamples when $n \in \{6,7,\ldots,12\}$:
    \begin{itemize}
        \item The spectrum $S_{\{1,7\}_8^2}$ is realized by the graph $(K_2 \sqcup (K_1 \sqcup K_2 \sqcup K_{1,2})^{\textup{c}})^{\textup{c}}$.

        \item The spectrum $S_{\{1,8\}_9^2}$ is realized by the graph $(K_2 \sqcup (K_1 \sqcup K_2 \sqcup W)^{\textup{c}})^{\textup{c}}$, where $W$ is the $4$-vertex paw graph.

        \item The spectrum $S_{\{1,11\}_{12}^2}$ is realized by the graph $(K_2 \sqcup (K_1 \sqcup K_2 \sqcup J)^{\textup{c}})^{\textup{c}}$, where $J$ is the $7$-vertex graph $J = (K_1 \sqcup (K_1 \sqcup (K_2 \sqcup K_{1,2})^{\textup{c}})^{\textup{c}})^{\textup{c}}$.
    \end{itemize}

    \item We prove an exponential lower bound on the number of connected $\{-1,0,1\}$-diagonalizable graphs (Theorem~\ref{thm:nozo_lower_bound}), thus improving upon the known subexponential lower bound of \cite[Corollary~4]{johnston2025laplacian}.
    
    \item We prove that every connected bipartite $\{-1,0,1\}$-diagonalizable graph is regular (Theorem~\ref{thm:bipartite_imp_regular}).
\end{itemize}

The remainder of the paper is organized as follows. In Section~\ref{sec:notation}, we introduce our notation and the background material needed throughout. In particular, Section~\ref{sec:balanced} is devoted to the concept of balanced vectors, which are one of the main tools that we use to characterize $\{-1,0,1\}$-diagonalizable graphs. In Section~\ref{sec:prime}, we present and prove our structure theorem for Laplacian integral and $\{-1,0,1\}$-diagonalizable graphs of prime order. This helps us enumerate Laplacian integral graphs and $\{-1,0,1\}$-diagonalizable graphs in Sections~\ref{sec:counting_lap} and~\ref{sec:counting_zero_one}, respectively.

In Section~\ref{sec:prime_regular}, we explore what these results tell us if we restrict our attention to \emph{regular} integral graphs, and we do the same for regular $\{-1,0,1\}$-diagonalizable graphs in Section~\ref{sec:prime_regular_negone}. In Section~\ref{sec:asymp}, we show that if $n$ is large then there are exponentially many connected $\{-1,0,1\}$-diagonalizable graphs. In Section~\ref{sec:bipartite}, we focus on connected bipartite $\{-1,0,1\}$-diagonalizable graphs, and in particular we show that every such graph is regular. We conclude with some open problems in Section~\ref{sec:conclude}. Finally, we describe in the appendix our computational methods for determining Laplacian integrality and $\{-1,0,1\}$-diagonalizability (code and raw data are available at \cite{VaronaCode}).

\section{Notation and preliminaries}\label{sec:notation}

We use bold lowercase letters like $\vv \in \mathbb R^d$ to denote a real-valued $d$-dimensional vector and subscripts like $v_j$ or $[\vv]_j$ to denote the $j$-th entry of $\vv$. Let $\mathbf{1}_d$ and $\mathbf{0}_d$ denote the all-ones and all-zeros vectors, respectively, or simply $\mathbf{1}$ and $\mathbf{0}$ when the dimension is understood. We use $\mathcal{M}_{m,n}$ to denote the set of $m \times n$ real matrices, with $\mathcal{M}_n$ being used when $m=n$. Let $I_n$, $J_n$, and $O_n$ denote the $n\times n$ identity matrix, all-ones matrix, and all-zeros matrix, respectively, or simply $I$, $J$, or $O$ when the dimension is understood. Given a matrix $A \in \mathcal{M}_{m,n}$, its $(i,j)$-th entry is denoted $a_{i,j}$ or $[A]_{i,j}$. 

Let $G$ be a simple graph---an unweighted, undirected graph with no loops---with vertex and edge sets $V(G)$ and $E(G)$, respectively. Let $G^\textup{c}$ be the complement of $G$. Let $A(G)$, $D(G)$, and $L(G):=D(G)-A(G)$ be the adjacency matrix, diagonal degree matrix, and Laplacian matrix of $G$, respectively. Let $\mathrm{Lin}(G)$ be the line graph of $G$. We use $\sqcup$ to denote the disjoint union of graphs, and $G \mathbin{\square} H$ for the Cartesian product of $G$ and $H$. We use the notation $kG = G \sqcup G \sqcup \cdots \sqcup G$ to denote the disjoint union of $k$ copies of $G$. A complete multipartite graph $K_{n_1, n_2, \dots, n_d}$ is a $d$-partite graph on $n$ vertices in which there is an edge between every pair of vertices from the $d$ different independent sets of sizes $n_1, n_2, \dots, n_d$, where $\sum_{j=1}^d n_j=n$. We use $C_n$ to denote the cycle graph of order $n$ (the ``order'' of a graph $G$ is its number of vertices, denoted by $n=|V(G)|$), and $K_n$ for the complete graph of order $n$.

We now recall \cite[Definition 1]{johnston2025laplacian}:

\begin{definition}\label{defn:negoneone_diag}
    A graph $G$ is \emph{$\{-1,0,1\}$-diagonalizable} if there is a basis of eigenvectors for $L(G)$ whose entries all belong to $\{-1,0,1\}$. Equivalently, $G$ is $\{-1,0,1\}$-diagonalizable if there exists an invertible matrix $P$ whose entries all belong to $\{-1,0,1\}$ with the property that $P^{-1}L(G)P$ is diagonal.
\end{definition}
 
It is straightforward to show that every $\{-1,0,1\}$-diagonalizable graph is Laplacian integral \cite{johnston2025laplacian}. One of our main tasks is to enumerate Laplacian integral and $\{-1,0,1\}$-diagonalizable graphs of small orders. For this purpose, it will be convenient to give names to the functions that count these graphs:
\begin{itemize}
    \item $l(n)$: the number of Laplacian integral graphs of order $n$.
    
    \item $cl(n)$: the number of connected Laplacian integral graphs of order $n$.
    
    \item $s(n)$: the number of $\{-1,0,1\}$-diagonalizable graphs of order $n$.\footnote{We use ``s'' for the name of this function in reference to the name ``structurally simple'' that was given to these graphs in \cite{Mon26}.}
    
    \item $cs(n)$: the number of connected $\{-1,0,1\}$-diagonalizable graphs of order $n$.
\end{itemize}

Finally, we will use the notation $m_p(k)$ for the number of occurrences of $k$ in the multiset $p\in P_n$, where $P_n$ is the set of integer partitions of $n$. For example, if $n = 4$ then $p = \{2,1,1\}$ is an integer partition of $4$, and
\[
    P_4 = \big\{ \{1,1,1,1\}, \ \{2,1,1\}, \ \{2,2\}, \ \{3,1\}, \ \{4\}\big\}.
\]

\subsection{Balanced vectors}\label{sec:balanced}

The concept of ``balanced'' vectors plays a key role in the theory of $\{-1,0,1\}$-diagonalizable graphs \cite{johnston2025laplacian}. Here, we recall the definition and some basic results concerning these vectors:

\begin{definition}\label{defn:balanced}
    A vector $\vv \in \R^d$ ($d \geq 2$) is \emph{balanced} if there exists a matrix $A \in \mathcal{M}_{d-1,d}$, all of whose entries belong to $\{-1,0,1\}$, for which $\mathrm{null}(A) = \mathrm{span}(\vv)$. Every $\vv \in \R^d$ for $d = 1$ is balanced.
\end{definition}

By \cite[Fact~1]{johnston2025laplacian}, every balanced vector can have its entries scaled and permuted, and its signs changed (without changing balancedness), so it suffices to consider balanced vectors $\vv \in \Zp^d$ that have their entries sorted in non-increasing order (equivalently, we can associate balanced vectors with balanced \emph{multisets} of positive integers). We make these assumptions about balanced vectors throughout the remainder of the paper. We now prove some basic results about balanced vectors that we will need later.

\begin{lemma}\label{lem:balanced_max_entry}
    Let $n,d \in \Zp$ ($d \geq 2$) and suppose $\vv = (v_1, v_2, \ldots, v_d) \in \Zp^d$ is balanced and has
    \[
        v_1 + v_2 + \cdots + v_d = n.
    \]
    Then $\displaystyle \max_{1 \leq j \leq d}\{v_j\} \leq \lfloor n/2 \rfloor$.
\end{lemma}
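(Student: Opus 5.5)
Write $v_1 = \max_j v_j$ (recall the entries are sorted in non-increasing order) and use the definition of balancedness directly. There is a matrix $A \in \mathcal{M}_{d-1,d}$ with entries in $\{-1,0,1\}$ and $\mathrm{null}(A) = \mathrm{span}(\vv)$. Then $A$ has rank $d-1$, and $A\vv = \mathbf{0}$.

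The main step is to find a row of $A$ with a nonzero entry in the first column. If every row of $A$ had a zero first entry, then the standard basis vector $\mathbf{e}_1$ would lie in $\mathrm{null}(A) = \mathrm{span}(\vv)$. That is impossible, because $\vv$ has all entries positive and $d \geq 2$, so $\vv$ has at least two nonzero coordinates. Hence some row $\mathbf{a}$ of $A$ has $a_1 = \pm 1$. Replacing $\mathbf{a}$ by $-\mathbf{a}$ if necessary, we may take $a_1 = 1$.

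The equation $\mathbf{a}\cdot\vv = 0$ then gives
\[
    v_1 = -\sum_{j=2}^d a_j v_j \leq \sum_{j=2}^d |a_j| v_j \leq \sum_{j=2}^d v_j = n - v_1,
\]
where the last inequality uses $|a_j| \leq 1$ and $v_j > 0$. This yields $2v_1 \leq n$, so $v_1 \leq n/2$. Since $v_1$ is an integer, $v_1 \leq \lfloor n/2 \rfloor$.

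The only point that needs any care is the claim that some row of $A$ involves the first coordinate, and it is settled by the null-space argument above.
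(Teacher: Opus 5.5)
Your proof is correct and is essentially the paper's argument, run directly instead of by contradiction. The paper likewise notes that a zero column $j$ of $A$ would force $\mathbf{e}_j \in \mathrm{null}(A) = \mathrm{span}(\vv)$, which is impossible for a positive $\vv$ with $d \geq 2$, and then applies the triangle inequality to a row with $a_j \neq 0$ (your direct phrasing just gets $2v_1 \leq n$ without the floor arithmetic $2 + 2\lfloor n/2 \rfloor - n \geq 1$).
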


\begin{proof}
    Suppose (for the sake of establishing a contradiction) that there exists an index $j$ for which $v_j \geq \lfloor n/2 \rfloor + 1$. Because $v_1 + v_2 + \cdots + v_d = n$, we have
    \begin{align}\label{ineq:vi_sum}
        \sum_{i \neq j} v_i \leq n - \lfloor n/2 \rfloor - 1.
    \end{align}
    In particular, this implies that for every $\mathbf{a} \in \{-1,0,1\}^d$ with $a_j \neq 0$, we have
    \begin{align*}
        |\mathbf{a} \cdot \mathbf{v}| & = \left|\sum_{i=1}^d a_i v_i\right| \\
        & \geq v_j - \left|\sum_{i \neq j} a_i v_i\right| \\
        & \geq v_j - \sum_{i \neq j} v_i \\
        & \geq 2 + 2\lfloor n/2 \rfloor -  n,
    \end{align*}
    where the first inequality follows from the reverse triangle inequality, the second inequality follows from the standard triangle inequality along with the fact that $v_i \geq 0$ for all $i$, and the third inequality follows from Inequality~\eqref{ineq:vi_sum}.

    Since $2 + 2\lfloor n/2 \rfloor -  n \geq 1$ for all $n \in \Zp$, we conclude that $\mathbf{a} \cdot \mathbf{v} = 0$ is impossible, so the only vectors $\mathbf{a} \in \{-1,0,1\}^d$ for which $\mathbf{a} \cdot \mathbf{v} = 0$ are those with $a_j = 0$. It follows that if $A \in \mathcal{M}_{d-1,d}$ is such that $A\vv = \mathbf{0}$ then the $j$-th column of $A$ consists entirely of zeroes. Since $\mathrm{null}(A)$ is one-dimensional and contains the vector with a $1$ in its $j$-th entry and $0$ elsewhere, $\vv$ must be a multiple of that vector. This contradicts the assumption that all entries of $\vv$ are strictly positive and completes the proof.
\end{proof}

We will also need some results that characterize which vectors with just a few distinct entries are balanced. When there is just one distinct entry (i.e., $\mathbf{v} = c\mathbf{1}_d$ for some $c,d \in \Zp$), $\mathbf{v}$ is always balanced, since the null space of the $(d-1) \times d$ matrix
\begin{align}\label{eq:balanced_A_all_ones}
    \begin{bmatrix}
        1 & 0 & 0 & \cdots & 0 & -1 \\
        0 & 1 & 0 & \cdots & 0 & -1 \\
        0 & 0 & 1 & \cdots & 0 & -1 \\
        \vdots & \vdots & \vdots & \ddots & \vdots & \vdots \\
        0 & 0 & 0 & \cdots & 1 & -1 \\
    \end{bmatrix}
\end{align}
is exactly $\mathrm{span}(\mathbf{1}_d)$. When $\mathbf{v}$ has two distinct entries, it is no longer the case that $\mathbf{v}$ is always balanced, but it is still straightforward to determine whether it is:

\begin{proposition}\label{prop:ab_balanced}
    Suppose $a,b,n_a,n_b \in \Zp$ and let $d = \gcd(a,b)$. Then $(a\mathbf{1}_{n_a},b\mathbf{1}_{n_b})$ is balanced if and only if
    \[
        n_a \geq \frac{b}{d} \quad \text{and} \quad n_b \geq \frac{a}{d}.
    \]
\end{proposition}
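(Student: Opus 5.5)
First reduce to the coprime case. Scaling preserves balancedness (by \cite[Fact~1]{johnston2025laplacian}), so we may replace $(a,b)$ by $(a/d,b/d)$ and assume $\gcd(a,b)=1$; the claim becomes: $\vv=(a\mathbf{1}_{n_a},b\mathbf{1}_{n_b})$ is balanced if and only if $n_a\ge b$ and $n_b\ge a$. The first thing to record is that the $\{-1,0,1\}$-vectors orthogonal to $\vv$ are easy to describe. If $\mathbf{x}\in\{-1,0,1\}^{n_a+n_b}$ has $\mathbf{x}\cdot\vv=0$, let $s$ be the sum of its first $n_a$ entries and $t$ the sum of its last $n_b$ entries. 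Then $as+bt=0$ with $|s|\le n_a$ and $|t|\le n_b$. Coprimality gives $s=kb$ and $t=-ka$ for some integer $k$.

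For necessity, suppose $n_a<b$. Then $|s|\le n_a<b$ forces $k=0$, so every row $\mathbf{x}$ of a candidate matrix $A$ satisfies $s=t=0$. Hence both $(\mathbf{1}_{n_a},\mathbf{0})$ and $(\mathbf{0},\mathbf{1}_{n_b})$ are orthogonal to every row of $A$, and these two vectors are linearly independent. So $\dim\mathrm{null}(A)\ge 2$, and $\vv$ is not balanced. The case $n_b<a$ is symmetric.

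For sufficiency, assume $n_a\ge b$ and $n_b\ge a$, and build $n_a+n_b-1$ rows in $\{-1,0,1\}$ whose null space is exactly $\mathrm{span}(\vv)$. Take the following rows:
\begin{itemize}
\item $n_a-1$ rows $e_i-e_{i+1}$ within the $a$-block, whose common null space forces all $a$-block entries of a null vector to be equal;
\item $n_b-1$ similar rows within the $b$-block, forcing all $b$-block entries to be equal;
\item one mixed row with $b$ entries equal to $1$ in the $a$-block and $a$ entries equal to $-1$ in the $b$-block. This is possible exactly because $n_a\ge b$ and $n_b\ge a$.
\end{itemize}
Each row is orthogonal to $\vv$, since $b\cdot a-a\cdot b=0$ for the mixed row. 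Conversely, a null vector has the form $(x\mathbf{1},y\mathbf{1})$, and the mixed row then gives $bx=ay$, so the vector is a multiple of $\vv$. The row count is $(n_a-1)+(n_b-1)+1=n_a+n_b-1$, as required. Note that $n_a,n_b\ge1$, so none of the block constructions is vacuous in a problematic way; a block of size $1$ simply contributes no rows.

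The main point needing care is the necessity direction, specifically the divisibility step that gives $k=0$. It relies on $\gcd(a,b)=1$ after scaling, which is why the scaling reduction must come first. The case $a=b$ reduces to $a=b=1$, giving conditions $n_a,n_b\ge1$, which always hold; this is consistent with the all-ones case treated earlier.
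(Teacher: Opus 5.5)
Your proof is correct and follows the paper's argument: you use the same reduction to $\gcd(a,b)=1$ and the same divisibility argument, which forces every $\{-1,0,1\}$-row orthogonal to $\vv$ to have zero block sums when $n_a<b$, so the null space is at least $2$-dimensional. The only difference is that for sufficiency you write down the $(n_a+n_b-1)\times(n_a+n_b)$ matrix explicitly, whereas the paper cites \cite[Proposition~3]{johnston2025laplacian} with $\mathbf{x}=(\mathbf{1}_b,\mathbf{0}_{n_a-b})$ and $\mathbf{y}=(\mathbf{1}_a,\mathbf{0}_{n_b-a})$; this is the same construction, made self-contained.
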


\begin{proof}
    Let $\mathbf{v} = (a\mathbf{1}_{n_a},b\mathbf{1}_{n_b})$ and notice that $\mathbf{v}$ is balanced if and only if $\mathbf{v} / d$ is balanced. We can (and will) thus assume without loss of generality that $d = 1$.

    For the ``if'' direction of the proof, recall from \cite[Proposition~3]{johnston2025laplacian} that if there exist vectors $\mathbf{x} \in \{-1,0,1\}^{n_a}$ and $\mathbf{y} \in \{-1,0,1\}^{n_b}$ for which $(a\mathbf{1}_{n_a}) \cdot \mathbf{x} = (b\mathbf{1}_{n_b}) \cdot \mathbf{y} \neq 0$ then $\mathbf{v} = (a\mathbf{1}_{n_a},b\mathbf{1}_{n_b})$ is balanced. Since $n_a \geq b$ and $n_b \geq a$, we can choose $\mathbf{x} = (\mathbf{1}_{b},\mathbf{0}_{n_a-b})$ and $\mathbf{y} = (\mathbf{1}_{a},\mathbf{0}_{n_b-a})$ to get
    \[
        (a\mathbf{1}_{n_a}) \cdot \mathbf{x} = (b\mathbf{1}_{n_b}) \cdot \mathbf{y} = ab \neq 0.
    \]
    It follows that $\mathbf{v}$ is balanced.

    For the ``only if'' direction of the proof, we will prove the contrapositive statement that if $n_a < b$ or $n_b < a$ then $\mathbf{v} = (a\mathbf{1}_{n_a},b\mathbf{1}_{n_b})$ is not balanced. We will assume that $n_a < b$ and simply note that the proof is almost identical if $n_b < a$ instead.
    
    Suppose $A \in \mathcal{M}_{n_a+n_b-1,n_a+n_b}$ has entries from $\{-1,0,1\}$ and is such that $A\mathbf{v} = \mathbf{0}$. Our goal is to show that $\mathrm{null}(A)$ is at least $2$-dimensional, so that $\mathrm{null}(A) \neq \mathrm{span}(\mathbf{v})$. To this end, let $\mathbf{w} \in \{-1,0,1\}^{n_a + n_b}$ be any row of $A$, so that $\mathbf{w} \cdot \mathbf{v} = 0$. If we write $\mathbf{w} = (\mathbf{a},\mathbf{b})$ for some $\mathbf{a} \in \{-1,0,1\}^{n_a}$ and $\mathbf{b} \in \{-1,0,1\}^{n_b}$ then this tells us that
    \begin{align}\label{eq:ab_ip_zero}
        0 = (\mathbf{a},\mathbf{b}) \cdot (a\mathbf{1}_{n_a},b\mathbf{1}_{n_b}) = as_a + bs_b,
    \end{align}
    where $s_a$ is the sum of the entries of $\mathbf{a}$ (notice that $|s_a| \leq n_a$) and $s_b$ is the sum of the entries of $\mathbf{b}$.
    
    Computing Equation~\eqref{eq:ab_ip_zero} modulo $b$ and using the assumption that $\gcd(a,b) = 1$ shows that $s_a$ is an integer multiple of $b$. Since $|s_a| \leq n_a < b$, the only way that $s_a$ can be an integer multiple of $b$ is if $s_a = 0$, which then (via Equation~\eqref{eq:ab_ip_zero}) implies $bs_b = 0$, so $s_b = 0$ too. Since this argument applies to every row of $A$, we see that the rows of $A$ are contained in the $((n_a - 1) + (n_b - 1))$-dimensional subspace
    \[
        \big\{ (\mathbf{a}, \mathbf{b}) : \mathbf{a} \in \R^{n_a}, \mathbf{b} \in \R^{n_b}, \mathbf{a} \cdot \mathbf{1}_{n_a} = \mathbf{b} \cdot \mathbf{1}_{n_b} = 0 \big\}
    \]
    of $\R^{n_a + n_b}$. It follows that $A$ has rank at most $(n_a - 1) + (n_b - 1) = n_a + n_b - 2$, so (by the rank--nullity theorem) $\dim(\mathrm{null}(A)) \geq 2$, which completes the proof.
\end{proof}

When $\mathbf{v}$ has three or more distinct entries, determining balancedness is trickier in general. However, we will only need the special case when those three distinct entries are $3$, $4$, and $6$, in which case the following proposition solves this problem:

\begin{proposition}\label{prop:346_balanced}
    Let $n_3, n_4, n_6 \in \Znn$. Then the vector $(3\mathbf{1}_{n_3},4\mathbf{1}_{n_4},6\mathbf{1}_{n_6})$ is balanced in exactly the following five cases:
    \begin{enumerate}[label = {\alph*)}]
        \item Exactly one of $n_3$, $n_4$, or $n_6$ is non-zero.
        \item $n_3 = 0$, $n_4 \geq 3$, and $n_6 \geq 2$.
        \item $n_3 \geq 2$, $n_4 = 0$, and $n_6 \geq 1$.
        \item $n_3 \geq 4$, $n_4 \geq 3$, and $n_6 = 0$.
        \item $n_3 \geq 2$, $n_4 \geq 3$, and $n_6 \geq 1$.
    \end{enumerate}
\end{proposition}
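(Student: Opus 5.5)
We split into cases according to which of $n_3,n_4,n_6$ are zero. If exactly one is nonzero, the vector is a multiple of $\mathbf{1}$ and is balanced by the matrix~\eqref{eq:balanced_A_all_ones}; this is case~(a). If exactly two are nonzero, we apply Proposition~\ref{prop:ab_balanced}:
\begin{itemize}
    \item For $(4,6)$ we have $\gcd = 2$, so we need $n_4 \geq 3$ and $n_6 \geq 2$, giving case~(b).
    \item For $(3,6)$ we have $\gcd = 3$, so we need $n_3 \geq 2$ and $n_6 \geq 1$, giving case~(c).
    \item For $(3,4)$ we have $\gcd = 1$, so we need $n_3 \geq 4$ and $n_4 \geq 3$, giving case~(d).
\end{itemize}
So the real work is the case where all three counts are positive, and we must show that balancedness is equivalent to $n_3 \geq 2$ and $n_4 \geq 3$.

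\textbf{Sufficiency.} Assume $n_3 \geq 2$, $n_4 \geq 3$, $n_6 \geq 1$. We construct a $\{-1,0,1\}$ matrix with $d-1$ rows, $d = n_3+n_4+n_6$, whose rows are orthogonal to $\mathbf{v}$ and linearly independent. Within each block of equal entries, use differences $e_i - e_j$; this gives $(n_3-1)+(n_4-1)+(n_6-1) = d-3$ rows spanning all vectors supported in a block with zero block sum. Two further rows must link the blocks and have independent block-sum patterns $(s_3,s_4,s_6)$ satisfying $3s_3+4s_4+6s_6 = 0$. Take:
\begin{itemize}
    \item $3+3 = 6$, i.e.\ two entries $+1$ on the $3$-block and $-1$ on one $6$-entry, with block sums $(2,0,-1)$ (uses $n_3 \geq 2$);
    \item $4+4+4 = 6+6$ is unavailable since $n_6$ may be $1$, so instead use $4+4+4 = 3+3+6$, i.e.\ $+1$ on three $4$-entries and $-1$ on two $3$-entries and one $6$-entry, with block sums $(-2,3,-1)$ (uses $n_4 \geq 3$).
\end{itemize}
The pairs $(2,0,-1)$ and $(-2,3,-1)$ are independent. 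Together with the within-block differences, the rank is $d-1$, because the row space contains the zero-block-sum subspace plus two vectors whose block sums are independent. Hence the null space is $\mathrm{span}(\mathbf{v})$.

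\textbf{Necessity.} This is the main obstacle. We imitate the proof of Proposition~\ref{prop:ab_balanced}: any row $\mathbf{w}$ with $\mathbf{w}\cdot\mathbf{v} = 0$ gives block sums with $3s_3 + 4s_4 + 6s_6 = 0$, where $|s_i| \leq n_i$. The map sending a row to $(s_3,s_4,s_6)$ has kernel of dimension $d-3$ on $\R^d$, so the rank of $A$ is at most $d-3+r$, where $r$ is the dimension of the span of achievable triples. Balancedness requires $r = 2$, i.e.\ two independent integer solutions within the bounds.

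\begin{itemize}
    \item Suppose $n_4 \leq 2$. Reducing mod $3$ gives $s_4 \equiv 0 \pmod 3$, so $s_4 = 0$. Then $3s_3 + 6s_6 = 0$, so all triples lie on the line $s_4 = 0$, $s_3 = -2s_6$. Hence $r \leq 1$.
    \item Suppose $n_3 = 1$. Reducing mod $2$ gives $s_3$ even, so $s_3 = 0$. Then $4s_4 + 6s_6 = 0$, a single line again, so $r \leq 1$.
\end{itemize}
In both subcases the null space has dimension at least $2$, so $\mathbf{v}$ is not balanced. This establishes that case~(e) is exactly the balanced case when all three counts are positive.
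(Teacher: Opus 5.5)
Your proof is correct and follows the paper's argument essentially step for step: the two-value cases via Proposition~\ref{prop:ab_balanced}, the same two linking rows for sufficiency (your second row is the negative of the paper's), and the same mod-$2$/mod-$3$ reduction forcing one block sum to vanish for necessity. Your rank bound via the block-sum map is, if anything, more precise than the paper's. The paper describes the relevant subspaces with conditions such as $\mathbf{b}\cdot\mathbf{1}+\mathbf{c}\cdot\mathbf{1}=0$, which should really read $2\,\mathbf{b}\cdot\mathbf{1}+3\,\mathbf{c}\cdot\mathbf{1}=0$, although the dimension count comes out the same.
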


\begin{proof}
    The first four cases of the proposition follow immediately from Proposition~\ref{prop:ab_balanced}. All that remains is to prove that if $n_3, n_4, n_6 \geq 1$ then $\vv = (3\mathbf{1}_{n_3},4\mathbf{1}_{n_4},6\mathbf{1}_{n_6})$ is balanced if and only if $n_3 \geq 2$ and $n_4 \geq 3$.

    In one direction, suppose that $n_3 \geq 2$ and $n_4 \geq 3$. For $d \in \Zp$, let $B_d$ be the $(d-1) \times d$ matrix~\eqref{eq:balanced_A_all_ones}. Define the following three additional matrices:
    \begin{itemize}
        \item $C \in \mathcal{M}_{2,n_3}$ has $c_{i,j} = 1$ for all $i$ and for $1 \leq j \leq 2$ (which is possible since $n_3 \geq 2$) and $c_{i,j} = 0$ otherwise,
        
        \item $D \in \mathcal{M}_{2,n_4}$ has $d_{1,j} = 0$ for all $j$, $d_{2,j} = -1$ for $1 \leq j \leq 3$ (which is possible since $n_4 \geq 3$), and $d_{i,j} = 0$ otherwise, and
        
        \item $E \in \mathcal{M}_{2,n_6}$ has $e_{1,1} = -1$, $e_{2,1} = 1$, and $e_{i,j} = 0$ otherwise.
    \end{itemize}
    Then straightforward calculation shows that
    \[
        A = \begin{bmatrix}
            B_{n_3} & O & O \\
            O & B_{n_4} & O \\
            O & O & B_{n_6} \\
            C & D & E
        \end{bmatrix}
    \]
    is a $(n_3 + n_4 + n_6 - 1) \times (n_3 + n_4 + n_6)$ matrix with entries from $\{-1,0,1\}$ that has linearly independent rows and $A\vv = \mathbf{0}$. It follows that $\mathrm{null}(A) = \mathrm{span}(\vv)$, so $\vv$ is balanced.

    In the other direction, suppose that $\mathbf{v}$ is balanced and $\mathbf{w}$ is a row of an $(n_3 + n_4 + n_6 - 1) \times (n_3 + n_4 + n_6)$ matrix $A$ with entries from $\{-1,0,1\}$ such that $\mathrm{null}(A) = \mathrm{span}(\vv)$ (so that $\mathbf{w} \cdot \mathbf{v} = 0$). If we write $\mathbf{w} = (\mathbf{a}, \mathbf{b}, \mathbf{c})$ for some $\mathbf{a} \in \{-1,0,1\}^{n_3}$, $\mathbf{b} \in \{-1,0,1\}^{n_4}$, and $\mathbf{c} \in \{-1,0,1\}^{n_6}$, then we have 
    \begin{align}\label{eq:abc_346_sum}
        0 = (\mathbf{a}, \mathbf{b}, \mathbf{c}) \cdot (3\mathbf{1}_{n_3},4\mathbf{1}_{n_4},6\mathbf{1}_{n_6}) = 3s_a + 4s_b + 6s_c,
    \end{align}
    where $s_a$ is the sum of the entries of $\mathbf{a}$ (and similarly for $s_b$ and $s_c$). Notice that $|s_a| \leq n_3$ and $|s_b| \leq n_4$.

    Now assume (for the sake of establishing a contradiction) that $n_3 = 1$ (so $|s_a| \leq 1$). Reducing Equation~\eqref{eq:abc_346_sum} modulo $2$ shows that $s_a$ is even, so $s_a \notin \{-1,1\}$, which forces $s_a = 0$. Plugging this back into Equation~\eqref{eq:abc_346_sum} shows that $4s_b + 6s_c = 0$ as well, so the rows of $A$ are contained in the $((n_3 - 1) + (n_4 + n_6 - 1))$-dimensional subspace
    \[
        \big\{ (\mathbf{a}, \mathbf{b}, \mathbf{c}) : \mathbf{a} \in \R^{n_3}, \mathbf{b} \in \R^{n_4}, \mathbf{c} \in \R^{n_6}, \mathbf{a} \cdot \mathbf{1}_{n_3} = \mathbf{b} \cdot \mathbf{1}_{n_4} + \mathbf{c} \cdot \mathbf{1}_{n_6} = 0 \big\}
    \]
    of $\R^{n_3 + n_4 + n_6}$. It follows that $A$ has rank at most $((n_3 - 1) + (n_4 + n_6 - 1)) = n_3 + n_4 + n_6 - 2$, so (by the rank--nullity theorem) $\dim(\mathrm{null}(A)) \geq 2$, which contradicts the fact that $\vv$ is balanced. This contradiction shows that $n_3 \geq 2$.

    Similarly, assume (for the sake of establishing a contradiction) that $n_4 \in \{1,2\}$ (so $|s_b| \leq 2$). Reducing Equation~\eqref{eq:abc_346_sum} modulo $3$ shows that $s_b$ is an integer multiple of $3$, so $s_b \notin \{-2,-1,1,2\}$, which forces $s_b = 0$. Plugging this back into Equation~\eqref{eq:abc_346_sum} shows that $3s_a + 6s_c = 0$ as well, so the rows of $A$ are contained in the $((n_4 - 1) + (n_3 + n_6 - 1))$-dimensional subspace
    \[
        \big\{ (\mathbf{a}, \mathbf{b}, \mathbf{c}) : \mathbf{a} \in \R^{n_3}, \mathbf{b} \in \R^{n_4}, \mathbf{c} \in \R^{n_6}, \mathbf{b} \cdot \mathbf{1}_{n_4} = \mathbf{a} \cdot \mathbf{1}_{n_3} + \mathbf{c} \cdot \mathbf{1}_{n_6} = 0 \big\}
    \]
    of $\R^{n_3 + n_4 + n_6}$. It follows that $A$ has rank at most $((n_4 - 1) + (n_3 + n_6 - 1)) = n_3 + n_4 + n_6 - 2$, so (by the rank--nullity theorem) $\dim(\mathrm{null}(A)) \geq 2$, which contradicts the fact that $\vv$ is balanced. This contradiction shows that $n_4 \geq 3$, which completes the proof.
\end{proof}

\section{Prime order Laplacian integral and $\{-1,0,1\}$-diagonalizable graphs}\label{sec:prime}

It is straightforward to show that if $G_1$, $G_2$, $\ldots$, $G_d$ are connected and we define
\begin{align}\label{eq:build_larger_G}
    G := (G_1 \sqcup G_2 \sqcup \cdots \sqcup G_d)^\textup{c}
\end{align}
then $G$ is connected too. Furthermore, $G$ is Laplacian integral if and only if $G_1$, $G_2$, $\ldots$, $G_d$ are Laplacian integral. Similarly, recall from \cite[Corollary~2]{johnston2025laplacian} that if $G_1$, $G_2$, $\ldots$, $G_d$ are connected $\{-1,0,1\}$-diagonalizable graphs on $v_1$, $v_2$, $\ldots$, $v_d$ vertices, respectively, then the graph $G$ defined by Equation~\eqref{eq:build_larger_G} is $\{-1,0,1\}$-diagonalizable if and only if $\vv := (v_1, v_2, \ldots, v_d)$ is balanced.

Our main result of this section shows that this method of constructing larger connected Laplacian integral or $\{-1,0,1\}$-diagonalizable graphs from smaller ones gives \emph{all} such graphs of prime order:

\begin{theorem}\label{thm:prime_main}
    Let $n$ be prime and let $G$ be a connected $n$-vertex graph. Then $G$ is Laplacian integral if and only if there exist connected Laplacian integral graphs $G_1$, $G_2$, $\ldots$, $G_d$ ($d \geq 2$) such that
    \begin{align}\label{eq:build_larger_G_2}
        G = (G_1 \sqcup G_2 \sqcup \cdots \sqcup G_d)^\textup{c}.
    \end{align}
    Furthermore, $G$ is $\{-1,0,1\}$-diagonalizable if and only if $G_1$, $G_2$, $\ldots$, $G_d$ are $\{-1,0,1\}$-diagonalizable and $(|V(G_1)|, |V(G_2)|, \ldots, |V(G_d)|)$ is balanced.
\end{theorem}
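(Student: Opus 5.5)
The plan is to show that for prime $n$, every connected Laplacian integral $G$ has a disconnected complement. Both parts of the theorem then reduce to facts recalled just before the statement.

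\textbf{Step 1 (eigenvalue $n$ via divisibility).} Let $G$ be connected and Laplacian integral on $n$ vertices, $n$ prime. Its Laplacian eigenvalues are $0=\lambda_1<\lambda_2\le\cdots\le\lambda_n$, and every Laplacian eigenvalue of a simple $n$-vertex graph lies in $[0,n]$. Hence $\lambda_2,\ldots,\lambda_n\in\{1,2,\ldots,n\}$, with $\lambda_2\geq 1$ because $G$ is connected. By the matrix-tree theorem, the number of spanning trees is
\[
\tau(G)=\frac{1}{n}\prod_{j=2}^n \lambda_j ,
\]
which is a positive integer, so $n$ divides $\prod_{j\ge2}\lambda_j$. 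Since $n$ is prime, some $\lambda_j$ must be divisible by $n$. Because $1\le\lambda_j\le n$, this forces $\lambda_j=n$, so $n$ is a Laplacian eigenvalue of $G$. I expect this divisibility argument to be the main (and essentially only) nontrivial idea.

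\textbf{Step 2 (complement is disconnected).} We use the identity $L(G)+L(G^{\textup{c}})=nI-J$. Eigenvectors of $L(G)$ orthogonal to $\mathbf{1}$ with eigenvalue $\mu$ are eigenvectors of $L(G^{\textup{c}})$ with eigenvalue $n-\mu$. So an eigenvalue $n$ of $L(G)$ gives an eigenvalue $0$ of $L(G^{\textup{c}})$ on a vector orthogonal to $\mathbf{1}$. Together with $\mathbf{1}$ itself, the eigenvalue $0$ then has multiplicity at least $2$, so $G^{\textup{c}}$ is disconnected. Write $G^{\textup{c}}=G_1\sqcup\cdots\sqcup G_d$ with the $G_i$ its connected components, $d\ge2$; then \eqref{eq:build_larger_G_2} holds. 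The same identity shows that the spectrum of $G^{\textup{c}}$ consists of $0$ and the values $n-\lambda_j$ for $j\geq 2$, all integers. Since the Laplacian spectrum of a disjoint union is the union of the component spectra, each $G_i$ is Laplacian integral. The converse direction is exactly the remark preceding the theorem.

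\textbf{Step 3 ($\{-1,0,1\}$-diagonalizable case).} If $G$ is $\{-1,0,1\}$-diagonalizable, it is Laplacian integral, so Steps 1--2 give the decomposition into connected components $G_i$. By \cite[Corollary~2]{johnston2025laplacian}, $G$ is then $\{-1,0,1\}$-diagonalizable if and only if each $G_i$ is and $(|V(G_1)|,\ldots,|V(G_d)|)$ is balanced. I read that corollary as including the necessity of the components being diagonalizable; if it only covers the converse, one restricts eigenvectors to components as in that paper. Conversely, if connected $\{-1,0,1\}$-diagonalizable $G_i$ with balanced order vector are given, the same corollary shows that $G$ is $\{-1,0,1\}$-diagonalizable.
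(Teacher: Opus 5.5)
Your proposal is correct and follows the paper's proof essentially step for step: the matrix--tree divisibility argument forces $\lambda_n = n$, the identity $L(G^{\textup{c}}) = nI - J - L(G)$ then makes $G^{\textup{c}}$ disconnected with integral component spectra, and the $\{-1,0,1\}$ part is reduced to \cite[Corollary~2]{johnston2025laplacian}. One point to tighten: as recalled in the paper, that corollary already assumes the $G_j$ are $\{-1,0,1\}$-diagonalizable, so your fallback is genuinely needed, and the paper supplies it by noting that $G^{\textup{c}}$ is $\{-1,0,1\}$-diagonalizable (\cite[Proposition~1]{johnston2025laplacian}, or directly from your Step~2 identity, since the same eigenbasis works) and then restricting the diagonalizing matrix to the rows indexed by each component of the block-diagonal $L(G^{\textup{c}})$, whose columns are then eigenvectors of $L(G_j)$ or zero and span the whole space.
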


For example, it was shown in \cite[Table~3]{johnston2025laplacian} that the only connected $\{-1,0,1\}$-diagonalizable graphs of order $7$ are $K_7$, $K_{2,1,1,1,1,1}$, $K_{2,2,1,1,1}$, $K_{3,1,1,1,1}$, and $K_{3,2,1,1}$. Theorem~\ref{thm:prime_main} makes this fact straightforward to see with very little computation: the only balanced vectors $\vv \in \Zp^d$ ($d \geq 2$) with sum equal to $7$, up to permutation of their entries, are $(1,1,1,1,1,1,1)$, $(2,1,1,1,1,1)$, $(2,2,1,1,1)$, $(3,1,1,1,1)$, and $(3,2,1,1)$ (see Table~\ref{tab:balanced_smallsum}), and the only $n$-vertex connected $\{-1,0,1\}$-diagonalizable graphs for $n \leq 3$ are $K_n$, so the only connected $7$-vertex $\{-1,0,1\}$-diagonalizable graphs must be
\begin{align*}
    K_7 & = (K_1 \sqcup K_1 \sqcup K_1 \sqcup K_1 \sqcup K_1 \sqcup K_1 \sqcup K_1)^\textup{c}, \\
    K_{2,1,1,1,1,1} & = (K_2 \sqcup K_1 \sqcup K_1 \sqcup K_1 \sqcup K_1 \sqcup K_1)^\textup{c}, \\
    K_{2,2,1,1,1} & = (K_2 \sqcup K_2 \sqcup K_1 \sqcup K_1 \sqcup K_1)^\textup{c}, \\
    K_{3,1,1,1,1} & = (K_3 \sqcup K_1 \sqcup K_1 \sqcup K_1 \sqcup K_1)^\textup{c}, \ \text{and} \\
    K_{3,2,1,1} & = (K_3 \sqcup K_2 \sqcup K_1 \sqcup K_1)^\textup{c}.
\end{align*}
However, for non-prime orders, there are connected Laplacian integral and $\{-1,0,1\}$-diagonalizable graphs that are not of this form, such as $C_6$ and $K_3 \mathbin{\square} K_3$.

Before proving Theorem~\ref{thm:prime_main}, we require the following lemma:

\begin{lemma}\label{lem:int_eig_implies_Gc_disc}
    Let $n$ be prime and let $G$ be a connected $n$-vertex graph. If $G$ is Laplacian integral, then $G^\textup{c}$ is disconnected.
\end{lemma}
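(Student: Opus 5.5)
We will use the matrix-tree theorem together with the Laplacian spectrum of the complement. Let $0 = \mu_1 \leq \mu_2 \leq \cdots \leq \mu_n$ be the Laplacian eigenvalues of $G$. These are all integers, and $\mu_n \leq n$. Recall the standard fact that the Laplacian eigenvalues of $G^\textup{c}$ are $0$ together with $n - \mu_j$ for $2 \leq j \leq n$. So $G^\textup{c}$ is disconnected if and only if its algebraic connectivity is $0$, i.e., if and only if $\mu_n = n$. It therefore suffices to show that $\mu_n = n$.

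Suppose, for contradiction, that $\mu_n \leq n-1$. Since $G$ is connected, $\mu_2 \geq 1$. Thus every nonzero eigenvalue $\mu_2, \ldots, \mu_n$ is an integer in $\{1, 2, \ldots, n-1\}$. By the matrix-tree theorem, the number of spanning trees is
\[
    \tau(G) = \frac{1}{n}\prod_{j=2}^n \mu_j.
\]
This must be a positive integer, so $n$ divides $\prod_{j=2}^n \mu_j$. Because $n$ is prime, $n$ must divide some $\mu_j$ with $2 \leq j \leq n$. But each such $\mu_j$ satisfies $1 \leq \mu_j \leq n-1$, so none is divisible by $n$, a contradiction. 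Hence $\mu_n = n$, and $G^\textup{c}$ is disconnected.

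The main point to verify is the use of the two standard facts: the complement's spectrum relation (from $L(G) + L(G^\textup{c}) = nI - J$ with a common eigenbasis orthogonal to $\mathbf{1}$), and the bound $\mu_n \leq n$ (which also follows from that relation, since $L(G^\textup{c})$ is positive semidefinite). Both are classical, so I expect no real obstacle. The only delicate step is recognizing that primality enters exactly through the divisibility $n \mid \prod_{j \geq 2} \mu_j$.
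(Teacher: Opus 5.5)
Your proposal is correct and follows essentially the same route as the paper. Both use the matrix--tree theorem and primality of $n$ to force $\lambda_n = n$, then use $L(G^\textup{c}) = nI - J - L(G)$ to conclude that the kernel of $L(G^\textup{c})$ is at least two-dimensional. The only differences are that you argue by contradiction and quote the complement-spectrum fact, while the paper directly checks that the $n$-eigenvector lies in the kernel of $L(G^\textup{c})$.
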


\begin{proof}
    Let the eigenvalues of $L(G)$ be $0 = \lambda_1 < \lambda_2 \leq \cdots \leq \lambda_n$, where the strict inequality follows from connectivity of $G$. By the matrix--tree theorem, if $\tau(G)$ denotes the number of spanning trees of $G$, then
    \[
        \prod_{j=2}^n \lambda_j = n\tau(G).
    \]
    In particular, this implies that at least one of $\lambda_2, \lambda_3, \ldots, \lambda_n$ must be divisible by $n$. Since $0 < \lambda_j \leq n$ for each $j \geq 2$, we conclude that $\lambda_n = n$.
    
    Recall that $L(G^\textup{c}) = nI - J - L(G)$. Since $G$ is connected, every eigenvector of $L(G)$ corresponding to a non-zero eigenvalue is orthogonal to $\mathbf{1}$. If $\mathbf{v}$ is an eigenvector corresponding to $\lambda_n = n$, then we see that
    \[
        L(G^\textup{c})\mathbf{v} = (nI - J - L(G))\mathbf{v} = n\mathbf{v} - \mathbf{0} - n\mathbf{v} = \mathbf{0}.
    \]
    It follows that the eigenspace of $L(G^\textup{c})$ corresponding to the eigenvalue $0$ is at least two dimensional, since it contains the orthogonal vectors $\mathbf{1}$ and $\mathbf{v}$. This implies that $G^\textup{c}$ is disconnected and completes the proof.
\end{proof}

We note that even though we are not aware of Theorem~\ref{thm:prime_main} appearing previously, the same method of proof of Lemma~\ref{lem:int_eig_implies_Gc_disc} has been used to prove properties of prime-order Laplacian integral graphs before \cite{fallat2005graphs}.

\begin{proof}[Proof of Theorem~\ref{thm:prime_main}]
    We already explained why the ``if'' direction holds at the start of this section (for all $n$, not just prime $n$). For the ``only if'' direction, we note that $G^\textup{c}$ is disconnected by Lemma~\ref{lem:int_eig_implies_Gc_disc}. If we refer to the connected components of $G^\textup{c}$ as $G_1$, $G_2$, $\ldots$, $G_d$ ($d \geq 2$), then $G = (G_1 \sqcup G_2 \sqcup \cdots \sqcup G_d)^\textup{c}$. The fact that each $G_j$ is Laplacian integral follows from the eigenvalues of $L(G)$ being $0$, $n$, and the numbers of the form $n - \lambda$, where $\lambda$ is a non-zero eigenvalue of one of the $G_j$'s.
    
    For the ``furthermore'' remark about $\{-1,0,1\}$-diagonalizable graphs, recall from \cite[Proposition~1]{johnston2025laplacian} that if $G$ is $\{-1,0,1\}$-diagonalizable, then so is $G^\textup{c} = G_1 \sqcup G_2 \sqcup \cdots \sqcup G_d$. Note that $L(G^\textup{c})$ is block diagonal, with $L(G_1)$, $L(G_2)$, $\ldots$, $L(G_d)$ as its diagonal blocks. If $P$ is an invertible matrix with all entries from $\{-1,0,1\}$ and $P^{-1} L(G^\textup{c}) P$ is diagonal, then (for each $1 \leq j \leq d$), taking the $j$-th diagonal block of $P$ gives a diagonalization that shows that $G_j$ is also $\{-1,0,1\}$-diagonalizable. The fact that $(v_1, v_2, \ldots, v_d)$ must be balanced follows from the ``only if'' direction of \cite[Corollary~2]{johnston2025laplacian}, which completes the proof.
\end{proof}

In the Laplacian integral case, Theorem~\ref{thm:prime_main} shows that complementation is a bijection between the sets of connected $n$-vertex Laplacian integral graphs and disconnected $n$-vertex Laplacian integral graphs (as long as $n$ is prime). This immediately proves the following result:

\begin{corollary}\label{cor:prime_count_lap_int_half}
    If $n$ is prime then exactly half of all $n$-vertex Laplacian integral graphs are connected.
\end{corollary}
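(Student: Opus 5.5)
The plan is to exhibit complementation as a bijection between connected and disconnected $n$-vertex Laplacian integral graphs when $n$ is prime; the two classes then have equal size, each half of the total. Throughout, graphs are counted up to isomorphism, and complementation respects isomorphism and is an involution, so it suffices to show that it maps each class into the other.

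\emph{Step 1: complementation preserves Laplacian integrality for every $n$.} Since $L(G^\textup{c}) = nI - J - L(G)$ and $L(G)$ commutes with $J$ (because $L(G)\mathbf{1} = \mathbf{0}$), we can take an orthonormal eigenbasis of $L(G)$ that contains $\mathbf{1}/\sqrt{n}$. On the orthogonal complement of $\mathbf{1}$, the matrix $J$ acts as zero, so the spectrum of $L(G^\textup{c})$ is $0$ together with $n - \lambda$, where $\lambda$ ranges over the remaining $n-1$ eigenvalues of $L(G)$. Hence $G$ is Laplacian integral if and only if $G^\textup{c}$ is.

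\emph{Step 2: connected goes to disconnected.} If $G$ is connected and Laplacian integral of prime order $n$, then Lemma~\ref{lem:int_eig_implies_Gc_disc} says $G^\textup{c}$ is disconnected, and by Step~1 it is Laplacian integral.

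\emph{Step 3: disconnected goes to connected.} This holds for all $n$ and is standard: if $G$ is disconnected, any two vertices in different components are adjacent in $G^\textup{c}$, and any two vertices in the same component have a common neighbour in $G^\textup{c}$ (any vertex of another component). So $G^\textup{c}$ is connected, and it is Laplacian integral by Step~1.

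Combining the steps, complementation is an involution on the set of $n$-vertex Laplacian integral graphs that exchanges the connected ones with the disconnected ones. Therefore $cl(n) = l(n) - cl(n)$, that is, $cl(n) = l(n)/2$. The only nontrivial ingredient is Lemma~\ref{lem:int_eig_implies_Gc_disc}, which the paper has already proved, so I expect no real obstacle beyond the bookkeeping in Step~1.
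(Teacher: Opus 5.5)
Your proposal is correct and follows the paper's argument. The paper also obtains this corollary by noting that complementation is a bijection between connected and disconnected $n$-vertex Laplacian integral graphs for prime $n$. It gets this from Theorem~\ref{thm:prime_main}, which rests on Lemma~\ref{lem:int_eig_implies_Gc_disc} and on the spectral relation $L(G^\textup{c}) = nI - J - L(G)$, the same ingredients as your Steps~1--3.
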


The corresponding statement for $\{-1,0,1\}$-diagonalizable graphs is not true. For example, $C_3 \sqcup C_4$ is $\{-1,0,1\}$-diagonalizable, but its complement is not (see \cite[Theorem~4]{johnston2025laplacian}).

\subsection{Enumerating Laplacian integral graphs}\label{sec:counting_lap}

Theorem~\ref{thm:prime_main} can be used to catalogue and count all Laplacian integral or $\{-1,0,1\}$-diagonalizable graphs of prime order, as long as all such graphs of smaller order are known. We first illustrate this procedure for connected Laplacian integral graphs, before presenting the general counting result.

\begin{example}\label{ex:small_counting}
    To count the number of connected Laplacian integral graphs on $n = 7$ vertices once we have the number of such graphs on fewer vertices, we note from Theorem~\ref{thm:prime_main} that (since $7$ is prime) the connected Laplacian integral graphs on $7$ vertices are exactly the graphs $G$ of the form
    \[
        G = (G_1 \sqcup G_2 \sqcup \cdots \sqcup G_d)^\textup{c},
    \]
    where $G_1$, $G_2$, $\ldots$, $G_d$ are connected Laplacian integral graphs on $6$ or fewer vertices, and $|V(G_1)| + |V(G_2)| + \cdots + |V(G_d)| = 7$. This final condition means exactly that the multiset
    \[
        \big\{|V(G_1)|, |V(G_2)|, \ldots, |V(G_d)|\big\}
    \]
    is an integer partition of $7$.
    
    To catalogue all connected Laplacian integral graphs on $7$ vertices, it thus suffices to (a) compute all integer partitions of $7$ with more than one part (i.e., with all parts being $6$ or less), and (b) associate each partition $p = \{p_1, p_2, \ldots, p_d\}$ with the set of connected Laplacian integral graphs of the form $(G_1 \sqcup G_2 \sqcup \cdots \sqcup G_d)^\textup{c}$, where $|V(G_j)| = p_j$ for all $j \in \{1,2,\ldots,d\}$.
    
    For step (a), we note that there are $14$ integer partitions of $7$ with more than one part \cite{oeisA000065}:
    \begin{align*}
        \{6,1\}, & & \{4,3\}, & & \{3,3,1\}, & & \{3,1,1,1,1\}, & & \{2,1,1,1,1,1\}, \\
        \{5,2\}, & & \{4,2,1\}, & & \{3,2,2\}, & & \{2,2,2,1\}, & & \{1,1,1,1,1,1,1\}, \\
        \{5,1,1\}, & & \{4,1,1,1\}, & & \{3,2,1,1\}, & & \{2,2,1,1,1\}. & &
    \end{align*}

    For step (b), we recall \cite{oeisA363064} that there are $1$, $1$, $2$, $5$, $12$, and $37$ connected Laplacian integral graphs on $1$, $2$, $3$, $4$, $5$, and $6$ vertices, respectively. This tells us the following:
    \begin{itemize}
        \item The partition $\{6,1\}$ gives $37$ connected Laplacian integral graphs on $7$ vertices: the graphs of the form $(G_1 \sqcup G_2)^\textup{c}$, where $G_1$ is one of the $37$ $6$-vertex graphs and $G_2$ is the unique $1$-vertex graph.
        
        \item The partition $\{4,3\}$ gives $5 \cdot 2 = 10$ connected Laplacian integral graphs on $7$ vertices: the graphs of the form $(G_1 \sqcup G_2)^\textup{c}$, where $G_1$ is one of the $5$ $4$-vertex graphs and $G_2$ is one of the $2$ $3$-vertex graphs.
        
        \item The other integer partitions are handled similarly, with one wrinkle if there are multiple parts of the partition of the same size. For the partition $\{3,3,1\}$, for example, the associated graphs all have the form $(G_1 \sqcup G_2 \sqcup G_3)^\textup{c}$, where $G_3 = K_1$ is the unique connected Laplacian integral graph on $1$ vertex, and $G_1,G_2 \in \{K_3,K_{2,1}\}$ are each one of the two connected Laplacian integral graphs on $3$ vertices. However, there are only three $7$-vertex graphs of this form, not $2 \cdot 2 = 4$ as we might first expect, since $(K_3 \sqcup K_{2,1} \sqcup K_1)^\textup{c}$ and $(K_{2,1} \sqcup K_3 \sqcup K_1)^\textup{c}$ are isomorphic.
    \end{itemize}
    Once we sum these counts over each of the $14$ integer partitions of $7$, we see that there must be
    \[
        37 + 10 + 3 + 2 + 1 + 12 + 5 + 2 + 1 + 1 + 12 + 5 + 2 + 1 = 94
    \]
    connected Laplacian integral graphs on $7$ vertices (which matches the known count from \cite{oeisA363064}).
\end{example}

This method of enumerating graphs works whenever the graphs of interest have the form $(G_1 \sqcup G_2 \sqcup \cdots \sqcup G_d)^\textup{c}$ (or, since complementation doesn't change the count, $G_1 \sqcup G_2 \sqcup \cdots \sqcup G_d$), as long as we know all valid multisets $\{|V(G_1)|, |V(G_2)|, \ldots, |V(G_d)|\}$ and all valid $G_j$'s. Recall that $P_n$ denotes the set of integer partitions of $n$ and $m_p(k)$ denotes the multiplicity of $k$ in the multiset $p \in P_n$ (i.e., the number of occurrences of $k$ in $p$). Then we have the following result:

\begin{theorem}\label{thm:counting_graphs}
    Let $n$ be a positive integer, let $S_n \subseteq P_n$, and let $\mathcal{C}_1$, $\mathcal{C}_2$, $\mathcal{C}_3$, $\ldots$ be sets of connected graphs on $1$, $2$, $3$, $\ldots$ vertices, respectively. Let $\mathcal{G}_n$ be the set of $n$-vertex graphs $G$ that have the following properties, where we denote the connected components of $G$ by $G_1$, $G_2$, $\ldots$, $G_d$:
    \begin{itemize}
        \item $G_k \in \mathcal{C}_{|V(G_k)|}$ for all $k \in \{1,2,\ldots,d\}$, and 
        \item $\{|V(G_1)|, |V(G_2)|, \ldots, |V(G_d)|\} \in S_n$.
    \end{itemize}
    Then
    \begin{align}\label{eq:Gn_size_formula}
        |\mathcal{G}_n| = \sum_{p \in S_n} \prod_{k = 1}^n \binom{|\mathcal{C}_k|+m_{p}(k)-1}{m_{p}(k)}.
    \end{align}
\end{theorem}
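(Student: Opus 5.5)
The proof partitions $\mathcal{G}_n$ according to the multiset of component orders and then counts each part with multisets. Every $n$-vertex graph decomposes uniquely, up to isomorphism and reordering, into connected components. So $G \mapsto \{|V(G_1)|,\ldots,|V(G_d)|\}$ is a well-defined map from isomorphism classes of $n$-vertex graphs to $P_n$. Restricting this map to $\mathcal{G}_n$ gives
\[
|\mathcal{G}_n| = \sum_{p\in S_n} |\mathcal{G}_{n,p}|,
\]
where $\mathcal{G}_{n,p}$ is the set of graphs in $\mathcal{G}_n$ whose component-order multiset is $p$. Throughout we count graphs up to isomorphism and regard each $\mathcal{C}_k$ as a set of pairwise non-isomorphic connected graphs, as in Example~\ref{ex:small_counting}.

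Fix $p\in S_n$. A graph in $\mathcal{G}_{n,p}$ is determined, up to isomorphism, by choosing for each $k$ a multiset of exactly $m_p(k)$ graphs from $\mathcal{C}_k$; these are its $k$-vertex components. The key step is that this correspondence is a bijection between $\mathcal{G}_{n,p}$ and the product over $k$ of the sets of size-$m_p(k)$ multisets drawn from $\mathcal{C}_k$.

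Surjectivity is immediate: any such choice of multisets yields a disjoint union lying in $\mathcal{G}_{n,p}$, because the components of a disjoint union of connected graphs are exactly those graphs. Injectivity uses the fact that two graphs are isomorphic if and only if there is a bijection between their components matching components to isomorphic components. An isomorphism maps connected components to connected components and preserves their orders, so it induces, for each $k$, a bijection of the $k$-vertex components. Hence isomorphic graphs give identical multisets. Conversely, equal multisets let us assemble an isomorphism componentwise.

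Finally, the number of multisets of size $m$ drawn from a set of size $c$ is $\binom{c+m-1}{m}$, by the standard stars-and-bars count. Multiplying over $k=1,\ldots,n$ gives $|\mathcal{G}_{n,p}|$. For $k$ with $m_p(k)=0$ the factor is $1$, which covers all parts not appearing in $p$. Summing over $p\in S_n$ yields \eqref{eq:Gn_size_formula}.

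No step is genuinely hard. The only point needing care is the injectivity argument, namely that isomorphism of graphs is equivalent to an order-preserving matching of components up to isomorphism. This is what justifies counting multisets rather than ordered tuples, and it is exactly the wrinkle with $\{3,3,1\}$ in Example~\ref{ex:small_counting}.
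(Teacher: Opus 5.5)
Your proof is correct and is essentially the paper's argument. Both split $\mathcal{G}_n$ by the component-order multiset $p \in S_n$, identify the $k$-vertex components with a size-$m_p(k)$ multiset from $\mathcal{C}_k$ counted by $\binom{|\mathcal{C}_k|+m_p(k)-1}{m_p(k)}$, then multiply over $k$ and sum over $p$ (the paper phrases the middle step as counting non-decreasing index tuples $\ell_1 \leq \cdots \leq \ell_{m_p(k)}$). Your explicit bijection between isomorphism classes and these multisets, with each $\mathcal{C}_k$ taken as pairwise non-isomorphic graphs, spells out a point the paper leaves implicit; when $|\mathcal{C}_k| = 0$ the binomial must be read with the conventions $\binom{m-1}{m} = 0$ for $m \geq 1$ and $\binom{-1}{0} = 1$.
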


\begin{proof}
    Since each $p = \{p_1,p_2,\ldots,p_{|p|}\} \in S_n$ can be used to construct the graphs $G = G_1 \sqcup G_2 \sqcup \cdots \sqcup G_{|p|} \in \mathcal{G}_n$, where $G_k \in \mathcal{C}_{p_k}$ for all $k \in \{1,2,\ldots,|p|\}$, it suffices to count how many such graphs there are for each $p$ and then sum over $p \in S_n$.

    Fix $k \in \{1,2,\ldots,n\}$ and let $j_1$, $j_2$, $\ldots$, $j_{m_p(k)}$ be the indices $j$ for which $p_j = k$. The number of ways of choosing $G_{j_1}, G_{j_2}, \ldots, G_{j_{m_p(k)}} \in \mathcal{C}_k$, up to isomorphism of $G_{j_1} \sqcup G_{j_2} \sqcup \cdots \sqcup G_{j_{m_p(k)}}$, is
    \begin{align}\label{eq:binom_formula}
        \binom{|\mathcal{C}_k|+m_{p}(k)-1}{m_{p}(k)}.
    \end{align}
    To see this, notice that it is equal to the number of tuples $(\ell_1,\ell_2,\ldots,\ell_{m_p(k)}) \in \{1,2,\ldots,|\mathcal{C}_k|\}^{m_p(k)}$ (with $\ell_i$ indexing which member of $\mathcal{C}_k$ is used for $G_{j_i}$) that satisfy $\ell_1 \leq \ell_2 \leq \cdots \leq \ell_{m_p(k)}$; the binomial coefficient~\eqref{eq:binom_formula} is well-known to count such tuples (e.g., this quantity is denoted by $\big(\binom{|\mathcal{C}_k|}{m_p(k)}\big)$ in \cite[Section~1.2]{stanley2011enumerative}).

    For a given $p \in S_n$, it follows that multiplying the binomial coefficient~\eqref{eq:binom_formula} for $k = 1, 2, \ldots n$ counts the number of graphs (up to isomorphism) of the form $G = G_1 \sqcup G_2 \sqcup \cdots \sqcup G_{|p|}$ for which $|V(G_j)| = p_j$ for all $j \in \{1,2,\ldots,|p|\}$. Summing over all $p \in S_n$ gives the result.
\end{proof}

We note that if $|\mathcal{C}_k| = 0$, then the binomial coefficient in Equation~\eqref{eq:Gn_size_formula} should follow the standard conventions that $\binom{a-1}{a} = 0$ when $a \geq 1$ and $\binom{-1}{0} = 1$. We also note that if $S_n = P_n$, then Theorem~\ref{thm:counting_graphs} reduces to the standard Euler transform for computing the number of not necessarily connected graphs with some property from the number of such connected graphs (see \cite[page~90]{HP73} and Corollary~\ref{cor:prime_count_lap_int_b}, for example). However, most of the cases of interest to us arise when $S_n \subsetneq P_n$.

Theorem~\ref{thm:counting_graphs} can be used to count many families of graphs of interest to us. For example, if $n$ is prime and $\mathcal{G}_n$ is the set of complements of connected $n$-vertex Laplacian integral graphs, then Theorem~\ref{thm:prime_main} tells us that $G \in \mathcal{G}_n$ if and only if the connected components of $G$ (i.e., $G_1$, $G_2$, $\ldots$, $G_d$ for $d \geq 2$) are each Laplacian integral. In other words, Theorem~\ref{thm:counting_graphs} applies with $\mathcal{C}_k$ being the set of $k$-vertex connected Laplacian integral graphs and $S_n$ being the set of integer partitions of $n$ with more than one part (since $d \geq 2$): $S_n = P_n\setminus\{n\}$. Even more explicitly, we get the following corollary:

\begin{corollary}\label{cor:prime_count_lap_int}
    Let $cl(k)$ denote the number of connected $k$-vertex Laplacian integral graphs. If $n$ is prime then
    \begin{align}\label{eq:cl_recurrence}
        cl(n) = \sum_{p \in P_n\setminus\{n\}} \prod_{k=1}^{n-1} \binom{cl(k)+m_p(k)-1}{m_p(k)}.
    \end{align}
\end{corollary}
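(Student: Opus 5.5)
The plan is to apply Theorem~\ref{thm:counting_graphs} to the set $\mathcal{G}_n$ of complements of connected $n$-vertex Laplacian integral graphs, with $\mathcal{C}_k$ the set of connected $k$-vertex Laplacian integral graphs (so $|\mathcal{C}_k| = cl(k)$) and $S_n = P_n \setminus \{n\}$. Two facts are needed for this. First, complementation is a bijection on isomorphism classes of $n$-vertex graphs, so $|\mathcal{G}_n| = cl(n)$. Second, $\mathcal{G}_n$ coincides exactly with the set of graphs described in Theorem~\ref{thm:counting_graphs} for these choices.

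For the second fact I would argue both inclusions using Theorem~\ref{thm:prime_main}, which requires $n$ prime.

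\emph{Forward inclusion.} Let $G$ be connected and Laplacian integral. Then $G^\textup{c} = G_1 \sqcup \cdots \sqcup G_d$, where $d \geq 2$ and each $G_j$ is connected and Laplacian integral. Hence every component lies in the appropriate $\mathcal{C}_k$, and the partition of component sizes has at least two parts, so it lies in $P_n\setminus\{n\}$.

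\emph{Reverse inclusion.} Let $H$ be an $n$-vertex graph whose components are connected Laplacian integral graphs, with at least two components. Then $H^\textup{c}$ is connected and Laplacian integral by the ``if'' direction of Theorem~\ref{thm:prime_main}, which holds for all $n$. So $H \in \mathcal{G}_n$.

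With both facts in hand, Equation~\eqref{eq:Gn_size_formula} gives
\[
cl(n) = \sum_{p \in P_n\setminus\{n\}} \prod_{k=1}^{n} \binom{cl(k)+m_p(k)-1}{m_p(k)}.
\]
It remains only to cut the product at $k = n-1$. Every $p \in P_n \setminus \{n\}$ has all parts at most $n-1$, so $m_p(n) = 0$. The $k=n$ factor is then $\binom{cl(n)-1}{0} = 1$, since $cl(n) \geq 1$ because $K_n$ is connected and Laplacian integral. Dropping this factor yields Equation~\eqref{eq:cl_recurrence}.

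None of these steps is a real obstacle. The only point needing care is confirming that the correspondence from graphs to (partition, multiset of components) matches the isomorphism-class counting in Theorem~\ref{thm:counting_graphs}. This holds because a disjoint union of connected graphs is determined up to isomorphism by the multiset of isomorphism classes of its components.
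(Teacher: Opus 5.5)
Your proposal is correct and matches the paper's argument: apply Theorem~\ref{thm:counting_graphs} with $\mathcal{C}_k$ the connected $k$-vertex Laplacian integral graphs and $S_n = P_n\setminus\{n\}$, using Theorem~\ref{thm:prime_main} to identify complements of connected prime-order Laplacian integral graphs with disjoint unions of at least two connected Laplacian integral graphs. Your explicit truncation of the product at $k=n-1$ (via $m_p(n)=0$) is a detail the paper leaves implicit; appealing to $cl(n)\geq 1$ there is unnecessary, since $\binom{a-1}{0}=1$ holds for every $a\geq 0$ under the paper's convention.
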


For example, if $n = 7$ then the sum described by Corollary~\ref{cor:prime_count_lap_int} is exactly the sum of $14$ terms at the end of Example~\ref{ex:small_counting}. More importantly, when we combine this method with our explicit computation of all connected Laplacian integral graphs of order up to $12$, Corollary~\ref{cor:prime_count_lap_int} in the $n = 13$ case tells us that there are $91,918$ connected Laplacian integral graphs of order $13$; see Table~\ref{tab:graph_counts_thirteen}.

By instead letting $\mathcal{G}_n$ be the set of (complements of) all (not necessarily connected) Laplacian integral graphs, and letting $S_n = P_n$, we get a formula for counting the number of (not necessarily connected) Laplacian integral graphs in terms of the number of connected Laplacian integral graphs, even when $n$ is not prime:

\begin{corollary}\label{cor:prime_count_lap_int_b}
    Let $l(k)$ and $cl(k)$ denote the number of $k$-vertex Laplacian integral and connected $k$-vertex Laplacian integral graphs, respectively. Then
    \[
        l(n) = \sum_{p \in P_n} \prod_{k=1}^{n} \binom{cl(k)+m_p(k)-1}{m_p(k)}.
    \]
\end{corollary}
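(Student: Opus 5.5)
This follows directly from Theorem~\ref{thm:counting_graphs}; the only real work is checking its hypotheses. I will apply it with $S_n = P_n$ and with $\mathcal{C}_k$ taken to be the set of connected $k$-vertex Laplacian integral graphs, so that $|\mathcal{C}_k| = cl(k)$.

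First, I will identify the resulting $\mathcal{G}_n$ with the set of all $n$-vertex Laplacian integral graphs. Let $G$ be any graph with connected components $G_1, \ldots, G_d$. Then $L(G)$ is block diagonal with blocks $L(G_1), \ldots, L(G_d)$, possibly after permuting vertices. Hence the spectrum of $L(G)$ is the multiset union of the spectra of the $L(G_j)$. It follows that $G$ is Laplacian integral if and only if every component $G_j$ is Laplacian integral. Each component is connected, so this says exactly that $G_j \in \mathcal{C}_{|V(G_j)|}$ for all $j$.

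Next, the partition condition in Theorem~\ref{thm:counting_graphs} is vacuous here. For any graph $G$, the multiset $\{|V(G_1)|, \ldots, |V(G_d)|\}$ is automatically an integer partition of $n$, so it lies in $S_n = P_n$. Therefore $\mathcal{G}_n$ is precisely the set of $n$-vertex Laplacian integral graphs, up to isomorphism, and $|\mathcal{G}_n| = l(n)$.

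Substituting $|\mathcal{C}_k| = cl(k)$ into Equation~\eqref{eq:Gn_size_formula} then gives the stated formula. If $k$ does not appear in $p$, then $m_p(k) = 0$ and the factor equals $1$. The sum runs over all of $P_n$, including the one-part partition $\{n\}$, which contributes the $cl(n)$ connected graphs. I do not expect any genuine obstacle; the only point needing care is the block-diagonal spectral decomposition in the first step, which is standard.
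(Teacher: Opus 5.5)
Your proposal is correct and matches the paper's argument. Like the paper, you apply Theorem~\ref{thm:counting_graphs} with $S_n = P_n$ and $\mathcal{C}_k$ the connected $k$-vertex Laplacian integral graphs, using that a graph is Laplacian integral exactly when each of its components is; the paper phrases $\mathcal{G}_n$ as (complements of) Laplacian integral graphs, which gives the same count since complementation preserves Laplacian integrality, and your block-diagonal justification spells out the step the paper leaves implicit.
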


For example, if $n = 12$ then Corollary~\ref{cor:prime_count_lap_int_b} tells us that there are $62,052$ Laplacian integral graphs of order $12$ (it also tells us that there are $183,836$ Laplacian integral graphs of order $13$, but that number is more easily obtained from Corollary~\ref{cor:prime_count_lap_int_half} since $13$ is prime); see Table~\ref{tab:graph_counts_thirteen}.

\subsection{Enumerating $\{-1,0,1\}$-diagonalizable graphs}\label{sec:counting_zero_one}

The process for using Theorems~\ref{thm:prime_main} and~\ref{thm:counting_graphs} to enumerate $\{-1,0,1\}$-diagonalizable graphs is much the same as it was for Laplacian integral graphs. The only substantial difference is that, instead of summing over the set of non-singleton integer partitions $P_n \setminus \{n\}$ when computing the number of connected graphs, we sum over the set of balanced vectors with positive integer entries that sum to $n$ (re-interpreted as a multiset instead of a vector in the natural way). We denote the set of all such ``balanced multisets'' by $B_n$, and we leave the details of how to enumerate these multisets to Appendix~\ref{app:balanced}.

Because of this restriction to balanced vectors, and the fact that the largest entry in a balanced vector $\vv \in \Zp^d$ ($d \geq 2$) that sums to $n$ cannot exceed $\lfloor n/2 \rfloor$ (Lemma~\ref{lem:balanced_max_entry}), the product formula for counting connected $\{-1,0,1\}$-diagonalizable graphs when $n$ is prime can be truncated at $\lfloor n/2 \rfloor$ rather than at $n-1$:

\begin{corollary}\label{cor:prime_count_nozo_int}
    Let $cs(k)$ denote the number of connected $k$-vertex $\{-1,0,1\}$-diagonalizable graphs. If $n$ is prime then
    \begin{align}\label{eq:cu_recurrence}
        cs(n) = \sum_{p \in B_n \setminus \{n\}} \prod_{k=1}^{\lfloor n/2 \rfloor} \binom{cs(k)+m_p(k)-1}{m_p(k)}.
    \end{align}
\end{corollary}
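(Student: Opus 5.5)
The plan is to apply Theorem~\ref{thm:counting_graphs} to the set $\mathcal{G}_n$ of complements of connected $n$-vertex $\{-1,0,1\}$-diagonalizable graphs. For $\mathcal{C}_k$ we take the set of connected $k$-vertex $\{-1,0,1\}$-diagonalizable graphs, so $|\mathcal{C}_k| = cs(k)$. For $S_n$ we take $B_n \setminus \{n\}$, the balanced multisets with at least two parts summing to $n$. Complementation preserves isomorphism classes, so $cs(n) = |\mathcal{G}_n|$. The work is therefore to verify that $\mathcal{G}_n$ is exactly the set described in Theorem~\ref{thm:counting_graphs}, and then to justify truncating the product.

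First, the identification of $\mathcal{G}_n$. Let $G$ be a connected $n$-vertex $\{-1,0,1\}$-diagonalizable graph with $n$ prime. Then $G$ is Laplacian integral, so Theorem~\ref{thm:prime_main} applies. It says that the components $G_1,\ldots,G_d$ of $G^\textup{c}$, with $d\ge 2$, are $\{-1,0,1\}$-diagonalizable and that their order vector is balanced. Hence the multiset of component orders lies in $B_n\setminus\{n\}$.

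Conversely, take any graph whose components are connected $\{-1,0,1\}$-diagonalizable graphs and whose multiset of orders is a balanced multiset in $B_n\setminus\{n\}$. Such a multiset has at least two parts. By the ``if'' direction of \cite[Corollary~2]{johnston2025laplacian}, recalled at the start of this section, the complement of this graph is connected and $\{-1,0,1\}$-diagonalizable. Here one should note that balancedness is invariant under permuting entries (\cite[Fact~1]{johnston2025laplacian}), so it is a property of the multiset. The singleton $\{n\}$ is excluded because it would correspond to $d=1$, and a $1$-component complement is not covered by the decomposition. Theorem~\ref{thm:counting_graphs} then gives the formula with the product running over $k=1,\ldots,n$.

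Second, the truncation. Let $p\in B_n\setminus\{n\}$. It corresponds to a balanced vector $\vv\in\Zp^d$ with $d\ge2$ summing to $n$, so Lemma~\ref{lem:balanced_max_entry} gives $m_p(k)=0$ for every $k>\lfloor n/2\rfloor$. For such $k$ the factor is $\binom{cs(k)-1}{0}$. This equals $1$ under the stated conventions, including $\binom{-1}{0}=1$ in case $cs(k)=0$. So these factors can be dropped, and the product stops at $\lfloor n/2\rfloor$.

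The only delicate point is the first step, where Theorem~\ref{thm:prime_main} must deliver a genuine bijection. The ``only if'' direction relies on primality through Lemma~\ref{lem:int_eig_implies_Gc_disc}. The ``if'' direction needs each $G_j$ connected and $\{-1,0,1\}$-diagonalizable, which is exactly the membership condition $G_j \in \mathcal{C}_{|V(G_j)|}$. Once that is settled, everything else is bookkeeping.
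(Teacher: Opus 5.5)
Your proposal is correct and follows the paper's own route. You apply Theorem~\ref{thm:counting_graphs} with $\mathcal{C}_k$ the connected $k$-vertex $\{-1,0,1\}$-diagonalizable graphs and $S_n = B_n \setminus \{n\}$, justify that identification with Theorem~\ref{thm:prime_main} (including the converse via the ``if'' direction of the cited Corollary~2), and truncate the product at $\lfloor n/2 \rfloor$ using Lemma~\ref{lem:balanced_max_entry}.
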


In particular, if we have a complete list of $\{-1,0,1\}$-diagonalizable graphs up to a given order, then this method lets us enumerate and count all such graphs of prime order roughly twice as large. Since our computational techniques produced all connected $\{-1,0,1\}$-diagonalizable graphs on up to $12$ vertices (see Appendix~\ref{sec:appendix_compute_lapint}), this method produces all such graphs on $13$, $17$, $19$, and $23$ vertices (see Table~\ref{tab:prime_101_all}).

\begin{table}[!htb]
    \centering
    \begin{tabular}{clc}\toprule
        \# of vertices & connected $\{-1,0,1\}$-diagonalizable graphs & \# of graphs\\\toprule
        $2$ & $K_2$ & $1$ \\\midrule
        $3$ & $K_3$ & $1$ \\\midrule
        $5$ & $K_5$, \ $K_{2,1,1,1}$ & $2$ \\\midrule
        $7$ & $K_7$, \ $K_{3,2,1,1}$, \ $K_{3,1,1,1,1}$, \ $K_{2,2,1,1,1}$, \ $K_{2,1,1,1,1,1}$ & $5$ \\\midrule
        $11$ & $K_{11}$, \ $K_{5,3,1,1,1}$, \ $K_{5,2,2,1,1}$, \ $K_{5,2,1,1,1,1}$, \ $K_{5,1,1,1,1,1,1}$, \ $K_{4,3,2,1,1}$, \ $K_{4,3,1,1,1,1}$, & $35$ \\
        & $K_{4,2,2,1,1,1}$, \ $K_{4,2,1,1,1,1,1}$, \ $K_{4,1,1,1,1,1,1,1}$, \ $K_{3,3,2,2,1}$, \ $K_{3,3,2,1,1,1}$, \ $K_{3,3,1,1,1,1,1}$, & \\
        & $K_{3,2,2,2,1,1}$, \ $K_{3,2,2,1,1,1,1}$, \ $K_{3,2,1,1,1,1,1,1}$, \ $K_{3,1,1,1,1,1,1,1,1}$, \ $K_{2,2,2,2,1,1,1}$, & \\
        & $K_{2,2,2,1,1,1,1,1}$, \ $K_{2,2,1,1,1,1,1,1,1}$, \ $K_{2,1,1,1,1,1,1,1,1,1}$, & \\
        & $(K_{2,1,1,1} \sqcup K_3 \sqcup 3K_1)^\textup{c}$, \ $(K_{2,1,1,1} \sqcup 2K_2 \sqcup 2K_1)^\textup{c}$, \ $(K_{2,1,1,1} \sqcup K_2 \sqcup 4K_1)^\textup{c}$, & \\
        & $(K_{2,1,1,1} \sqcup 6K_1)^\textup{c}$, \ $(K_{2,2} \sqcup K_3 \sqcup K_2 \sqcup 2K_1)^\textup{c}$, \ $(K_{2,1,1} \sqcup K_3 \sqcup K_2 \sqcup 2K_1)^\textup{c}$, & \\
        & $(K_{2,2} \sqcup K_3 \sqcup 4K_1)^\textup{c}$, \ $(K_{2,1,1} \sqcup K_3 \sqcup 4K_1)^\textup{c}$, \ $(K_{2,2} \sqcup 2K_2 \sqcup 3K_1)^\textup{c}$, & \\
        & $(K_{2,1,1} \sqcup 2K_2 \sqcup 3K_1)^\textup{c}$, \ $(K_{2,2} \sqcup K_2 \sqcup 5K_1)^\textup{c}$, \ $(K_{2,1,1} \sqcup K_2 \sqcup 5K_1)^\textup{c}$, & \\
        & $(K_{2,2} \sqcup 7K_1)^\textup{c}$, \ $(K_{2,1,1} \sqcup 7K_1)^\textup{c}$ & \\\midrule
        $13$ & Too many to list; see \cite{VaronaCode}. & $124$ \\\midrule
        $17$ & -- & $1,155$ \\\midrule
        $19$ & -- & $3,449$ \\\midrule
        $23$ & -- & $33,664$ \\\bottomrule
    \end{tabular}
    \caption{All connected $\{-1,0,1\}$-diagonalizable graphs of small prime orders. See Table~\ref{tab:graph_counts_thirteen} for known non-prime counts.}\label{tab:prime_101_all}
\end{table}

Since a graph $G$ is $\{-1,0,1\}$-diagonalizable if and only if the same is true of each of its connected components \cite[Proposition~1(b)]{johnston2025laplacian}, we also get a method of turning counts of connected $\{-1,0,1\}$-diagonalizable graphs into counts of all (not necessarily connected) such graphs:

\begin{corollary}\label{cor:prime_count_nozo_int_b}
    Let $s(k)$ and $cs(k)$ denote the number of $k$-vertex $\{-1,0,1\}$-diagonalizable and connected $k$-vertex $\{-1,0,1\}$-diagonalizable graphs, respectively. Then
    \[
        s(n) = \sum_{p \in P_n} \prod_{k=1}^{n} \binom{cs(k)+m_p(k)-1}{m_p(k)}.
    \]
\end{corollary}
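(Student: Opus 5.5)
This is a direct application of Theorem~\ref{thm:counting_graphs}, with no primality needed. I will take $\mathcal{G}_n$ to be the set of all $n$-vertex $\{-1,0,1\}$-diagonalizable graphs. For each $k$, I will take $\mathcal{C}_k$ to be the set of connected $k$-vertex $\{-1,0,1\}$-diagonalizable graphs, so $|\mathcal{C}_k| = cs(k)$. Finally, I will take $S_n = P_n$, the full set of integer partitions of $n$.

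The first step is to check that, with these choices, $\mathcal{G}_n$ is exactly the set described in Theorem~\ref{thm:counting_graphs}. By \cite[Proposition~1(b)]{johnston2025laplacian}, a graph $G$ is $\{-1,0,1\}$-diagonalizable if and only if each of its connected components $G_1, \ldots, G_d$ is. So $G$ lies in $\mathcal{G}_n$ precisely when $G_k \in \mathcal{C}_{|V(G_k)|}$ for every $k$. The second condition of Theorem~\ref{thm:counting_graphs} holds automatically, because the component orders always form an integer partition of $n$, and that partition lies in $S_n = P_n$.

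Applying Equation~\eqref{eq:Gn_size_formula} then gives $s(n) = |\mathcal{G}_n|$ equal to the stated sum, since $m_p(k)=0$ contributes a factor $\binom{cs(k)-1}{0}=1$ under the stated conventions. The argument is parallel to Corollary~\ref{cor:prime_count_lap_int_b}, and I see no real obstacle. The only point needing care is that Proposition~1(b) is an \emph{if and only if}. This is what lets us take $S_n$ to be all of $P_n$, with no balancedness restriction: balancedness only constrains the complement of a disjoint union, not the union itself.
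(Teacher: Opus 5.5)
Your proposal is correct and matches the paper's argument. The paper likewise obtains the formula from Theorem~\ref{thm:counting_graphs} with $S_n = P_n$ and $\mathcal{C}_k$ the connected $k$-vertex $\{-1,0,1\}$-diagonalizable graphs, justified by the componentwise characterization \cite[Proposition~1(b)]{johnston2025laplacian}.
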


For example, if $n = 13$ then Corollary~\ref{cor:prime_count_nozo_int_b} tells us that there are $1,445$ $\{-1,0,1\}$-diagonalizable graphs of order $13$ (see Table~\ref{tab:graph_counts_thirteen} for counts related to other values of $n$).

\section{Regular integral graphs}\label{sec:prime_regular}

We now specialize even further and enumerate \emph{regular} integral and $\{-1,0,1\}$-diagonalizable graphs (for regular graphs, Laplacian integrality is the same as adjacency integrality, so we can safely drop the ``Laplacian'' qualifier). Define the following four functions that count regular graphs of various types:
\begin{itemize}
    \item $rl(n)$: the number of regular integral graphs of order $n$.
    
    \item $crl(n)$: the number of connected regular integral graphs of order $n$.
    
    \item $rs(n)$: the number of regular $\{-1,0,1\}$-diagonalizable graphs of order $n$.
    
    \item $crs(n)$: the number of connected regular $\{-1,0,1\}$-diagonalizable graphs of order $n$.
\end{itemize}
We also use $rl(n,r)$ to denote the number of $r$-regular integral graphs of order $n$, and similarly for $crl(n,r)$, $rs(n,r)$, and $crs(n,r)$.

We used our computational toolkit to determine all regular graphs of these types up to order~$15$ (see Table~\ref{tab:graph_counts_thirteen} and \cite{VaronaCode}), and we can use Theorem~\ref{thm:prime_main} to extend this enumeration to larger prime orders. In fact, if $n$ is prime, then Theorem~\ref{thm:prime_main} becomes more useful than it might initially appear to be. Because each of $G_1$, $G_2$, $\ldots$, $G_d$ must be regular \emph{of the same degree} in order for $G$ to be regular, the form of $G$ is heavily restricted. For example, since $d \geq 2$, each $G_j$ must have degree at most $\lfloor n/2 \rfloor$ (if this were not the case, then $G$ would have strictly more than $d \cdot (\lfloor n/2 \rfloor + 1) \geq n$ vertices). As a result, the recurrence described by Theorem~\ref{thm:counting_graphs}, in the case of connected regular integral graphs, depends only on the connected regular integral graphs of order up to $\lfloor n/2 \rfloor$, not up to $n-1$. A more careful version of this analysis leads to the following result:

\begin{corollary}\label{cor:prime_reg_count_lap_int}
    Let $0 \leq r < n$ be integers and let $P_{n,r}$ denote the set of integer partitions of $n$ for which each part is between $r+1$ and $n-r-1$ inclusive. If $n$ is prime then
    \begin{align}\label{eq:crl_recurrence}
        crl(n,n-r-1) & = \begin{cases}
            0 & \text{if $r$ is odd or $r \geq 2\left\lfloor \frac{n+1}{4} \right\rfloor$},\\
            \displaystyle\sum_{p \in P_{n,r}} \prod_{k=r+1}^{n-r-1} \binom{crl(k,r)+m_p(k)-1}{m_p(k)} & \text{otherwise, and}
        \end{cases} \\\label{eq:crl_recurrenceb}
        rl(n,r) & = rl(n,n-r-1) = crl(n,n-r-1) \quad \text{for} \quad r \leq \frac{n-1}{2}.
    \end{align}
    Furthermore, if $r \leq (n-1)/2$ and $n$ is not necessarily prime then
    \begin{align}\label{eq:crl_nonprime}
        crl(n,n-r-1) & = crl(n,r) + \sum_{p \in P_{n,r}} \prod_{k=r+1}^{n-r-1} \binom{crl(k,r)+m_p(k)-1}{m_p(k)}.
    \end{align}
\end{corollary}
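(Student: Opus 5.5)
The plan is to handle the general (not necessarily prime) identity \eqref{eq:crl_nonprime} first and then specialize to prime $n$. Throughout, I use that $G$ is $(n-r-1)$-regular if and only if $G^\textup{c}$ is $r$-regular. I also use that for regular graphs Laplacian integrality is preserved under complementation, since $L(G^\textup{c}) = nI - J - L(G)$. The basic graph-theoretic fact I need is that a graph of order $n$ with minimum degree at least $(n-1)/2$ is connected: two non-adjacent vertices have together at least $n-1$ neighbours among the other $n-2$ vertices, so they share one.

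For \eqref{eq:crl_nonprime}, fix $r \leq (n-1)/2$. Then $n-r-1 \geq (n-1)/2$, so every $(n-r-1)$-regular graph is automatically connected. Hence $crl(n,n-r-1)$ counts all $r$-regular integral graphs $H = G^\textup{c}$ on $n$ vertices, and I split according to whether $H$ is connected.
\begin{itemize}
    \item The connected $H$ contribute exactly $crl(n,r)$.
    \item For disconnected $H$, each component is a connected $r$-regular graph, hence has order $k \geq r+1$.
    \item Since there are at least two components, each has order at most $n-(r+1)$, so the multiset of component orders lies in $P_{n,r}$. Every partition in $P_{n,r}$ has at least two parts, since no part can equal $n$.
    \item Each component is Laplacian integral exactly when $H$ is (the spectrum of a disjoint union is the union of the spectra).
\end{itemize}
Applying Theorem~\ref{thm:counting_graphs} with $\mathcal{C}_k$ the set of connected $r$-regular integral graphs on $k$ vertices and $S_n = P_{n,r}$ then gives the sum in \eqref{eq:crl_nonprime}.

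Now let $n$ be prime. If $n = 2$ the statement is checked by hand, so assume $n$ is odd. First I show $crl(n,r) = 0$ for $r \leq (n-1)/2$, excluding trivial cases such as $n=1$. Suppose $H$ is connected, $r$-regular and integral. By Lemma~\ref{lem:int_eig_implies_Gc_disc}, $H^\textup{c}$ is disconnected. But $H^\textup{c}$ is $(n-r-1)$-regular with $n-r-1 \geq (n-1)/2$, so it is connected by the fact above, a contradiction.

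With $crl(n,r)=0$, \eqref{eq:crl_nonprime} reduces to the sum in \eqref{eq:crl_recurrence}. It remains to justify the zero cases.
\begin{itemize}
    \item If $r$ is odd, any $r$-regular graph has an even number of vertices. Every partition of the odd number $n$ has an odd part $k$, and $crl(k,r)=0$ for that part, so every product vanishes. Alternatively, no $r$-regular graph on $n$ vertices exists at all.
    \item If $r$ is even, $P_{n,r}$ is nonempty exactly when $n \geq 2(r+1)$, that is, when $r \leq (n-2)/2$. For $n = 4m+1$ this means $r \leq 2m-2$, and for $n = 4m+3$ it means $r \leq 2m$. In both cases this is equivalent to $r < 2\lfloor (n+1)/4 \rfloor$.
\end{itemize}
This arithmetic translation of ``$P_{n,r}$ nonempty and $r$ even'' into the threshold $2\lfloor (n+1)/4\rfloor$ is the step I expect to need the most care, though it is only a case split on $n \bmod 4$.

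Finally, \eqref{eq:crl_recurrenceb} follows directly. Complementation gives $rl(n,r) = rl(n,n-r-1)$. For $r \leq (n-1)/2$, every $(n-r-1)$-regular graph is connected, so $rl(n,n-r-1) = crl(n,n-r-1)$.
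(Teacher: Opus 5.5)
Your argument is sound in substance but runs in the opposite direction from the paper's, and as written it misses one range of $r$.

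\textbf{Comparison of routes.} The paper gets \eqref{eq:crl_recurrence} directly from Theorem~\ref{thm:prime_main}. For prime $n$, the complement of any connected integral $(n-r-1)$-regular graph splits into $d\ge 2$ connected integral $r$-regular components. So Theorem~\ref{thm:counting_graphs} with $S_n=P_{n,r}$ applies for \emph{every} $0\le r<n$. The vanishing for $r\ge 2\lfloor (n+1)/4\rfloor$ is then just $P_{n,r}=\emptyset$, and \eqref{eq:crl_nonprime} comes afterwards by re-admitting the $d=1$ term.

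You go the other way. You prove \eqref{eq:crl_nonprime} first, using an explicit min-degree connectivity argument. You then kill the $d=1$ term by showing, via Lemma~\ref{lem:int_eig_implies_Gc_disc}, that $crl(n,s)=0$ for prime $n$ and $s\le (n-1)/2$. That intermediate fact is a nice by-product; the paper only states it explicitly later, in Corollary~\ref{cor:reg_lap_prime_existence}. The cost is that your route inherits the restriction $r\le (n-1)/2$ from \eqref{eq:crl_nonprime}, and that restriction is genuinely needed there: for $(n,r)=(4,2)$ the identity would give $crl(4,1)=crl(4,2)=1$, but $crl(4,1)=0$.

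\textbf{The uncovered range.} Statement \eqref{eq:crl_recurrence} is asserted for all $0\le r<n$, and for even $r\ge (n+1)/2$ it claims $crl(n,n-r-1)=0$.
\begin{itemize}
\item Your odd-$r$ bullet is fine for every $r$, via the handshaking alternative.
\item Your even-$r$ bullet deduces vanishing from $P_{n,r}=\emptyset$. That is only legitimate where you have already established the sum formula, i.e.\ for $r\le (n-1)/2$.
\end{itemize}
So the zero clause for even $r\ge (n+1)/2$ is never actually proved. The fix is one line using your own lemma: for such $r$, set $s=n-r-1\le (n-3)/2$; then $crl(n,n-r-1)=crl(n,s)=0$.

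\textbf{The case $n=2$.} This cannot actually be ``checked by hand.'' There $2\lfloor 3/4\rfloor=0$, so with $r=0$ the zero clause asserts $crl(2,1)=0$, whereas $K_2$ gives $crl(2,1)=1$. The statement really needs $n$ odd. The paper's ``trivial when $n\le 2$'' has the same blind spot.
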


\begin{proof}
    These claims are all trivial when $n \leq 2$, so throughout the proof we assume that $n \geq 3$.
    
    If $n$ is prime (and thus odd) then the claim that $crl(n,n-r-1) = 0$ when $r$ is odd follows from the handshaking lemma. The claim that $crl(n,n-r-1) = 0$ when $r \geq 2\left\lfloor \frac{n+1}{4} \right\rfloor$ will follow from the general formula~\eqref{eq:crl_recurrence} (once we have established it), since $n-r-1 < r+1$ when $n$ is odd and $r \geq 2\left\lfloor \frac{n+1}{4} \right\rfloor$.

    The general formula~\eqref{eq:crl_recurrence} follows from Theorem~\ref{thm:counting_graphs} when $\mathcal{G}_n$ is the set of connected $n$-vertex $(n-r-1)$-regular graphs and $\mathcal{C}_n$ is the set of connected $n$-vertex $r$-regular graphs. The bounds on $k$ come from the fact that every $r$-regular graph $G_j$ must have at least $r+1$ vertices, and to appear in a decomposition of the form $G = (G_1 \sqcup G_2 \sqcup \cdots \sqcup G_d)^{\textup{c}}$ with $d \geq 2$ must have at most $n-r-1$ vertices (so that at least one other $G_j$ with at least $r+1$ vertices can be present).

    The equality $rl(n,n-r-1) = crl(n,n-r-1)$ of Equation~\eqref{eq:crl_recurrenceb} comes from the fact that if $r \leq (n-1)/2$ then all $(n-r-1)$-regular graphs on $n$ vertices are connected (since there aren't enough vertices for two $(n-r-1)$-regular connected components). The equality $rl(n,r) = rl(n,n-r-1)$ comes from noticing that complementation is a bijection between the sets of $r$-regular and $(n-r-1)$-regular graphs on $n$ vertices.

    All that remains is to prove Equation~\eqref{eq:crl_nonprime} when $n$ is not necessarily prime. To this end, we simply apply Theorem~\ref{thm:counting_graphs} exactly as we did in the prime case; the only difference here when counting connected $(n-r-1)$-regular graphs $G$ is that we don't have the guarantee (provided by Theorem~\ref{thm:prime_main} when $n$ is prime) that $G^{\textup{c}}$ is disconnected (i.e., it's possible that $d = 1$). The extra term $crl(n,r)$ accounts for this $d = 1$ case.
\end{proof}

\begin{example}\label{exam:n17_regular_integral}
    To illustrate how the counting of Corollary~\ref{cor:prime_reg_count_lap_int} works in the prime case, consider the problem of computing the number of connected regular integral graphs of order $17$. We need to compute $crl(n,n-r-1)$ for $r \in \{0,2,4,6\}$.

    If $r = 0$ then $crl(17,16) = 1$ since the only $16$-regular graph on $17$ vertices is $K_{17}$, and it is connected and integral.

    If $r = 2$ then Equation~\eqref{eq:crl_recurrence} says that
    \[
        crl(17,14) = \sum_{p \in P_{17,2}} \prod_{k=3}^{14} \binom{crl(k,2)+m_p(k)-1}{m_p(k)}.
    \]
    Since the only connected $2$-regular integral graphs are the cycle graphs $C_3$, $C_4$, and $C_6$ \cite{GM94}, we have $crl(k,2) = 1$ if $k \in \{3,4,6\}$ and $crl(k,2) = 0$ otherwise, so the only non-zero contributions to this sum arise when $k \in \{3,4,6\}$. The only integer partitions of $17$ with all parts equal to $3$, $4$, or $6$ are
    \[
        \{3,3,3,4,4\} \quad \text{and} \quad \{3,4,4,6\},
    \]
    so $crl(17,14) = 2$ (the two $17$-vertex connected $14$-regular integral graphs are $(3C_3 \sqcup 2C_4)^{\textup{c}}$ and $(C_3 \sqcup 2C_4 \sqcup C_6)^{\textup{c}}$).
    
    If $r = 4$ then
    \[
        crl(17,12) = \sum_{p \in P_{17,4}} \prod_{k=5}^{12} \binom{crl(k,4)+m_p(k)-1}{m_p(k)}.
    \]
    The only integer partitions of $17$ with all parts between $r+1 = 5$ and $n-r-1 = 12$, inclusive, are
    \[
        \{5,5,7\}, \ \{5,6,6\}, \ \{5, 12\}, \ \{6, 11\}, \ \{7,10\}, \ \text{and} \ \{8,9\}.
    \]
    Since the numbers of connected $4$-regular integral graphs on $5$, $6$, $7$, $8$, $9$, $10$, $11$, and $12$ vertices are $1$, $1$, $1$, $2$, $4$, $1$, $0$, and $8$ respectively (see Table~\ref{tab:graph_counts_regular}), we have
    \begin{align*}
        crl(17,12) = 1 \cdot 1 + 1 \cdot 1 + 1 \cdot 8 + 1 \cdot 0 + 1 \cdot 1 + 2 \cdot 4 = 19.
    \end{align*}

    Finally, if $r = 6$ then
    \[
        crl(17,10) = \sum_{p \in P_{17,6}} \prod_{k=7}^{10} \binom{crl(k,6)+m_p(k)-1}{m_p(k)}.
    \]
    The only integer partitions of $17$ with all parts between $r+1 = 7$ and $n-r-1 = 10$, inclusive, are $\{7,10\}$ and $\{8,9\}$. Since $crl(7,6) = 1$, $crl(8,6) = 1$, $crl(9,6) = 2$, and $crl(10,6) = 5$ (see Table~\ref{tab:graph_counts_regular}), we have
    \begin{align*}
        crl(17,10) = 1 \cdot 5 + 1 \cdot 2 = 7.
    \end{align*}
    It follows that
    \begin{align*}
        crl(17) & = crl(17,16) + crl(17,14) + crl(17,12) + crl(17,10) \\
        & = 1 + 2 + 19 + 7 \\
        & = 29.
    \end{align*}
\end{example}

Similarly, if $n = 17$ then Corollary~\ref{cor:prime_reg_count_lap_int} tells us that there are $1$, $2$, $19$, and $7$ regular integral graphs of degree $0$, $2$, $4$, and $6$, respectively, and they are all disconnected. Despite $r = 8$ being even, there are no $8$-regular integral graphs of order $17$, a fact that we clarify with the following corollary:

\begin{corollary}\label{cor:reg_lap_prime_existence}
    Suppose $n \geq 3$ is prime and $r \in \{0,1,2,\ldots,n-1\}$.
    \begin{itemize}
        \item $rl(n,r) > 0$ if and only if $r$ is even and $r \neq (n-1)/2$.
        \item $crl(n,r) > 0$ if and only if $r$ is even and $r > (n-1)/2$.
    \end{itemize}
\end{corollary}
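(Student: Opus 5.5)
The plan is to first record the easy parity obstruction and then reduce both bullet points to statements about connected graphs using Theorem~\ref{thm:prime_main} and Corollary~\ref{cor:prime_reg_count_lap_int}. Since $n$ is prime and $n \geq 3$, $n$ is odd, so by the handshaking lemma no $r$-regular graph on $n$ vertices exists when $r$ is odd. Hence $rl(n,r) = crl(n,r) = 0$ for odd $r$, and from now on I assume $r$ is even.

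\emph{Non-existence for connected graphs with $r \leq (n-1)/2$.} Suppose $G$ is a connected $r$-regular integral graph of order $n$. By Theorem~\ref{thm:prime_main}, $G^{\textup{c}} = G_1 \sqcup \cdots \sqcup G_d$ with $d \geq 2$. Each component $G_j$ is $(n-r-1)$-regular, so $|V(G_j)| \geq n-r$. Therefore $n \geq 2(n-r)$, that is, $r \geq n/2$, and since $n$ is odd this gives $r \geq (n+1)/2$. Consequently $crl(n,r) = 0$ whenever $r \leq (n-1)/2$.

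\emph{Reduction of the first bullet to the second.} By Equation~\eqref{eq:crl_recurrenceb}, for $r \leq (n-1)/2$ we have $rl(n,r) = rl(n,n-r-1) = crl(n,n-r-1)$. When $r = (n-1)/2$ we get $n-r-1 = r$, so $rl(n,(n-1)/2) = crl(n,(n-1)/2) = 0$ by the previous paragraph. For $r > (n-1)/2$, every $r$-regular graph on $n$ vertices is connected, so $rl(n,r) = crl(n,r)$. For $r < (n-1)/2$, we have $rl(n,r) = crl(n,n-r-1)$ with $n-r-1 > (n-1)/2$, and $n-r-1$ is even. So both bullets follow once I show the following claim:
\begin{center}
$crl(n,r) > 0$ for every even $r$ with $(n-1)/2 < r \leq n-1$.
\end{center}

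\emph{Existence.} Put $s = n-r-1$, which is even with $0 \leq s \leq (n-3)/2$. By Theorem~\ref{thm:prime_main}, it suffices to write $n$ as a sum of at least two orders of connected $s$-regular integral graphs, because the complement of the corresponding disjoint union is then a connected $r$-regular integral graph. The natural building blocks are:
\begin{itemize}
    \item $K_{s+1}$, which has order $s+1$;
    \item the cocktail party graph $K_{2,2,\ldots,2}$ with $(s+2)/2$ parts, which has order $s+2$ (this is where evenness of $s$ is used);
    \item $K_{s,s}$, which has order $2s$.
\end{itemize}
All three are connected, $s$-regular, and integral. The case $s = 0$ is trivial, since $K_n = (nK_1)^{\textup{c}}$.

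For $s \geq 2$, I would write $n = q(s+1) + t$ with $0 \leq t \leq s$; note that $q \geq 2$ because $s \leq (n-3)/2$. If $t \leq q$, I take $t$ copies of the order-$(s+2)$ graph and $q-t$ copies of $K_{s+1}$, which uses at least two components and has total order exactly $n$.

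The main obstacle is the remaining range $t > q$, which happens for intermediate values of $s$ around $n/3$. There I would first try adding the order-$2s$ block $K_{s,s}$ to the available orders $\{s+1,\, s+2,\, 2s\}$. If that still fails, I would enlarge the supply of orders recursively. Any connected $s$-regular integral graph of order $m$ can itself be obtained, via Equation~\eqref{eq:crl_nonprime}, as the complement of a disjoint union of $(m-s-1)$-regular integral graphs. Using this, I would aim to show that the set of attainable orders is dense enough to cover every representation needed, for instance by showing that it contains every $m \geq s+1$ with $ms$ even that lies in a suitable range. This number-theoretic covering step is where I expect the real work of the proof to be.
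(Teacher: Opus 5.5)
Everything before the existence step is correct: the parity obstruction, the use of Theorem~\ref{thm:prime_main} to show $crl(n,r)=0$ for $r\le (n-1)/2$, the reduction of both bullets to the claim that $crl(n,r)>0$ for even $r>(n-1)/2$, and the case $t\le q$. The gap is the case $t>q$. You leave it open, and no fixed list of building blocks closes it. Take $n=17$, $r=10$, $s=6$ (so $q=2$, $t=3$). Your blocks have orders $7$, $8$, $12$, and $17$ is not a sum of at least two of them. Every part must be at least $7$, so the only admissible partitions are $\{7,10\}$ and $\{8,9\}$. You therefore genuinely need a connected $6$-regular integral graph on $9$ or $10$ vertices (e.g.\ $K_{3,3,3}$, or the complement of the Petersen graph). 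The intermediate orders you need need not be prime, and their complementary degrees $m-s-1$ are odd whenever $m$ is even. So the recursion has to be set up as an induction over all orders and both parities. Pointing to Equation~\eqref{eq:crl_nonprime} does not do this: it is only a counting identity, and showing that its sum is positive is the same covering problem one level down.

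The missing idea is to prove a stronger statement by strong induction over all integers $m\ge 3$: if $s'<(m-1)/2$ and $ms'$ is even, then $rl(m,s')>0$. The cases $s'=0,1$ are the empty graph and a perfect matching. For $s'\ge 2$, write $m$ as a sum of $d\ge 2$ integers $m_j\in\{s'+1,\ldots,2s'\}$ with each $m_js'$ even. The sums $(s'+1)+j$ for $s'+1\le j\le 2s'$ cover $[2s'+2,3s'+1]$, $3s'+2=(s'+2)+2s'$, and extra copies of $s'+1$ reach everything larger. If $s'$ is odd, then $m$ is even; do the same for $m/2$ with parts from $\{(s'+1)/2,\ldots,s'\}$, then double. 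Now $3\le m_j<m$, $m_j\le 2s'$ gives $m_j-s'-1<(m_j-1)/2$, and $m_j(m_j-s'-1)\equiv m_js' \pmod 2$. So the inductive hypothesis provides an $(m_j-s'-1)$-regular integral graph on $m_j$ vertices, whose complement is $s'$-regular and integral. The disjoint union of these complements is an $s'$-regular integral graph of order $m$. Applying this with $m=n$, $s'=s$ and complementing gives an $r$-regular integral graph, which is connected because $r>(n-1)/2$; this proves your claim. This is essentially the paper's argument. Instead of a fixed list of blocks, it makes every order in $\{s+1,\ldots,2s\}$ available, and that is what makes the covering step trivial.
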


\begin{proof}
    The ``only if'' directions of both bullet points follow immediately from Corollary~\ref{cor:prime_reg_count_lap_int} (notice that, if $r$ is even, $r = (n-1)/2$ is equivalent to $n \equiv 1 \pmod{4}$ and $r = 2\lfloor (n+1)/4\rfloor$).

    For the ``if'' directions, we prove the following slightly stronger assertion by induction on $n$: for every (not necessarily prime) $n \geq 3$, if $r < (n-1)/2$ and $nr$ is even, then $rl(n,r) > 0$. If we can prove this assertion, then the ``if'' direction of the first bullet point of the corollary will follow since $n \geq 3$ being prime implies $n$ is odd, so $nr$ being even is equivalent to $r$ being even. The second bullet point of the corollary will then follow by complementation.
    
    Notice (e.g., by Tables~\ref{tab:graph_counts_regular} and~\ref{tab:graph_counts_regular_disconnected}) that the assertion is true when $3 \leq n \leq 14$, which establishes the base case of the induction. Now suppose $n \geq 15$ and that the assertion has been proved for all smaller orders. Suppose $r < (n-1)/2$ and $nr$ is even. If $r = 0$, the empty graph on $n$ vertices shows that $rl(n,r) > 0$. If $r = 1$ then $n$ must be even, so $(n/2)K_2$ shows that $rl(n,r) > 0$. We can thus assume from now on that $r \geq 2$.
    
    We claim that if $n \geq 2r+2$ (i.e., $r < (n-1)/2$), then there exist integers $d \geq 2$ and $m_1, m_2, \ldots, m_d \in \{r+1,r+2,\ldots, 2r\}$ such that $n = m_1 + m_2 + \cdots + m_d$ and $rm_j$ is even for all $j \in \{1,2,\ldots,d\}$. If we can prove this claim, then we will be done with the inductive step (and the proof) since, for all $j \in \{1,2,\ldots,d\}$, we can let $G_j$ be a connected $r$-regular integral graph of order $m_j$ (which is guaranteed to exist by the inductive hypothesis), which results in $G_1 \sqcup G_2 \sqcup \cdots \sqcup G_d$ being an $r$-regular integral graph of order $n$. It thus suffices to prove the claim, and to this end we split into two cases.
    
    \textbf{Case 1:} $r$ is even. Then the requirement that $rm_j$ is even for all $j$ is automatically satisfied; we do not need to worry about the parity of $m_j$. The claim is true if $2r+2 \leq n \leq 3r+1$ since the integers between $2r+2$ and $3r+1$ inclusive are
    \[
        (r+1)+(r+1), (r+1)+(r+2), \ldots, (r+1)+2r.
    \]
    The claim is true if $n = 3r+2$ since $3r+2 = (r+2)+2r$. Finally, the claim is true if $n \geq 3r+3$ by adding more terms equal to $r+1$ to one of these 2-term decompositions.
    
    \textbf{Case 2:} $r$ is odd. Then $n$ must be even, and we need to show that we can find even $m_1, m_2, \ldots, m_d \in \{r+1,r+2,\ldots, 2r\}$ such that $n = m_1 + m_2 + \cdots + m_d$. It thus suffices to show that we can write $n/2$ as a sum of integers from $\{(r+1)/2, (r+3)/2, \ldots, r\}$. Since $n/2 \geq r+1$, this follows from the exact same argument as in Case~1, which completes the proof of the claim, and thus the proof of the inductive step, and thus the proof of the corollary.
\end{proof}

In the special case when $n = 13$ and $r = 6$, Corollary~\ref{cor:reg_lap_prime_existence} shows that there does not exist a $6$-regular integral graph of order $13$, thus solving \cite[Problem~3]{BSZ09}. (This problem was also solved by our computational results (see Table~\ref{tab:graph_counts_regular_disconnected}), but Corollary~\ref{cor:reg_lap_prime_existence} better explains ``why'' there are no such graphs in this case.)

When $n$ is not prime, we can use Equation~\eqref{eq:crl_nonprime} in much the same way that we used Equation~\eqref{eq:crl_recurrence} in the prime case in Example~\ref{exam:n17_regular_integral}. For example, if $n = 12$ and $r = 5$ then (because the only integer partitions of $12$ with all parts in $\{r+1,\ldots,n-r-1\}$ is $\{6,6\}$) we learn that
\begin{align}\label{eq:crl_126}
    crl(12,6) = crl(12,5) + crl(6,5) = crl(12,5) + 1,
\end{align}
with the final equality following from the fact that the only $5$-regular $6$-vertex graph is $K_6$. Computation shows that $crl(12,5) = 13$ (see Table~\ref{tab:graph_counts_regular}), from which Equation~\eqref{eq:crl_126} tells us that $crl(12,6) = 14$. The following corollary generalizes this observation.

\begin{corollary}\label{cor:reg_middle_vals_even}
    If $n$ is even then $\displaystyle crl\left(n,\frac{n}{2}\right) = crl\left(n,\frac{n}{2}-1\right) + 1$.
\end{corollary}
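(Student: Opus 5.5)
We will apply Equation~\eqref{eq:crl_nonprime} of Corollary~\ref{cor:prime_reg_count_lap_int} with $r = n/2 - 1$. The hypothesis $r \leq (n-1)/2$ holds, since $n/2 - 1 < (n-1)/2$. With this choice, $n - r - 1 = n/2$, so the left-hand side of Equation~\eqref{eq:crl_nonprime} is exactly $crl(n, n/2)$. The first term on the right-hand side is $crl(n, n/2 - 1)$.

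It therefore suffices to show that the partition sum equals $1$. The set $P_{n,r}$ consists of integer partitions of $n$ whose parts lie between $r+1 = n/2$ and $n - r - 1 = n/2$. Every part must therefore equal $n/2$, so the only such partition is $\{n/2, n/2\}$, which has $m_p(n/2) = 2$. The product then reduces to the single factor
\[
    \binom{crl(n/2, n/2-1) + 1}{2}.
\]

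Next we evaluate $crl(n/2, n/2 - 1)$. A graph on $n/2$ vertices that is $(n/2-1)$-regular must be $K_{n/2}$, which is connected and integral. Hence $crl(n/2, n/2-1) = 1$, and the factor is $\binom{2}{2} = 1$. This is the $d = 2$ term corresponding to $(K_{n/2} \sqcup K_{n/2})^{\textup{c}} = K_{n/2,n/2}$.

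The step needing the most care is the degenerate small cases. For $n = 2$ we have $r = 0$, the lone part $1$ gives $K_1$, and $crl(2,1) = 1 = crl(2,0) + 1$ requires $crl(2,0) = 0$, which holds since $2K_1$ is disconnected. So the formula still applies there. The derivation of Equation~\eqref{eq:crl_nonprime} assumed $n \geq 3$ only for convenience, so we will briefly confirm that $n = 2$ is consistent by direct check.
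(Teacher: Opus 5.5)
Your proposal is correct and follows the paper's proof essentially verbatim. You apply Equation~\eqref{eq:crl_nonprime} with $r = n/2-1$, note that $\{n/2,n/2\}$ is the only partition in $P_{n,n/2-1}$, and use that $K_{n/2}$ is the unique connected $(n/2-1)$-regular graph on $n/2$ vertices, so the extra term is $\binom{2}{2}=1$; your direct check of $n=2$ is a harmless addition that the paper leaves implicit.
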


\begin{proof}
    Equation~\eqref{eq:crl_nonprime} tells us that if $r = n/2 - 1$ then
    \begin{align}\begin{split}\label{eq:reg_mid_vals_even}
        crl\left(n,\frac{n}{2}\right) & = crl(n,n-r-1) \\
        & = crl(n,r) + \sum_{p \in P_{n,r}} \prod_{k=r+1}^{n-r-1} \binom{crl(k,r)+m_p(k)-1}{m_p(k)} \\
        & = crl\left(n,\frac{n}{2}-1\right) + \sum_{p \in P_{n,n/2-1}} \prod_{k=n/2}^{n/2} \binom{crl\left(k,\tfrac{n}{2}-1\right)+m_p(k)-1}{m_p(k)}.
    \end{split}\end{align}
    The only member of $P_{n,n/2-1}$ is the partition $\{n/2,n/2\}$ and the only connected integral $r$-regular graph on $r+1 = n/2$ vertices is $K_{r+1}$. It follows that
    \[
        \sum_{p \in P_{n,n/2-1}} \prod_{k=n/2}^{n/2} \binom{crl\left(k,\tfrac{n}{2}-1\right)+m_p(k)-1}{m_p(k)} = \binom{crl\left(\tfrac{n}{2},\tfrac{n}{2}-1\right)+1}{2} = 1.
    \]
    Combining this fact with Equation~\eqref{eq:reg_mid_vals_even} gives the result.
\end{proof}

A very similar argument gives a similar result when $n \equiv 3 \pmod{4}$:

\begin{corollary}\label{cor:reg_middle_vals}
    If $n \equiv 3 \pmod{4}$ then $\displaystyle crl\left(n,\frac{n+1}{2}\right) = crl\left(n,\frac{n-3}{2}\right) + 1$.
\end{corollary}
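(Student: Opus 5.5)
We apply Equation~\eqref{eq:crl_nonprime} with $r = (n-3)/2$, which is an integer because $n$ is odd. Then $n - r - 1 = (n+1)/2$, and the condition $r \leq (n-1)/2$ holds, so
\[
    crl\left(n,\frac{n+1}{2}\right) = crl\left(n,\frac{n-3}{2}\right) + \sum_{p \in P_{n,r}} \prod_{k=r+1}^{n-r-1} \binom{crl(k,r)+m_p(k)-1}{m_p(k)}.
\]
It therefore suffices to show that this sum equals exactly $1$.

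First we determine $P_{n,r}$. Every part lies in $\{r+1, r+2\} = \{(n-1)/2, (n+1)/2\}$. Since each part is at least $(n-1)/2$, three parts would sum to more than $n$ once $n \geq 3$, and a partition with only one part is impossible. So each partition in $P_{n,r}$ has exactly two parts, and the only two elements of $\{(n-1)/2,(n+1)/2\}$ summing to $n$ are one of each. Hence $P_{n,r} = \{\{(n-1)/2,(n+1)/2\}\}$, and the sum reduces to $crl((n-1)/2, r)\cdot crl((n+1)/2, r)$, because each multiplicity is $1$ and the binomial coefficient becomes $\binom{c}{1}=c$.

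Next we evaluate the two factors.
\begin{itemize}
    \item On $r+1 = (n-1)/2$ vertices, the only $r$-regular graph is $K_{r+1}$. It is connected and integral, so $crl((n-1)/2, r) = 1$.
    \item On $r+2 = (n+1)/2$ vertices, an $r$-regular graph has a complement that is $1$-regular, so the complement is a perfect matching. This requires $r+2$ to be even, which is where the hypothesis $n \equiv 3 \pmod 4$ enters: it gives $(n+1)/2$ even. The unique such graph is $\big((n+1)/4\big)K_2$ complemented, that is, the cocktail party graph $K_{2,2,\ldots,2}$. It is connected for $r \geq 1$ and integral, since its Laplacian spectrum is $0$, $r+2$, and $r$ by the complement relation. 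Hence $crl((n+1)/2, r) = 1$.
\end{itemize}
The product is therefore $1$, which gives the result.

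The main obstacle is the edge cases. When $n = 3$ we have $r = 0$, and then $K_{2,\ldots,2}$ on $2$ vertices is $2K_1$, which is disconnected. In that case the corresponding term is $crl(2,0) = 0$, and the identity $crl(3,2) = crl(3,0) + 1 = 2$ is false, since $crl(3,2)=1$. So the argument requires $n \geq 7$, where $r \geq 2$ makes both factors connected, and the case $n=3$ must be checked or excluded separately. For $n \geq 7$, the argument above goes through unchanged.
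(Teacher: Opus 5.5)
For $n \geq 7$ your argument is correct and is the same as the paper's. You apply Equation~\eqref{eq:crl_nonprime} with $r=(n-3)/2$ and show that $P_{n,r}$ consists only of $\{(n-1)/2,(n+1)/2\}$. You then use that $K_{r+1}$ and $K_{2,2,\ldots,2}$ are the unique connected integral $r$-regular graphs on $r+1$ and $r+2$ vertices.

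Your final paragraph is wrong, though: the corollary does \emph{not} fail at $n=3$.
\begin{itemize}
\item The only $0$-regular graph on $3$ vertices is $3K_1$, which is disconnected. So $crl(3,0)=0$, not $1$ (you appear to have used $rl(3,0)=1$). Hence $crl(3,0)+1=1=crl(3,2)$, and the identity holds.
\item What actually breaks at $n=3$ is your claim that three parts would sum to more than $n$ ``once $n\geq 3$''. That requires $3(n-1)/2>n$, i.e.\ $n>3$.
\item At $n=3$ the partition $\{1,1,1\}$ also lies in $P_{3,0}$. It contributes $\binom{crl(1,0)+2}{3}=\binom{3}{3}=1$; this term is $K_3=(3K_1)^{\textup{c}}$. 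Meanwhile $\{2,1\}$ contributes $crl(1,0)\,crl(2,0)=0$, so the sum in Equation~\eqref{eq:crl_nonprime} is still $1$.
\end{itemize}
Replace your ``check or exclude $n=3$'' remark with this computation, or with the direct observation that $K_3$ is the only connected $2$-regular graph on $3$ vertices. That completes the proof for every $n \equiv 3 \pmod 4$.

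The paper's own proof also tacitly assumes $n\geq 7$. Its claims that $P_{n,r}$ has a single member and that $K_{2,2,\ldots,2}$ is connected both fail at $n=3$, but the two errors cancel in the final count. So flagging the case was reasonable; only your conclusion about it was wrong.
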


\begin{proof}
    Equation~\eqref{eq:crl_nonprime} tells us that if $r = (n-3)/2$ then
    \begin{align}\begin{split}\label{eq:reg_mid_vals}
        crl\left(n,\frac{n+1}{2}\right) & = crl(n,n-r-1) \\
        & = crl(n,r) + \sum_{p \in P_{n,r}} \prod_{k=r+1}^{n-r-1} \binom{crl(k,r)+m_p(k)-1}{m_p(k)} \\
        & = crl\left(n,\frac{n-3}{2}\right) + \sum_{p \in P_{n,(n-3)/2}} \prod_{k=(n-1)/2}^{(n+1)/2} \binom{crl\left(k,\tfrac{n-3}{2}\right)+m_p(k)-1}{m_p(k)}.
    \end{split}\end{align}
    The only member of $P_{n,(n-3)/2}$ is the partition $\{(n-1)/2,(n+1)/2\}$, the only connected integral $r$-regular graph on $r+1 = (n-1)/2$ vertices is $K_{r+1}$, and the only connected integral $r$-regular graph on $r+2 = (n+1)/2$ vertices is $K_{2,2,\ldots,2}$. It follows that
    \[
        \sum_{p \in P_{n,(n-3)/2}} \prod_{k=(n-1)/2}^{(n+1)/2} \binom{crl\left(k,\tfrac{n-3}{2}\right)+m_p(k)-1}{m_p(k)} = \binom{crl\left(\tfrac{n-1}{2},\tfrac{n-3}{2}\right)}{1}\binom{crl\left(\tfrac{n+1}{2},\tfrac{n-3}{2}\right)}{1} = 1.
    \]
    Combining this fact with Equation~\eqref{eq:reg_mid_vals} gives the result.
\end{proof}

\begin{corollary}\label{cor:reg_middle_vals_prime}
    If $n \equiv 3 \pmod{4}$ is prime and $r = (n+1)/2$ then $crl(n,r) = 1$, and the unique connected integral $r$-regular order-$n$ graph is\[\big(K_{(n-1)/2} \sqcup K_{2,2,\ldots,2}\big)^{\textup{c}},\]where there are $(n+1)/4$ parts in the complete multipartite graph $K_{2,2,\ldots,2}$.
\end{corollary}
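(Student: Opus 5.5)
We plan to combine Corollary~\ref{cor:reg_middle_vals} with the vanishing statement in Corollary~\ref{cor:prime_reg_count_lap_int}. Write $n = 4m+3$, so that $r = (n+1)/2 = 2m+2$ and $(n-3)/2 = 2m$. Since $n \equiv 3 \pmod{4}$, Corollary~\ref{cor:reg_middle_vals} gives
\[
    crl\!\left(n,\tfrac{n+1}{2}\right) = crl\!\left(n,\tfrac{n-3}{2}\right) + 1.
\]
The task is therefore to show that $crl(n,(n-3)/2) = 0$ when $n$ is prime.

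To do this, we apply the second bullet point of Corollary~\ref{cor:reg_lap_prime_existence}. There $crl(n,r') > 0$ requires $r' > (n-1)/2$, and $(n-3)/2 < (n-1)/2$. Alternatively, we can argue directly. A connected $(n-3)/2$-regular graph on $n$ vertices has a complement that is $(n+1)/2$-regular. By Lemma~\ref{lem:int_eig_implies_Gc_disc}, if the original graph were integral, this complement would be disconnected. But each component of the complement would need at least $(n+3)/2$ vertices, and two such components would have more than $n$ vertices in total, which is impossible. Either route gives $crl(n,(n+1)/2) = 1$.

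For the explicit description, we reuse the proof of Corollary~\ref{cor:reg_middle_vals}. The extra $+1$ in that count comes from the unique partition $\{(n-1)/2,(n+1)/2\}$ in $P_{n,(n-3)/2}$. Its parts are filled by $K_{(n-1)/2}$, the unique connected $(n-3)/2$-regular graph on $(n-1)/2$ vertices, and by the cocktail party graph on $(n+1)/2$ vertices. The latter is $K_{2,2,\ldots,2}$ with $(n+1)/4$ parts, and this number is an integer because $n \equiv 3 \pmod 4$. Taking the complement of the disjoint union of these two graphs gives the stated graph.

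As a sanity check, we verify directly that this graph is integral and $(n+1)/2$-regular. A vertex in the $K_{(n-1)/2}$ part has degree $(n+1)/2$ in the complement. A vertex in the cocktail part has degree $(n-1)/2 + 1 = (n+1)/2$. Both $K_{(n-1)/2}$ and $K_{2,\ldots,2}$ are Laplacian integral, so the complement of their disjoint union is integral as well. The only real work is the vanishing step, which is immediate from the earlier corollaries, so there is no genuine obstacle.
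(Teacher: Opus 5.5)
Your proposal is correct and follows the paper's proof: you combine Corollary~\ref{cor:reg_middle_vals} with the vanishing of $crl(n,(n-3)/2)$ for prime $n$. The paper reads this vanishing off Equation~\eqref{eq:crl_recurrence}, where the complement degree $(n+1)/2$ satisfies $(n+1)/2 \geq 2\lfloor (n+1)/4 \rfloor$; that condition rests on the same component-size argument via Lemma~\ref{lem:int_eig_implies_Gc_disc} that you give directly.
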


\begin{proof}
    This follows from Corollary~\ref{cor:reg_middle_vals} via the fact that $crl(n,(n-3)/2) = 0$ when $n$ is prime (due to Equation~\eqref{eq:crl_recurrence} of Corollary~\ref{cor:prime_reg_count_lap_int}).
\end{proof}

As noted a few times already, $rl(n,0) = crl(n,n-1) = 1$ always holds, even if $n$ is not prime (the single $0$-regular integral graph on $n$ vertices is the empty graph, and the single connected $(n-1)$-regular graph on $n$ vertices is the complete graph). Similarly, $rl(n,1) = crl(n,n-2) = 1$ if $n$ is even and $rl(n,1) = crl(n,n-2) = 0$ if $n$ is odd, even if $n$ is not prime (the latter case follows from the handshaking lemma and the former case follows from the fact that $K_{2,2,\ldots,2}$ is the only $(n-2)$-regular $n$-vertex graph, and it is Laplacian integral).\footnote{We used this observation in the proof of Corollary~\ref{cor:reg_middle_vals}.} The following corollary extends this line of reasoning to the next level.

\begin{corollary}\label{cor:prime_reg_count_lap_int_2reg}
    Suppose $n \geq 5$ is an integer. Then
    \begin{align*}
        rl(n,2) = crl(n,n-3) & = \left(\left\lfloor \frac{n+3}{3}\right\rfloor - \left\lfloor \frac{n+1}{4}\right\rfloor\right)\left(\left\lfloor \frac{n+3}{3}\right\rfloor + \left\lfloor \frac{n+1}{4}\right\rfloor - \left\lfloor \frac{n+1}{2}\right\rfloor\right).
    \end{align*}
    Furthermore, the $2$-regular integral graphs and the connected $(n-3)$-regular integral graphs are exactly those of the form
    \[
        aC_3 \sqcup bC_4 \sqcup cC_6 \quad \text{and} \quad (aC_3 \sqcup bC_4 \sqcup cC_6)^{\textup{c}},
    \]
    respectively, where $(a,b,c) \in \Znn^3$ are such that $3a + 4b + 6c = n$.
\end{corollary}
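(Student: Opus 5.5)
The plan is to reduce everything to a counting problem for the triples $(a,b,c)$, and then verify the closed form. The main obstacle is the counting identity.

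\textbf{Step 1: structure.} A $2$-regular graph is a disjoint union of cycles, and its Laplacian spectrum is the union of the spectra of its components. So a $2$-regular graph is integral if and only if each cycle component is integral. Since the only connected $2$-regular integral graphs are $C_3$, $C_4$, and $C_6$ \cite{GM94}, the $2$-regular integral graphs of order $n$ are exactly $aC_3 \sqcup bC_4 \sqcup cC_6$ with $3a+4b+6c=n$. Distinct triples $(a,b,c)$ give non-isomorphic graphs, so $rl(n,2)$ is the number of such triples. Complementation preserves Laplacian integrality (the nonzero eigenvalues map as $\lambda \mapsto n-\lambda$) and sends $2$-regular graphs bijectively to $(n-3)$-regular ones, so $rl(n,2)=rl(n,n-3)$ and the $(n-3)$-regular integral graphs are the complements $(aC_3 \sqcup bC_4 \sqcup cC_6)^{\textup{c}}$.

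\textbf{Step 2: connectivity.} An $(n-3)$-regular graph is connected when $n \geq 5$. If it had two or more components, each component would have at least $n-2$ vertices, forcing $2(n-2) \leq n$, i.e.\ $n \leq 4$. This is also what Equation~\eqref{eq:crl_recurrenceb} gives for $r=2 \leq (n-1)/2$. Hence $crl(n,n-3)=rl(n,n-3)=rl(n,2)$.

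\textbf{Step 3: the count.} Let $N(n)=\#\{(a,b,c)\in\Znn^3 : 3a+4b+6c=n\}$. I would prove $N(n)$ equals the stated product by a quasi-polynomial argument.

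First, the generating function of $N$ is $1/((1-x^3)(1-x^4)(1-x^6))$. All poles are $12$-th roots of unity, and the only pole of order $3$ is at $x=1$. Consequently, on each residue class modulo $12$, $N(n)$ is a polynomial in $n$ of degree at most $2$, with leading coefficient $1/(2\cdot 3\cdot 4\cdot 6)=1/144$, the same for every class.

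Second, the right-hand side has the same shape. Write $n=12q+s$ with $0 \leq s < 12$. Each floor $\lfloor (n+3)/3\rfloor$, $\lfloor (n+1)/4\rfloor$, $\lfloor (n+1)/2\rfloor$ is then affine in $q$. The first factor is $n/12+O(1)$, and the second factor is $n/3+n/4-n/2+O(1)=n/12+O(1)$. So on each residue class the right-hand side is also a quadratic with leading term $n^2/144$.

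Third, on each class the difference of the two sides is therefore affine in $n$. It suffices to check two values per residue class, for instance $N(n)$ against the formula for $5 \leq n \leq 28$. These checks are routine hand or table computations; for example, $n=12$ gives the three triples $(4,0,0)$, $(0,3,0)$, $(2,0,1)$, matching $(5-3)(5+3-6)\cdot\tfrac{3}{4}$… recomputed carefully from the floors. If these two-point checks fail for small $n$, I would fall back on the more concrete decomposition $N(n)=\sum_{b}\big(\lfloor t_b/2\rfloor+1\big)$, where the sum runs over $b\geq 0$ with $t_b=(n-4b)/3$ a nonnegative integer, and evaluate that sum directly by splitting according to $n \bmod 12$.
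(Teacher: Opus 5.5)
Your Steps 1 and 2 match the paper's argument. Step 2 makes explicit the connectivity of $(n-3)$-regular graphs for $n\ge 5$, which is exactly where the hypothesis $n\geq 5$ is needed: $rl(4,2)=1$ but $crl(4,1)=0$.

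The genuine difference is the closed form. The paper simply quotes it from the OEIS entry for $1/((1-x^3)(1-x^4)(1-x^6))$ \cite{oeisA025828}, whereas you set out to prove it. Your quasi-polynomial argument is sound:
\begin{itemize}
\item The generating function is proper, and only its pole at $x=1$ has order $3$. So for each $0\le s<12$, the map $q\mapsto N(12q+s)$ is, for all $q\ge 0$, a polynomial of degree at most $2$ with leading coefficient $1$ (i.e.\ $n^2/144$).
\item The right-hand side is exactly $(q+\alpha_s)(q+\beta_s)$ for constants $\alpha_s,\beta_s$, because $12/3$, $12/4$ and $12/2$ are integers.
\item Hence the difference is affine in $q$. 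The range $5\le n\le 28$ meets each residue class twice, so $24$ evaluations finish the proof.
\end{itemize}
This gives a self-contained proof and explains why a product of two floor-affine factors can fit at all. The cost is a finite verification that the paper outsources to the OEIS.

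That verification, however, is not actually done, and your one sample is wrong. In fact $N(12)=4$, not $3$: you missed $(0,0,2)$, i.e.\ $2C_6$. The formula gives exactly $(5-3)(5+3-6)=4$, with no factor $\tfrac{3}{4}$. For $n=5,6,\ldots,28$ one gets
\[
N(n)=0,2,1,1,2,2,1,4,2,2,4,4,2,6,4,4,6,6,4,9,6,6,9,9,
\]
and the formula agrees with every one of these. The argument is complete once these values are actually recorded; your sum $\sum_b(\lfloor t_b/2\rfloor+1)$ is a convenient way to compute them.

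Finally, drop the ``fallback'' sentence. Both sides are computed exactly, so a failed two-point check would refute the identity rather than call for a different method.
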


\begin{proof}
    The formula is known to count the number of non-negative integer tuples $(a,b,c)$ for which $3a+4b+6c = n$ (see \cite{oeisA025828}), so it suffices to show that the only $2$-regular $n$-vertex graphs have the form $aC_3 \sqcup bC_4 \sqcup cC_6$ and the only connected $(n-3)$-regular $n$-vertex graphs have the form $(aC_3 \sqcup bC_4 \sqcup cC_6)^{\textup{c}}$.

    The former claim follows from the fact that the only connected $2$-regular integral graphs are $C_3$, $C_4$, and $C_6$ \cite{GM94}. The latter claim follows via complementation: $G$ is integral and $(n-3)$-regular if and only if $G^{\textup{c}}$ is integral and $2$-regular.
\end{proof}

In fact, it is known that, for every integer $r$, there are only finitely many connected $r$-regular integral graphs (on any number of vertices) \cite{Cve75}. It follows that formulas analogous to (but much uglier than) that of Corollary~\ref{cor:prime_reg_count_lap_int_2reg} are possible for $rl(n,r) = crl(n,n-r-1)$ for all fixed $r$. However, even when $r = 3$ (in which case there are $13$ connected $3$-regular integral graphs \cite{BC76}), this formula is extremely unwieldy to write down, and when $r \geq 4$ the characterization of connected $r$-regular integral graphs is not yet complete (see \cite{GKLZ20} and the references therein).

\section{Enumerating regular $\{-1,0,1\}$-diagonalizable graphs}\label{sec:prime_regular_negone}

We now explore what these results look like when further restricted to $\{-1,0,1\}$-diagonalizable regular graphs. To start, we recall that every $\{-1,0,1\}$-diagonalizable graph is Laplacian integral, from which it immediately follows that the only connected $2$-regular $\{-1,0,1\}$-diagonalizable graphs are $C_3$, $C_4$, and $C_6$. Similarly, the known $13$-graph characterization of $3$-regular integral graphs immediately gives the following exhaustive list of connected $3$-regular $\{-1,0,1\}$-diagonalizable graphs:

\begin{proposition}\label{prop:neg_one_3reg}
    There are exactly nine different connected $3$-regular $\{-1,0,1\}$-diagonalizable graphs:
    \begin{center}
        $K_4$, \ \ $K_{3,3}$, \ \ $C_3 \mathbin{\square} K_2$, \ \ $C_4 \mathbin{\square} K_2$, \ \ $C_6 \mathbin{\square} K_2$,\\
        the Petersen graph, the Desargues graph, the Nauru graph, and the Tutte $8$-cage.
    \end{center}
\end{proposition}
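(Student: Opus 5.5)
Since every $\{-1,0,1\}$-diagonalizable graph is Laplacian integral, and for regular graphs Laplacian integrality coincides with adjacency integrality, the connected $3$-regular $\{-1,0,1\}$-diagonalizable graphs form a subset of the $13$ connected cubic integral graphs of \cite{BC76}. These are $K_4$, $K_{3,3}$, $C_3 \mathbin{\square} K_2$, $C_4 \mathbin{\square} K_2$ (the cube), $C_6 \mathbin{\square} K_2$, the Petersen graph, the Desargues graph, the Nauru graph, the Tutte $8$-cage, and four further graphs of orders $10$, $20$, $20$, and $24$. These four are the non-Petersen cubic integral graph on $10$ vertices, the two non-Desargues order-$20$ cubic integral graphs, and one further order-$24$ graph. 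The proof then splits into two parts: show that the nine listed graphs are $\{-1,0,1\}$-diagonalizable, and show that the remaining four are not.

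For the positive direction, I would exhibit, for each of the nine graphs, an invertible $\{-1,0,1\}$-matrix $P$ whose columns are eigenvectors of $L(G)$. Several cases follow from structure already available. $K_4$ and $K_{3,3} = (K_3 \sqcup K_3)^{\textup{c}}$ come from the complement construction \eqref{eq:build_larger_G} with balanced vectors $(1,1,1,1)$ and $(3,3)$. The Cartesian products $C_k \mathbin{\square} K_2$ for $k \in \{3,4,6\}$ follow because $C_3$, $C_4$, $C_6$ and $K_2$ are $\{-1,0,1\}$-diagonalizable: the Kronecker product of the two diagonalizing matrices is again a $\{-1,0,1\}$ eigenbasis of $L(G \mathbin{\square} H) = L(G)\otimes I + I \otimes L(H)$, and it is invertible. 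For the Petersen, Desargues, Nauru and Tutte $8$-cage graphs, I would construct explicit eigenbases eigenspace by eigenspace. The natural sources of $\{-1,0,1\}$ eigenvectors are signed indicator vectors of short cycles, or differences of indicators of vertex subsets, for example $\pm$ on an alternating hexagon or on the two ends of a matching. I would then check that they span each eigenspace, by a computer rank check if necessary. The Desargues graph is the bipartite double of Petersen, so its eigenvectors can be built from Petersen's as $(\mathbf{x},\pm\mathbf{x})$. This reduces the work to Petersen, Nauru and Tutte $8$-cage.

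For the negative direction, I must show that no full $\{-1,0,1\}$ eigenbasis exists for each of the four excluded graphs. The cleanest approach is to find an eigenspace that is not spanned by $\{-1,0,1\}$ vectors. For each eigenvalue $\lambda$, enumerate all vectors in $\{-1,0,1\}^n \cap \ker(L-\lambda I)$, which is a finite computation (at most $3^{24}$, but far less with pruning), and check whether their span has full dimension. Where possible, a conceptual certificate in the spirit of the balanced-vector arguments of Propositions~\ref{prop:ab_balanced} and~\ref{prop:346_balanced} is preferable: show that every $\{-1,0,1\}$ vector in the eigenspace satisfies an extra linear constraint, for example equal entries on some orbit or a parity condition, so that these vectors lie in a proper subspace.

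I expect this negative direction to be the main obstacle, since a short human-readable certificate may not exist for the order-$20$ and order-$24$ graphs. In practice I would rely on the exhaustive computational search described in Appendix~\ref{sec:appendix_compute_lapint}.
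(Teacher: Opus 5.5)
Your overall strategy is the paper's. You reduce via \cite{BC76} to the $13$ connected cubic integral graphs, certify the nine listed ones, and rule out the remaining four by exhaustive eigenspace search. Your structural shortcuts in the positive half are correct and slightly improve on the paper, which just points to explicit diagonalizations in its repository:
\begin{itemize}
\item $K_4=(4K_1)^{\textup{c}}$ and $K_{3,3}=(K_3\sqcup K_3)^{\textup{c}}$ follow from balanced vectors.
\item The three prisms follow from $P_G\otimes P_H$.
\item The Desargues graph, as the bipartite double of Petersen, is diagonalized by $\bigl(\begin{smallmatrix} S & S\\ S & -S\end{smallmatrix}\bigr)$.
\end{itemize}
This leaves only Petersen, Nauru and the Tutte $8$-cage needing explicit computation.

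The genuine problem is your identification of the four excluded graphs: they do not have orders $10,20,20,24$. The paper's Table~\ref{tab:graph_counts_regular} gives $crl(10,3)=3$, $crl(12,3)=2$, $crl(20,3)=2$, and $crl(24,3)=crl(30,3)=1$. So the four leftover graphs have orders $10,10,12,20$. The Nauru graph is the only cubic integral graph on $24$ vertices, and there is only one order-$20$ cubic integral graph other than Desargues. Two concrete examples you miss:
\begin{itemize}
\item The truncated tetrahedron is a $12$-vertex cubic graph with adjacency spectrum $3,2^3,0^2,(-1)^3,(-2)^3$. It is not $C_6\mathbin{\square}K_2$ and appears nowhere in your accounting.
\item The bipartite complement of $C_4\sqcup C_6$ inside $K_{5,5}$ has biadjacency $J-B$ with singular values $3,2,1,1,0$. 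It is therefore a second non-Petersen cubic integral graph on $10$ vertices.
\end{itemize}
As you specified it, the negative half would look for graphs that do not exist and would never check two actual candidates, so ``exactly nine'' would remain unproved. The fix is to take the four remaining graphs directly from the list in \cite{BC76} and run the exhaustive $\{-1,0,1\}$ eigenspace check on those. With that correction your argument coincides with the paper's.
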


\begin{proof}
    Explicit $\{-1,0,1\}$-diagonalizations of these nine graphs are available at \cite{VaronaCode}. There are only four other connected integral $3$-regular graphs \cite{BC76} (and thus only four other connected $3$-regular graphs that could potentially be $\{-1,0,1\}$-diagonalizable); our code showed that they are not $\{-1,0,1\}$-diagonalizable.
\end{proof}

Our computational techniques (see Appendix~\ref{sec:appendix_compute_lapint}) are able to show that there are very few regular $\{-1,0,1\}$-diagonalizable graphs of small order. In particular, there are only $97$ connected regular $\{-1,0,1\}$-diagonalizable graphs on $15$ or fewer vertices---see Table~\ref{tab:reg_negone}.

\begin{table}[!htb]
    \centering
    \begin{tabular}{clc}\toprule
        \# of vertices ($n$) & connected regular $\{-1,0,1\}$-diagonalizable graphs & \# of graphs\\\toprule
        $1$, $2$, $3$, $5$, $7$, $11$, $13$ & $K_n$ only & $1$ \\\midrule
        $4$ & $K_4$, \ $C_4$ & $2$ \\\midrule
        $6$ & $K_6$, \ $K_{3,3}$, \ $K_{2,2,2}$, \ $C_6$, \ $K_3 \mathbin{\square} K_{2}$ & $5$ \\\midrule
        $8$ & $K_8$, \ $K_{4,4}$, \ $K_{2,2,2,2}$, \ $(C_4 \sqcup C_4)^\textup{c}$, \ $K_4 \mathbin{\square} K_2$, \ $(K_4 \mathbin{\square} K_2)^\textup{c}$ & $6$ \\\midrule
        $9$ & $K_9$, \ $K_{3,3,3}$, \ $K_3 \mathbin{\square} K_3$ & $3$ \\\midrule
        $10$ & $K_{10}$, \ $K_{5,5}$, \ $K_{2,2,2,2,2}$, \ $K_5 \mathbin{\square} K_2$, \ $(K_5 \mathbin{\square} K_2)^{\textup{c}}$, \ Petersen graph $P$, \ $P^{\textup{c}}$ & $7$ \\\midrule
        $12$ & Too many to list; see \cite{VaronaCode}. & $46$ \\\midrule
        $14$ & $K_{14}$, \ $K_{7,7}$, \ $K_{2,2,2,2,2,2,2}$, \ $K_7 \mathbin{\square} K_2$, \ $(K_7 \mathbin{\square} K_2)^{\textup{c}}$ & $5$ \\\midrule
        $15$ & $K_{15}$, \ $K_{5,5,5}$, \ $K_{3,3,3,3,3}$, \ $K_5 \mathbin{\square} K_3$, \ $(K_5 \mathbin{\square} K_3)^{\textup{c}}$, \ $\mathrm{Lin}(P)$, \ $(\mathrm{Lin}(P))^{\textup{c}}$, & $16$ \\
        & $(C_6 \sqcup 3K_3)^{\textup{c}}$, \ $\mathrm{Lin}(K_6)$, \ $(\mathrm{Lin}(K_6))^{\textup{c}}$, and 6 others (unnamed). & \\\bottomrule
    \end{tabular}
    \caption{All connected regular $\{-1,0,1\}$-diagonalizable graphs on $15$ or fewer vertices. Note that $\mathrm{Lin}(G)$ denotes the line graph of the graph $G$.}\label{tab:reg_negone}
\end{table}

Based on these computational results, we might be tempted to conjecture that $K_n$ is the only connected regular $\{-1,0,1\}$-diagonalizable graph of order $n$ when $n$ is prime. However, by making use of Corollary~\ref{cor:prime_reg_count_lap_int} (but restricting to just balanced multisets with parts between $r+1$ and $n-r-1$ inclusive, rather than all such integer partitions), we can see that this is false:

\begin{theorem}\label{thm:prime_regular_negone}
    Let $n$ be prime. There exists a regular $n$-vertex connected $\{-1,0,1\}$-diagonalizable graph other than $K_n$ if and only if $n \geq 31$.
\end{theorem}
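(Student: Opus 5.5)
By Theorem~\ref{thm:prime_main}, a connected regular $\{-1,0,1\}$-diagonalizable graph $G$ of prime order $n$ has the form $G=(G_1\sqcup\cdots\sqcup G_d)^{\textup{c}}$ with $d\ge 2$. Here every $G_j$ is a connected $r$-regular $\{-1,0,1\}$-diagonalizable graph for one common $r$, and $(|V(G_1)|,\ldots,|V(G_d)|)$ is balanced. Also $G\neq K_n$ exactly when $r\ge 1$. The plan is to settle the case $r=2$ completely, exclude odd $r$, and handle $r\ge 4$ with the enumeration in Table~\ref{tab:reg_negone}.

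\textbf{Odd $r$.} By the handshaking lemma each $|V(G_j)|$ is even. Then $n$ would be even, which is impossible for prime $n\ge 3$. The trivial cases $n=2,3$ are covered by Table~\ref{tab:reg_negone}.

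\textbf{$r=2$: reduction and sufficiency.} Since every $\{-1,0,1\}$-diagonalizable graph is Laplacian integral, each $G_j$ is $C_3$, $C_4$ or $C_6$, all of which are $\{-1,0,1\}$-diagonalizable. So $G$ exists if and only if $n=3n_3+4n_4+6n_6$ for some $(n_3,n_4,n_6)$ satisfying one of the cases of Proposition~\ref{prop:346_balanced}. The cases then go as follows.
\begin{itemize}
\item Case (a) forces $n$ to be a proper multiple of $3$, $4$ or $6$, so $n$ is not prime.
\item Case (b) makes $n$ even.
\item Case (c) makes $n$ divisible by $3$.
\end{itemize}
Only cases (d) and (e) remain. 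For sufficiency I use case (d), with $n=12+12+(3a'+4b')$ and $a',b'\ge 0$. Since the Frobenius number of $\{3,4\}$ is $5$, every integer at least $6$ is of the form $3a'+4b'$. Hence every $n\ge 30$ is representable, and in particular every prime $n\ge 31$; for example $31=3\cdot 5+4\cdot 4$. This gives the graph $(5C_3\sqcup 4C_4)^{\textup{c}}$ of order $31$ and its analogues for all larger primes, proving the ``if'' direction.

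\textbf{$r=2$: primes below $31$.} In case (d) the minimum value is $24$. In case (e) the minimum value is $6+12+6=24$. So only $n\in\{29\}$ needs checking in case (d), and $n\in\{23,29\}$ in case (e). A direct check of the few admissible triples shows none of them works. For instance, in case (d) we have $29-4b\in\{17,13,9,5\}$, and none of these equals $3a$ with $a\ge 4$.

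\textbf{$r\ge 4$ even, $n<31$ prime (expected main obstacle).} By Lemma~\ref{lem:balanced_max_entry}, each part satisfies $|V(G_j)|\le\lfloor n/2\rfloor\le 14$, and each part has at least $r+1\ge 5$ vertices. So the $G_j$ all appear in Table~\ref{tab:reg_negone}. I would list, for each even $r\ge 4$, the orders $k\le 14$ admitting a connected $r$-regular $\{-1,0,1\}$-diagonalizable graph. Examples are $K_5$ for $r=4$, together with $K_{2,2,2}$ and $K_3\mathbin{\square}K_3$ of orders $6$ and $9$; further examples include the relevant order-$8$, $10$ and $12$ graphs. For each prime $n\le 29$, I would then check that no balanced multiset of such orders sums to $n$, using Proposition~\ref{prop:ab_balanced} to test balancedness of the two-value multisets.

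This bookkeeping, especially with the $46$ graphs of order $12$, is the tedious step. It is most cleanly done by a finite computer search in the style of Corollary~\ref{cor:prime_reg_count_lap_int}, restricted to balanced multisets. I would also expect divisibility to kill most candidates by hand. For example, with $r=4$ and parts in $\{5,6,9,\ldots\}$, balancedness via Proposition~\ref{prop:ab_balanced} demands many parts, pushing $n$ past $29$.
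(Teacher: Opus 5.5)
Your setup, the odd-$r$ case, the treatment of $r=2$ via Proposition~\ref{prop:346_balanced}, and your Frobenius-number argument for the ``if'' direction are all correct. The gap is the case $r\ge 4$ with $n\in\{17,19,23,29\}$, which is the real content of the ``only if'' direction. You describe a search but never carry it out. The only balancedness test you name, Proposition~\ref{prop:ab_balanced}, applies only to multisets with two distinct part sizes. Those, and the one-valued ones, do fail for trivial reasons: a common divisor of the parts must divide the prime $n$, and if two sizes $a,b\ge 5$ are coprime then Proposition~\ref{prop:ab_balanced} forces $n\ge 2ab\ge 60$. But candidates with three or more distinct sizes are never ruled out. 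Examples are $\{5,6,8\}$ for $n=19$ (realized by $K_5$, $K_{2,2,2}$, $K_{4,4}$) and $\{5,6,8,10\}$ for $n=29$ (adding $(K_5\mathbin{\square}K_2)^{\textup{c}}$). Saying that divisibility should kill ``most'' candidates does not show that all of them fail. As written, nonexistence for $r\ge 4$ is asserted rather than proved.

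The missing idea, which is how the paper argues, is that you need no information about which regular graphs exist. It is enough that every part has at least $r+1\ge 5$ vertices. Then there are at most $\lfloor 29/5\rfloor=5$ parts. Primality of $n$ forces the balanced vector $\vv$ to have $\gcd$ equal to $1$. Primitive balanced vectors with at most five entries form a finite list, and the paper appeals to \cite[Table~1]{johnston2025laplacian}.

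You can also finish by hand. If $\mathrm{null}(A)=\mathrm{span}(\vv)$ for a $(d-1)\times d$ matrix $A$ with entries in $\{-1,0,1\}$, then $\vv$ is proportional to the vector of signed maximal minors of $A$. So a primitive $\vv$ has entries at most $1$, $2$, $4$ when $d=2,3,4$, which contradicts entries $\ge 5$. When $d=5$ the sum is at least $25$, so $n=29$ and $\vv$ is one of $(9,5,5,5,5)$, $(8,6,5,5,5)$, $(7,7,5,5,5)$, $(7,6,6,5,5)$, $(6,6,6,6,5)$. In each case, reduce $\mathbf{a}\cdot\vv=0$ modulo $5$ (or modulo $6$ for the last). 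This shows the $\{-1,0,1\}$-vectors orthogonal to $\vv$ span a space of dimension at most $3$, so none of these is balanced. The lower bound $5$ on the entries is genuinely needed here, not just the sum: $(7,4,3,2,1)$ is a primitive balanced vector with sum $17$.
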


\begin{proof}
    First, we show that if $n \geq 31$ is prime, then there exists a connected regular graph other than $K_n$ that is $\{-1,0,1\}$-diagonalizable.
    
    To this end, we notice that if $n \equiv 1 \pmod{3}$ then we can write $n = 3a + 4b$ for $a = (n-16)/3 \geq 3$ and $b = 4$, and if $n \equiv 2 \pmod{3}$ then we can write $n = 3a + 4b$ for $a = (n-20)/3 \geq 3$ and $b = 5$ (and $n \equiv 0 \pmod{3}$ is impossible since $n \geq 31 > 3$ is prime). In particular, this means (thanks to Proposition~\ref{prop:ab_balanced}) that the vector $(4\mathbf{1}_a,3\mathbf{1}_b)$ is balanced. It follows from Theorem~\ref{thm:prime_main} that the graph $G = (aC_4 \sqcup bC_3)^\textup{c}$ is $\{-1,0,1\}$-diagonalizable (since $C_3$ and $C_4$ both are). Since $C_3$ and $C_4$ are both regular, $G$ is $(n-3)$-regular.

    Next, we show that if $n < 31$ is prime and $G$ is a connected regular $\{-1,0,1\}$-diagonalizable graph, then $G = K_n$. All connected regular $\{-1,0,1\}$-diagonalizable graphs on $n$ vertices for $n \leq 15$ are tabulated in Table~\ref{tab:reg_negone} and \cite{VaronaCode}, and there are no counterexamples to the statement of the theorem for those values of $n$, so we assume for the remainder of the proof that $n \in \{17,19,23,29\}$.
    
    Suppose that $G^\textup{c}$ is $r$-regular (i.e., $G$ is $(n-r-1)$-regular). By the handshaking lemma, if $r$ is odd then $n$ is even, which contradicts our assumption that $n \in \{17,19,23,29\}$. If $r = 0$ then $G^\textup{c}$ has no edges, so $G = K_n$. All that remains is to rule out the possibility that $r \geq 2$ is even.\medskip

    \underline{Case 1: $r = 2$.} By Corollary~\ref{cor:prime_reg_count_lap_int_2reg}, $G = (aC_3 \sqcup bC_4 \sqcup cC_6)^{\textup{c}}$ for some $(a,b,c) \in \Znn^3$ such that $3a+4b+6c = n$. Furthermore, at least two of $a,b,c$ must be non-zero, since otherwise $n$ is a multiple of $3$, $4$, or $6$ (which contradicts $n \in \{17,19,23,29\}$). Since $(3\mathbf{1}_{a},4\mathbf{1}_{b},6\mathbf{1}_{c})$ must be balanced (by Theorem~\ref{thm:prime_main}), Proposition~\ref{prop:346_balanced} thus tells us that one of the following must happen:
    \begin{itemize}
        \item[a)] $a = 0$, $b \geq 3$, and $c \geq 2$. This implies $n = 3a+4b+6c = 4b+6c$ is even, which contradicts $n \in \{17,19,23,29\}$.

        \item[b)] $a \geq 2$, $b = 0$, $c \geq 1$. This implies $n = 3a+4b+6c = 3a+6c$ is a multiple of $3$, which contradicts $n \in \{17,19,23,29\}$.

        \item[c)] $a \geq 4$, $b \geq 3$, $c = 0$. This implies $n = 3a+4b+6c = 3a+4b \geq 12+12 = 24$, so $n = 29$. However, there does not exist an integer partition of $29$ into parts of size $3$ and $4$, with at least $4$ parts of size $3$ and at least $3$ parts of size $4$ (to see this, notice that any such partition must contain $\{3,3,3,3,4,4,4\}$ as a subset, which has a sum of $24$, and there is no way to use parts of size $3$ and $4$ to add up to the remaining $29 - 24 = 5$).

        \item[d)] $a \geq 2$, $b \geq 3$, $c \geq 1$. This implies $n = 3a+4b+6c \geq 6+12+6 = 24$, so $n = 29$. The same argument as in part~(c) shows that this is not possible: there does not exist a partition of $29$ into parts of size $3$, $4$, and $6$ that contains $\{3,3,4,4,4,6\}$ as a subset.
    \end{itemize}

    \underline{Case 2: $r \geq 4$.} Since each vertex of $G^\textup{c}$ has at least $4$ neighbors, each connected component of $G^\textup{c}$ must have at least $5$ vertices. Since $n \leq 29$ and $\lfloor 29/5 \rfloor = 5$, this implies that $G^{\textup{c}}$ has at most $5$ connected components. When combined with Theorem~\ref{thm:prime_main}, this tells us that
    \[
        G = (G_1 \sqcup G_2 \sqcup \cdots \sqcup G_d)^\textup{c}
    \]
    for some $d \in \{2,3,4,5\}$, where $\mathbf{v} = (|V(G_1)|, \ldots, |V(G_d)|) \in \Zp^d$ is balanced with $|V(G_1)| + \cdots + |V(G_d)| = n$.

    However, all $124$ balanced vectors with $2$, $3$, $4$, or $5$ positive integer entries and greatest common divisor equal to $1$ were tabulated in \cite[Table~1]{johnston2025laplacian}, and none of them have sum in the set $\{17,19,23,29\}$. Since these target values are all prime, no multiple of these $124$ vectors has sum in the set $\{17,19,23,29\}$ either, so $G$ does not exist when $n \in \{17,19,23,29\}$. This completes the $r \geq 4$ case and the proof.  
\end{proof}

When we combine the ideas of the proof of Theorem~\ref{thm:prime_regular_negone} with computer scripts to compute all balanced vectors satisfying the constraints of Proposition~\ref{prop:346_balanced}, we can compute all connected regular $\{-1,0,1\}$-diagonalizable graphs of prime order up to $53$; see Table~\ref{tab:regular_prime_101}.

\begin{longtable}{clc}\toprule
        \# of vertices ($n$) & connected regular $\{-1,0,1\}$-diagonalizable graphs & \# of graphs\\\toprule
        $2$--$29$ (prime) & $K_n$ only & $1$ \\\midrule
        $31$ & $K_{31}$, \ $(5C_3 \sqcup 4C_4)^\textup{c}$, \ $(3C_3 \sqcup 4C_4 \sqcup C_6)^\textup{c}$ & $3$ \\\midrule
        $37$ & $K_{37}$, \ $(7C_3 \sqcup 4C_4)^\textup{c}$, \ $(3C_3 \sqcup 4C_4 \sqcup 2C_6)^\textup{c}$, \ $(5C_3 \sqcup 4C_4 \sqcup C_6)^\textup{c}$ & $4$ \\\midrule
        $41$ & $K_{41}$, \ $(7C_3 \sqcup 5C_4)^\textup{c}$, \ $(3C_3 \sqcup 5C_4 \sqcup 2C_6)^\textup{c}$, \ $(5C_3 \sqcup 5C_4 \sqcup C_6)^\textup{c}$ & $4$ \\\midrule
        $43$ & $K_{43}$, \ $(5C_3 \sqcup 7C_4)^\textup{c}$, \ $(9C_3 \sqcup 4C_4)^\textup{c}$, \ $(3C_3 \sqcup 4C_4 \sqcup 3C_6)^\textup{c}$, & $7$\\
        & $(3C_3 \sqcup 7C_4 \sqcup C_6)^\textup{c}$, \ $(5C_3 \sqcup 4C_4 \sqcup 2C_6)^\textup{c}$, \ $(7C_3 \sqcup 4C_4 \sqcup C_6)^\textup{c}$ & \\\midrule
        $47$ & $K_{47}$, \ $(5C_3 \sqcup 8C_4)^\textup{c}$, \ $(9C_3 \sqcup 5C_4)^\textup{c}$, \ $(3C_3 \sqcup 5C_4 \sqcup 3C_6)^\textup{c}$, & $10$ \\
        & $(3C_3 \sqcup 8C_4 \sqcup C_6)^\textup{c}$, \ $(5C_3 \sqcup 5C_4 \sqcup 2C_6)^\textup{c}$, \ $(7C_3 \sqcup 5C_4 \sqcup C_6)^\textup{c}$, & \\
        & $(2K_5 \sqcup 2K_{2,2,2} \sqcup (K_3 \mathbin{\square} K_3) \sqcup 2K_{4,4})^\textup{c}$, & \\
        & $(2K_5 \sqcup 2K_{2,2,2} \sqcup (K_3 \mathbin{\square} K_3) \sqcup 2(K_4 \mathbin{\square} K_2))^\textup{c}$, & \\
        & $(2K_5 \sqcup 2K_{2,2,2} \sqcup (K_3 \mathbin{\square} K_3) \sqcup K_{4,4} \sqcup (K_4 \mathbin{\square} K_2))^\textup{c}$ & \\\midrule
        $53$ & $K_{53}$, $(7C_3 \sqcup 8C_4)^\textup{c}$, $(11C_3 \sqcup 5C_4)^\textup{c}$, $(3C_3 \sqcup 5C_4 \sqcup 4C_6)^\textup{c}$, & $62$ \\
        & $(3C_3 \sqcup 8C_4 \sqcup 2C_6)^\textup{c}$, $(5C_3 \sqcup 5C_4 \sqcup 3C_6)^\textup{c}$, $(5C_3 \sqcup 8C_4 \sqcup C_6)^\textup{c}$, & \\
        & $(7C_3 \sqcup 5C_4 \sqcup 2C_6)^\textup{c}$, $(9C_3 \sqcup 5C_4 \sqcup C_6)^\textup{c}$, & \\
        & $(K_5 \sqcup 2K_{2,2,2} \sqcup K_{4,4} \sqcup 2(K_3 \mathbin{\square} K_3) \sqcup (K_5 \mathbin{\square} K_2)^\textup{c})^\textup{c}$, & \\
        & $(K_5 \sqcup 2K_{2,2,2} \sqcup (K_4 \mathbin{\square} K_2) \sqcup 2(K_3 \mathbin{\square} K_3) \sqcup (K_5 \mathbin{\square} K_2)^\textup{c})^\textup{c}$, & \\
        & $(2K_5 \sqcup K_{2,2,2} \sqcup 2K_{4,4} \sqcup (K_3 \mathbin{\square} K_3) \sqcup G_{12,4})^\textup{c}$, & \\
        & $(2K_5 \sqcup K_{2,2,2} \sqcup 2(K_4 \mathbin{\square} K_2) \sqcup (K_3 \mathbin{\square} K_3) \sqcup G_{12,4})^\textup{c}$, & \\
        & $(2K_5 \sqcup K_{2,2,2} \sqcup K_{4,4} \sqcup (K_4 \mathbin{\square} K_2) \sqcup (K_3 \mathbin{\square} K_3) \sqcup G_{12,4})^\textup{c}$, & \\
        & $(2K_5 \sqcup 2K_{2,2,2} \sqcup 2K_{4,4} \sqcup \mathrm{Lin}(P))^\textup{c}$, & \\
        & $(2K_5 \sqcup 2K_{2,2,2} \sqcup 2(K_4 \mathbin{\square} K_2) \sqcup \mathrm{Lin}(P))^\textup{c}$, & \\
        & $(2K_5 \sqcup 2K_{2,2,2} \sqcup K_{4,4} \sqcup (K_4 \mathbin{\square} K_2) \sqcup \mathrm{Lin}(P))^\textup{c}$, & \\
        & $(2K_5 \sqcup 2K_{2,2,2} \sqcup (K_3 \mathbin{\square} K_3) \sqcup (K_5 \mathbin{\square} K_2)^\textup{c} \sqcup G_{12,4})^\textup{c}$, & \\
        & $(2K_5 \sqcup 3K_{2,2,2} \sqcup 2K_{4,4} \sqcup (K_3 \mathbin{\square} K_3))^\textup{c}$, & \\
        & $(2K_5 \sqcup 3K_{2,2,2} \sqcup 2(K_4 \mathbin{\square} K_2) \sqcup (K_3 \mathbin{\square} K_3))^\textup{c}$, & \\
        & $(2K_5 \sqcup 3K_{2,2,2} \sqcup K_{4,4} \sqcup (K_4 \mathbin{\square} K_2) \sqcup (K_3 \mathbin{\square} K_3))^\textup{c}$, & \\
        & $(2K_5 \sqcup 4K_{2,2,2} \sqcup (K_3 \mathbin{\square} K_3) \sqcup (K_5 \mathbin{\square} K_2)^\textup{c})^\textup{c}$, & \\
        & $(3K_5 \sqcup 2K_{2,2,2} \sqcup K_{4,4} \sqcup 2(K_3 \mathbin{\square} K_3))^\textup{c}$, & \\
        & $(3K_5 \sqcup 2K_{2,2,2} \sqcup (K_4 \mathbin{\square} K_2) \sqcup 2(K_3 \mathbin{\square} K_3))^\textup{c}$, & \\
        & $(3K_5 \sqcup 2K_{2,2,2} \sqcup 2K_{4,4} \sqcup (K_5 \mathbin{\square} K_2)^\textup{c})^\textup{c}$, & \\
        & $(3K_5 \sqcup 2K_{2,2,2} \sqcup 2(K_4 \mathbin{\square} K_2) \sqcup (K_5 \mathbin{\square} K_2)^\textup{c})^\textup{c}$, & \\
        & $(3K_5 \sqcup 2K_{2,2,2} \sqcup K_{4,4} \sqcup (K_4 \mathbin{\square} K_2) \sqcup (K_5 \mathbin{\square} K_2)^\textup{c})^\textup{c}$, & \\
        & $(4K_5 \sqcup 2K_{2,2,2} \sqcup (K_3 \mathbin{\square} K_3) \sqcup G_{12,4})^\textup{c}$, & \\
        & $(4K_5 \sqcup 4K_{2,2,2} \sqcup (K_3 \mathbin{\square} K_3))^\textup{c}$, & \\
        & $(5K_5 \sqcup 2K_{2,2,2} \sqcup 2K_{4,4})^\textup{c}$, & \\
        & $(5K_5 \sqcup 2K_{2,2,2} \sqcup 2(K_4 \mathbin{\square} K_2))^\textup{c}$, & \\
        & $(5K_5 \sqcup 2K_{2,2,2} \sqcup K_{4,4} \sqcup (K_4 \mathbin{\square} K_2))^\textup{c}$ & \\\bottomrule
    \caption{A list of all connected regular $\{-1,0,1\}$-diagonalizable graphs of small prime orders. Each $53$-vertex graph that contains $G_{12,4}$ in its decomposition is actually $7$ different graphs; $G_{12,4}$ can be any of the $7$ different connected $12$-vertex $4$-regular $\{-1,0,1\}$-diagonalizable graphs (see \cite{VaronaCode}). The line graph of the Petersen graph is denoted by $\mathrm{Lin}(P)$.}\label{tab:regular_prime_101}
\end{longtable}

It is perhaps worth noting that all prime-order connected regular $\{-1,0,1\}$-diagonalizable graphs on $29$ or fewer vertices are $K_n$, which can be seen as a degree condition: all such graphs on $29$ or fewer vertices have degree $n-1$. Similarly, all such prime-order graphs on $43$ or fewer vertices have degree at least $n-3$, and all such prime-order graphs on $59$ or fewer vertices have degree at least $n-5$. In general, the restriction that the sizes of the connected components form a balanced vector makes it so that the degree of a connected regular $\{-1,0,1\}$-diagonalizable graph cannot be much smaller than the order of the graph.

\section{$\{-1,0,1\}$-diagonalizable graphs of large order}\label{sec:asymp}

We now consider the problem of bounding the number of connected $\{-1,0,1\}$-diagonalizable graphs when $n$ (the number of vertices) is large. Following the notation of Corollary~\ref{cor:prime_count_nozo_int_b}, we let $cs(n)$ denote the number of connected $\{-1,0,1\}$-diagonalizable graphs of order $n$. The subexponential lower bound $cs(n) \geq 13^{\sqrt{n}}$ when $n$ is sufficiently large was proved in \cite[Corollary~4]{johnston2025laplacian}. We now improve this result by proving an exponential lower bound:

\begin{theorem}\label{thm:nozo_lower_bound}
    For all sufficiently large $n$, we have $cs(n) \geq (7/4)^n$.
\end{theorem}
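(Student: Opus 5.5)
The plan is to bootstrap from the known small values of $cs$ using the complement-of-disjoint-union construction. By \cite[Corollary~2]{johnston2025laplacian}, if $G_1,\ldots,G_d$ are connected $\{-1,0,1\}$-diagonalizable graphs whose orders form a balanced vector, then $(G_1\sqcup\cdots\sqcup G_d)^{\textup{c}}$ is connected and $\{-1,0,1\}$-diagonalizable. Non-isomorphic multisets of components give non-isomorphic graphs, since the components are recovered from $G^{\textup{c}}$. So any family of balanced order-vectors yields a lower bound of the form of Theorem~\ref{thm:counting_graphs}.

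The obstacle to naive use is that fixed small components only give partition-type (subexponential) counts. To get exponential growth I will make a recursion that squares or cubes counts while only multiplying the order by a constant. The cleanest candidate is the order-vector $(m,m,w_1,\ldots,w_t)$ with $w_1+\cdots+w_t=m$ and $(w_1,\ldots,w_t)$ itself balanced.

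Balancedness of this vector should follow from \cite[Proposition~3]{johnston2025laplacian}, in the same gluing style as the proof of Proposition~\ref{prop:346_balanced}:
\begin{itemize}
\item the block $(m,m)$ is balanced via the row $(1,-1)$;
\item the block $w$ is balanced by assumption;
\item the gluing row $(1,0,-1,\ldots,-1)$ satisfies $m\cdot 1=w\cdot\mathbf 1=m\neq 0$.
\end{itemize}
Stacking these rows gives $2m$-ish independent rows killing only $(m,m,w)$. Choosing $G_1,G_2$ to be any two (possibly equal) connected $\{-1,0,1\}$-diagonalizable graphs of order $m$, and the $w$-part to be any $\{-1,0,1\}$-diagonalizable graph of order $m$ whose component sizes are balanced, gives
\[
cs(3m)\ \geq\ \binom{cs(m)+1}{2}\, b(m),
\]
where $b(m)$ counts order-$m$ graphs with connected $\{-1,0,1\}$-diagonalizable components of balanced sizes. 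In particular $b(m)\ge cs(m)$, obtained by taking $t=1$; the all-ones case $w=\mathbf 1_m$ is also allowed. Hence $cs(3m)\geq cs(m)^3/2$.

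Writing $cs(m)\geq c^{m}$, this recursion preserves the exponential rate up to the harmless factor $1/2$, which is absorbed if $c$ is taken slightly above $7/4$ at the base. For orders not of the form $3m$, I will pad with extra singleton components or mix two base orders. For example, I will use vectors $(m,m,w)$ where $w$ sums to $m$ but some $K_1$'s are appended, checking balancedness again via a gluing row. Alternatively I will use $(m,m',w)$ with $m'\in\{m,m+1\}$ and $w$ chosen to sum to $m$ and to $m'$ on sub-blocks. The aim is a recursion $cs(n)\geq \tfrac12 cs(\lfloor n/3\rfloor)^3\cdot C$ valid for every large $n$, with a fixed constant $C>0$.

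The main obstacle is the base case: I need some $m$ (or a window of consecutive $m$'s to handle all residues) with $cs(m)^{1/m}$ comfortably exceeding $7/4$. This must hold after accounting for the factor $1/2$ and the padding losses, which, iterated, contribute an exponent decaying like $\sum_j 3^{-j}$ times $\log 2/m$. I would first try the computed values of $cs(m)$ for $m\le 12$ from Table~\ref{tab:graph_counts_thirteen}. If they fall short of $(7/4)^m$, I would strengthen the recursion by replacing the factor $cs(m)$ coming from the $w$-part with the much larger $b(m)$, i.e.\ using all balanced multisets. That gives $cs(3m)\gtrsim cs(m)^2 b(m)/2$ and effectively lets the base rate be $\bigl(cs(m)^2 b(m)\bigr)^{1/(3m)}$. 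Verifying that this exceeds $7/4$ at some computed $m$ (and adjacent $m$ for the other residues) is the numerical crux of the argument.
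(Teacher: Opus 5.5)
Your gluing argument for the balancedness of $(m,m,w)$ is fine, and a tripling recursion could in principle replace the paper's doubling one. The proof as proposed, however, does not reach $7/4$: the step you call the ``numerical crux'' fails, and your proposed fix cannot rescue it.

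A fixed-shape product recursion such as $cs(3m)\ge cs(m)^3/c$ only transports the exponential rate of its seed; iterating it gives a limiting rate of about $(cs(m)/\sqrt{c})^{1/m}$, never more. With the data you plan to use, $\max_{m\le 13} cs(m)^{1/m}$ is attained at $m=12$ and equals $263^{1/12}\approx 1.59$ (compare $26^{1/8}\approx 1.50$ and $124^{1/13}\approx 1.45$).

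Replacing one factor $cs(m)$ by $b(m)$ does not help. A graph counted by $b(m)$ with at least two components has a connected $\{-1,0,1\}$-diagonalizable complement by \cite[Corollary~2]{johnston2025laplacian}, and complementation is injective. Hence $b(m)\le 2\,cs(m)$, not ``much larger,'' and $(cs(m)^2b(m))^{1/(3m)}\le 2^{1/(3m)}cs(m)^{1/m}\approx 1.62$ at best. So no $m\le 13$ gives your base case.

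The missing idea is how to obtain a seed at a much larger order. The paper defines $f$ by the full recurrence~\eqref{eq:cu_recurrence} for every $n\ge 14$, summing over \emph{all} balanced multisets in $B_n$ rather than a single shape. This is a valid lower bound for $cs(n)$ for all $n$, prime or not, and it is seeded with $cs(k)$ for $k\le 13$. The paper then computes $f(100)\approx 9.09\times 10^{24}$, so $(f(100)/2)^{1/100}\approx 1.76>7/4$. Aggregating over many shapes at every intermediate level is what lifts the rate from about $1.5$ to $1.76$; the paper notes that $\ell=12$ gives only $\approx 1.50$. Only after that does a single-shape recursion take over: the paper uses $\{\ell,\ell\}$ to get $f(2^s\ell)\ge 2(f(\ell)/2)^{2^s}$.

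Two smaller issues:
\begin{enumerate}[label={(\roman*)}]
\item When the $w$-part is a single order-$m$ component, the three order-$m$ components are interchangeable. That term should be $\binom{cs(m)+2}{3}$ rather than $\binom{cs(m)+1}{2}cs(m)$, giving $cs(m)^3/6$ instead of $cs(m)^3/2$; this is harmless for the rate.
\item Padding by a bounded number of $K_1$'s generally destroys balancedness. For example, $(m,m,m,1)$ is not balanced for $m\ge 2$ by Proposition~\ref{prop:ab_balanced}, so the uniform recursion $cs(n)\ge C\,cs(\lfloor n/3\rfloor)^3$ is not available as sketched. The paper avoids this by padding only once, with the balanced vector $(t\mathbf{1}_q,\mathbf{1}_r)$ where $t=2^s\ell\approx\sqrt{n}$ and $t\le r<2t$, at a subexponential cost of $e^{O(\sqrt{n}\log n)}$.
\end{enumerate}
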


\begin{proof}
    Let $f : \Zp \rightarrow \Zp$ be the function satisfying the recurrence~\eqref{eq:cu_recurrence} for all (not just prime) $n \geq 14$, and which has $f(n) = cs(n)$ for $n \in \{1,2,\ldots,13\}$. Then $cs(n) \geq f(n)$ for all $n \in \Zp$ since $f$ counts the number of connected $\{-1,0,1\}$-diagonalizable graphs of a certain type (i.e., the ones that can be written in the form $(G_1 \sqcup G_2 \sqcup \cdots \sqcup G_d)^\textup{c}$ for some $d \geq 2$ and connected $\{-1,0,1\}$-diagonalizable graphs $G_1, G_2, \ldots, G_d$ either also being of this type or being on at most $13$ vertices). We will show that $f(n) \geq (7/4)^n$ for all sufficiently large $n$, from which the theorem follows.

    To start, fix $\ell \in \Zp$. Then the recurrence~\eqref{eq:cu_recurrence} tells us that
    \[
        f(2\ell) = \sum_{p \in B_{2\ell} \setminus \{2\ell\}} \prod_{k=1}^{\ell} \binom{f(k)+m_p(k)-1}{m_p(k)}.
    \]
    Since the multiset $p = \{\ell,\ell\} \in B_{2\ell} \setminus \{2\ell\}$ is balanced (via the matrix $A = [1 \ \ -1]$) and has
    \[
        \prod_{k=1}^{\ell} \binom{f(k)+m_p(k)-1}{m_p(k)} = \binom{f(\ell)+2-1}{2} = \binom{f(\ell)+1}{2} \geq \frac{(f(\ell))^2}{2},
    \]
    it follows that $f(2\ell) \geq (f(\ell))^2 / 2$. Iterating this recurrence shows that
    \begin{align}\label{ineq:nonz_2sl_bound}
        f(2^s\ell) \geq 2\left(\frac{f(\ell)}{2}\right)^{2^s}
    \end{align}
    for all $s \in \Zp$. This gets us an exponential lower bound on $f(n)$ when $n = 2^s\ell$ for some $s \in \Zp$; our next goal is to generalize this exponential lower bound to \emph{all} sufficiently large $n$.

    To this end, suppose $n \geq 4\ell^2$, let $s \in \Zp$ be the largest integer for which $n \geq 4^s\ell^2$, and define $t = 2^s\ell$. Write $n = qt + r$, where $q,r \in \Zp$ are such that $t \leq r < 2t$ (i.e., $q = \lfloor n/t \rfloor - 1$ and $r = n - qt$). Then recurrence~\eqref{eq:cu_recurrence} tells us that
    \begin{align}\label{eq:nozo_rec_again}
        f(n) = \sum_{p \in B_{n} \setminus \{n\}} \prod_{k=1}^{\lfloor n/2 \rfloor} \binom{f(k)+m_p(k)-1}{m_p(k)}.
    \end{align}
    Proposition~\ref{prop:ab_balanced} tells us that $p = (t\mathbf{1}_q,\mathbf{1}_r)$ (written in vector form instead of multiset form) is balanced. The sum of its entries is $qt + r = n$, and the associated term in the recurrence~\eqref{eq:nozo_rec_again} is
    \begin{align}\label{ineq:nonz_binom_bound}
        \prod_{k=1}^{\lfloor n/2 \rfloor} \binom{f(k)+m_p(k)-1}{m_p(k)} = \binom{f(t)+q-1}{q}\binom{f(1)+r-1}{r} = \binom{f(t)+q-1}{q} \geq \frac{(f(t))^q}{q!},
    \end{align}
    where the final inequality follows from the fact that $(f(t) + q - 1)! / (f(t) - 1)! \geq (f(t))^q$. Combining Inequalities~\eqref{ineq:nonz_2sl_bound} and~\eqref{ineq:nonz_binom_bound} with the recurrence~\eqref{eq:nozo_rec_again} shows that
    \begin{align}\label{ineq:nonz_combined}
        f(n) \geq \frac{(f(t))^q}{q!} \geq \frac{\big(2(f(\ell) / 2)^{2^s}\big)^q}{q!} = \frac{2^q(f(\ell)/2)^{2^s q}}{q!}.
    \end{align}

    Since $s$ is the largest integer for which $n \geq 4^s\ell^2 = t^2$, we have $\sqrt{n}/2 < t \leq \sqrt{n}$. Since $q = \lfloor n/t \rfloor - 1$, this implies $\sqrt{n} - 2 \leq q \leq \lfloor 2\sqrt{n} \rfloor$. Similarly, $q \geq n/t - 2$, so multiplying through by $2^s = t/\ell$ gives $2^s q \geq (n - 2t)/\ell \geq (n - 2\sqrt{n})/\ell$. Plugging these bounds on $q$ and $2^s q$ into Inequality~\eqref{ineq:nonz_combined} gives
    \begin{align}\label{eq:nonz_lb_gell_cell}
        f(n) \geq \frac{2^{\sqrt{n} - 2}}{(\lfloor 2\sqrt{n}\rfloor)!} \left(\sqrt[\ell]{f(\ell)/2}\right)^{n - 2\sqrt{n}} = g_{\ell}(n)c_\ell^n,
    \end{align}
    where
    \[
        g_{\ell}(n) = \frac{2^{\sqrt{n} - 2}}{c_{\ell}^{2\sqrt{n}}(\lfloor 2\sqrt{n}\rfloor)!} \quad \text{and} \quad c_\ell = \sqrt[\ell]{f(\ell)/2}.
    \]
    By using the approximation $(\lfloor 2\sqrt{n}\rfloor)! \leq (2\sqrt{n})^{2\sqrt{n}}$ and taking logarithms, we see that
    \begin{align}\label{ineq:log_gell}
        \ln\big(g_{\ell}(n)\big) \geq (\sqrt{n} - 2)\ln(2) - 2\sqrt{n}\ln(c_{\ell}) - 2\sqrt{n}\ln(2\sqrt{n}).
    \end{align}
    In particular, since $\sqrt{n}\ln(n)$ grows sub-linearly, Inequality~\eqref{ineq:log_gell} implies that, for all $\delta > 0$, we have $\ln(g_{\ell}(n)) \geq -\delta n$ for sufficiently large $n$. Taking exponentials of both sides then shows that
    \[
        g_{\ell}(n) \geq \big(e^{-\delta}\big)^n
    \]
    for all sufficiently large $n$. Since $e^{-\delta} < 1$ but can be made arbitrarily close to $1$ (by choosing $\delta$ sufficiently close to $0$), plugging back into Inequality~\eqref{eq:nonz_lb_gell_cell} then shows that, for all $0 < b < \sqrt[\ell]{f(\ell)/2}$,\footnote{In particular, the conversion between $\delta$ and $b$ is $b = e^{-\delta}\sqrt[\ell]{f(\ell)/2}$.} we have
    \begin{align}\label{ineq:nonz_general_exp_lb}
        f(n) \geq b^n
    \end{align}
    for all sufficiently large $n$. This demonstrates exponential growth of $f(n)$ (and thus $cs(n)$) as long as $\ell$ is chosen so that $\sqrt[\ell]{f(\ell)/2} > 1$ (i.e., $f(\ell) > 2$).

    It now suffices to compute $f(\ell)$ for some choice of $\ell$. When $\ell = 100$, we computed all members of $B_{100}$ and then used the recursion~\eqref{eq:nozo_rec_again} to compute
    \[
        f(100) = 9,092,715,187,206,430,468,500,965.
    \]
    It follows that $f(n) \geq b^n$ for every $b$ satisfying $0 < b < \sqrt[100]{f(100)/2} \approx 1.76$. In particular, we can choose $b = 7/4 = 1.75$ to complete the proof.
\end{proof}

We note that the base $7/4$ in Theorem~\ref{thm:nozo_lower_bound} is not optimal, and a more careful (and computationally expensive) argument could likely increase it a fair bit. In particular, we used $\ell = 100$ at the end of the proof because large values of $\ell$ seem to result in larger values of the base $\sqrt[\ell]{f(\ell)/2}$. If we just used $\ell = 12$, for example, then we would have gotten a weaker theorem: $f(12) = 263$ (see Table~\ref{tab:graph_counts_thirteen}), so $f(n) \geq b^n$ (when $n$ is sufficiently large) for every $b$ satisfying $0 < b < \sqrt[12]{f(12)/2} \approx 1.501679$.

Since $\{-1,0,1\}$-diagonalizable graphs are Laplacian integral, Theorem~\ref{thm:nozo_lower_bound} immediately implies that $cl(n) \geq (7/4)^n$ when $n$ is large as well. However, a much better bound on the number of connected Laplacian integral graphs presents itself when we consider that every cograph is Laplacian integral, and that the number of cographs (and thus the number of connected Laplacian integral graphs) of order $n$ is bounded below by $3.5608^n$ when $n$ is large \cite{ravelomanana2001asymptotic}.

\section{Bipartite $\{-1,0,1\}$-diagonalizable graphs}\label{sec:bipartite}

Our final main result shows that connected $\{-1,0,1\}$-diagonalizable graphs that are bipartite must be regular. This drastically reduces the search space for connected bipartite $\{-1,0,1\}$-diagonalizable graphs.

\begin{theorem}\label{thm:bipartite_imp_regular}
	Let $G$ be a connected bipartite graph. If $G$ is $\{-1,0,1\}$-diagonalizable, then $G$ is regular.
\end{theorem}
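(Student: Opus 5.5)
The plan is to transfer the problem from the Laplacian to the signless Laplacian, where Perron--Frobenius theory applies, and then use the fact that a simple eigenvalue's eigenspace must be spanned by one of the $\{-1,0,1\}$ basis vectors. Let the bipartition of $G$ be $V(G) = X \sqcup Y$. Let $S$ be the diagonal signing matrix with $S_{ii} = 1$ for $i \in X$ and $S_{ii} = -1$ for $i \in Y$, so $S = S^{-1}$. Since every edge of $G$ joins $X$ to $Y$, conjugation by $S$ flips the sign of $A(G)$ and leaves $D(G)$ fixed:
\[
    S L(G) S = D(G) + A(G) =: Q(G).
\]
In particular, $\mathbf{v}$ is an eigenvector of $L(G)$ with eigenvalue $\mu$ if and only if $S\mathbf{v}$ is an eigenvector of $Q(G)$ with the same eigenvalue. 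Moreover, $S$ maps $\{-1,0,1\}$-vectors to $\{-1,0,1\}$-vectors.

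Next, apply Perron--Frobenius to $Q(G)$. This matrix is entrywise nonnegative, and it is irreducible because $G$ is connected. Hence its largest eigenvalue $\rho$ is simple, and its eigenspace is spanned by a vector $\mathbf{x}$ with strictly positive entries. By the similarity above, $\rho$ is also a simple eigenvalue of $L(G)$, with eigenspace $\mathrm{span}(S\mathbf{x})$.

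Now use the $\{-1,0,1\}$-diagonalizability from Definition~\ref{defn:negoneone_diag}. Take a basis of eigenvectors of $L(G)$ with entries in $\{-1,0,1\}$. Each basis vector lies in a single eigenspace, and the basis spans $\R^n$. So at least one basis vector $\mathbf{w}$ lies in the $\rho$-eigenspace, and since that eigenspace is one-dimensional, $\mathbf{w}$ is a nonzero multiple of $S\mathbf{x}$. Then $S\mathbf{w}$ is a $\{-1,0,1\}$-vector that is a nonzero multiple of the strictly positive vector $\mathbf{x}$. Its entries are therefore all nonzero and all of the same sign, so $S\mathbf{w} = \pm\mathbf{1}$. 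Hence $\mathbf{1}$ is a Perron eigenvector of $Q(G)$.

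Finally, read off regularity. Since $Q(G)\mathbf{1} = D(G)\mathbf{1} + A(G)\mathbf{1}$ has $i$-th entry $2\deg(i)$, the equation $Q(G)\mathbf{1} = \rho\mathbf{1}$ gives $\deg(i) = \rho/2$ for every vertex $i$, so $G$ is regular. The main point to check carefully is the simplicity of the top eigenvalue via Perron--Frobenius, which needs irreducibility of $Q(G)$ and therefore uses connectivity. The rest is bookkeeping: the signing similarity, and the observation that a one-dimensional eigenspace must contain a basis vector. Note that no orthogonality of the eigenbasis is needed anywhere.
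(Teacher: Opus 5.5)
Your proposal is correct and follows essentially the same route as the paper: conjugate $L(G)$ by the diagonal bipartition signing to obtain the signless Laplacian, use Perron--Frobenius (via connectivity) to get a simple top eigenvalue with a strictly positive eigenvector, force the $\{-1,0,1\}$ eigenbasis vector in that one-dimensional eigenspace to be $\pm$ the signing vector, and read off that every degree equals $\rho/2$. The only cosmetic differences are that you read off the degrees from $Q(G)\mathbf{1} = \rho\mathbf{1}$ rather than from $L(G)\mathbf{z} = \lambda\mathbf{z}$, and you spell out why some basis vector must lie in the $\rho$-eigenspace.
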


\begin{proof}
	Let $V(G) = A \sqcup B$ be a bipartition of $G$, where $|A| = m_1$ and $|B| = m_2$ (so that $|V(G)| = m_1 + m_2$). Let $\mathbf{z} = (z_1, z_2, \ldots, z_{m_1+m_2})$, where
	\[
		z_i = \begin{cases}
			1, & i \in A,\\
			-1, & i \in B.
		\end{cases}
	\]
	Our first goal is to show that $\mathbf{z}$ is an eigenvector of the Laplacian matrix $L(G) = D(G) - A(G)$.
	
	To this end, let $Z = \operatorname{diag}(\mathbf{z})$. Since $G$ is bipartite, every edge has one endpoint in $A$ and the other in $B$. It follows that $ZD(G)Z = D(G)$ and $ZA(G)Z = -A(G)$, so $Q := ZL(G)Z = Z(D(G) - A(G))Z = D(G) + A(G)$ is the signless Laplacian of $G$. Therefore, $Q$ is similar to $L(G)$, so $Q$ and $L(G)$ have the same eigenvalues with the same algebraic multiplicities.

	The matrix $Q$ is entrywise nonnegative. Since $G$ is connected, $Q$ is irreducible, so the Perron--Frobenius theorem tells us that its spectral radius $\lambda:=\rho(Q)$ is a simple eigenvalue of $Q$, and it has a corresponding eigenvector $\mathbf{w}$ whose entries are all strictly positive. Thus
	\[
		Q\mathbf{w}=\lambda\mathbf{w}.
	\]
	Using $Q=ZL(G)Z$, we get
	\[
		ZL(G)Z\mathbf{w}=\lambda\mathbf{w}.
	\]
	Multiplying by $Z$ and using $Z^2 = I$ gives
	\[
		L(G)(Z\mathbf{w})=\lambda(Z\mathbf{w}).
	\]
	Hence
	\[
		\mathbf{v} := Z\mathbf{w}
	\]
	is an eigenvector of $L$ with eigenvalue $\lambda$. Moreover, $v_i > 0$ when $i \in A$ and $v_i < 0$ when $i \in B$.

	Since $G$ is $\{-1,0,1\}$-diagonalizable, there exists an invertible matrix $P$ with entries in $\{-1,0,1\}$ and a diagonal matrix $D$ such that
	\[
		L(G) = PDP^{-1}.
	\]
	Since $\lambda$ is a simple eigenvalue of $Q$, and hence also a simple eigenvalue of $L(G)$, the $\lambda$-eigenspace of $L(G)$ is one-dimensional. Therefore, the column of $P$ corresponding to $\lambda$ must be a scalar multiple of $\mathbf{v}$. Let this column be $\mathbf{p}$. Since $\mathbf{p}$ is a scalar multiple of $\mathbf{v}$, which has no entries equal to $0$, we see that $\mathbf{p}$ also has no entries equal to $0$. Since every entry of $\mathbf{p}$ belongs to $\{-1,0,1\}$, but no entry equals $0$, we conclude that in fact every entry of $\mathbf{p}$ belongs to $\{-1,1\}$. Because $\mathbf{v}$ is positive on $A$ and negative on $B$, it follows that either
	\[
		\mathbf{p} = \mathbf{z} \quad \text{or} \quad \mathbf{p} = -\mathbf{z}.
	\]
	It follows that $\mathbf{z}$ is an eigenvector of $L(G)$ with corresponding eigenvalue $\lambda$, and we have proven our first claim.

	Now notice that if $i \in A$, then
	\[
		[L(G)\mathbf{z}]_i = \deg(i)z_i-\sum_{j\sim i} z_j = \deg(i) + \deg(i) = 2\deg(i).
	\]
	Since $L(G)\mathbf{z} = \lambda\mathbf{z}$ and $z_i = 1$, this implies $\lambda = 2\deg(i)$ for all $i \in A$.
	
	Similarly, if $i \in B$, then
	\[
		[L(G)\mathbf{z}]_i = -\deg(i)-\deg(i) = -2\deg(i).
	\]
	Since $L(G)\mathbf{z} = \lambda\mathbf{z}$ and $z_i = -1$, this implies $\lambda = 2\deg(i)$ for all $i \in B$. It follows that every vertex of $G$ has degree $\lambda/2$, so $G$ is regular.
\end{proof}

\begin{corollary}\label{cor:odd_bip}
    There is a connected bipartite $\{-1,0,1\}$-diagonalizable graph on $n$ vertices if and only if $n$ is even.
\end{corollary}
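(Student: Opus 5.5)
The plan is to prove the two directions separately, using Theorem~\ref{thm:bipartite_imp_regular} for necessity and an explicit family of examples for sufficiency.

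For the ``only if'' direction, suppose $G$ is a connected bipartite $\{-1,0,1\}$-diagonalizable graph on $n$ vertices. First I would dispose of $n = 1$ separately, since $K_1$ is connected, bipartite, and trivially $\{-1,0,1\}$-diagonalizable. So the statement should be read for $n \geq 2$ (or the convention adjusted), and I would note this explicitly.

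For $n \geq 2$, Theorem~\ref{thm:bipartite_imp_regular} says $G$ is $k$-regular for some $k$, with $k \geq 1$ by connectivity. Let the bipartition be $A \sqcup B$. Since every edge joins $A$ to $B$, counting edges from each side gives $k|A| = |E(G)| = k|B|$. Hence $|A| = |B|$, and $n = 2|A|$ is even.

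For the ``if'' direction, given even $n = 2m$, I would exhibit $K_{m,m}$. It is connected and bipartite. It is also $\{-1,0,1\}$-diagonalizable: $K_{m,m} = (K_m \sqcup K_m)^{\textup{c}}$, where $K_m$ is connected and $\{-1,0,1\}$-diagonalizable. The vector $(m,m)$ is balanced via the matrix $[1\ \ -1]$, so \cite[Corollary~2]{johnston2025laplacian}, recalled at the start of Section~\ref{sec:prime}, applies. (Table~\ref{tab:reg_negone} also lists $K_{m,m}$ for small $m$ as a sanity check.)

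The only delicate point is the trivial order $n=1$. Everything else is a direct combination of Theorem~\ref{thm:bipartite_imp_regular} with edge counting and the join construction.
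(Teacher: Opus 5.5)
Your proposal is correct and follows the paper's proof. Sufficiency comes from $K_{n/2,n/2}$, and necessity from Theorem~\ref{thm:bipartite_imp_regular} together with the fact that a connected regular bipartite graph of positive degree has parts of equal size. Your justification of the $K_{m,m}$ example via $(K_m \sqcup K_m)^{\textup{c}}$ and the balanced vector $(m,m)$ makes the argument more complete than the paper's bare assertion, as does your flagging of the trivial case $n=1$, which the paper sidesteps by assuming $n \geq 3$.
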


\begin{proof}
    For the ``if'' direction, simply notice that if $n$ is even then $K_{n/2,n/2}$ is $\{-1,0,1\}$-diagonalizable.
    
    For the ``only if'' direction, suppose $n \geq 3$ and $G$ is a connected bipartite $\{-1,0,1\}$-diagonalizable graph. By Theorem~\ref{thm:bipartite_imp_regular}, since $G$ is bipartite it must be regular. Since $G$ is both bipartite and regular, both parts of $G$ must have the same number of vertices, so $n$ must be even, which completes the proof.
\end{proof}

We note that neither of these results about bipartite graphs holds if the hypothesis that the graph is $\{-1,0,1\}$-diagonalizable is weakened to just Laplacian integral. For example, the path graph $P_3$ is connected, bipartite, and Laplacian integral (the eigenvalues of its Laplacian are $0$, $1$, and $3$), but it is not regular.

When we combine Theorem~\ref{thm:bipartite_imp_regular}, Corollary~\ref{cor:odd_bip}, and our computational methods of Appendix~\ref{sec:appendix_compute_lapint}, we are able to compute all connected bipartite $\{-1,0,1\}$-diagonalizable graphs of order up to $16$ (see Table~\ref{tab:bipartite_negone}).

\begin{table}[!htb]
    \centering
    \begin{tabular}{clc}\toprule
        \# of vertices ($n$) & connected bipartite $\{-1,0,1\}$-diagonalizable graphs & \# of graphs\\\toprule
        $2$ & $K_2$ & $1$ \\\midrule
        $4$ & $K_{2,2}$ & $1$ \\\midrule
        $6$ & $K_{3,3}$, \ $(K_3 \mathbin{\square} K_2)^\textup{c}$ & $2$ \\\midrule
        $8$ & $K_{4,4}$, \ $(K_4 \mathbin{\square} K_2)^\textup{c}$ & $2$ \\\midrule
        $10$ & $K_{5,5}$, \ $(K_5 \mathbin{\square} K_2)^\textup{c}$ & $2$ \\\midrule
        $12$ & $K_{6,6}$, \ $(K_6 \mathbin{\square} K_2)^\textup{c}$, \ $K_{3,3} \mathbin{\square} K_2$, \ $C_6 \mathbin{\square} K_2$, \ $C_{12}(1,5)$ & $5$ \\\midrule
        $14$ & $K_{7,7}$, \ $(K_7 \mathbin{\square} K_2)^\textup{c}$ & $2$ \\\midrule
        $16$ & $K_{8,8}$, \ $(K_8 \mathbin{\square} K_2)^\textup{c}$, \ $K_{4,4} \mathbin{\square} K_2$, \ $K_{2,2} \mathbin{\square} K_4$, \ $Q_4$, \ the Hoffman graph & $6$ \\\bottomrule
    \end{tabular}
    \caption{All connected bipartite $\{-1,0,1\}$-diagonalizable graphs on $16$ or fewer vertices. We note that $Q_4 = K_2 \mathbin{\square} K_2 \mathbin{\square} K_2 \mathbin{\square} K_2$ is the tesseract graph and $C_{12}(1,5)$ is the circulant $(1,5)$-graph (see \cite{HoGC1215}).}\label{tab:bipartite_negone}
\end{table}

\section{Conclusions and open questions}\label{sec:conclude}

We have provided computational results that significantly expand what is known about Laplacian integral and $\{-1,0,1\}$-diagonalizable graphs, most notably providing a decomposition theorem (Theorem~\ref{thm:prime_main}) for Laplacian integral graphs of prime order, and verifying that the $S_{n,n}$ conjecture holds for $n = 12$. Some open questions that remain after (and arise from) our work include:

\begin{itemize}
    \item The $n \in \{13,14,15\}$ cases of the $S_{n,n}$ conjecture are already known to hold \cite{fallat2005graphs}. This leaves the $n = 16$ case as the smallest one that is unknown; does the conjecture hold for $n = 16$?

    \item Since an order-$n$ connected graph $G$ being Laplacian integral implies that $(G^\textup{c} \sqcup K_1)^{\textup{c}}$ is an order-$(n+1)$ connected Laplacian integral graph, the function $cl(n)$ is monotonically non-decreasing. Does there exist a positive integer $N$ for which $cs(n)$ is monotonically non-decreasing when $n \geq N$? The data provided in Table~\ref{tab:graph_counts_thirteen} shows that if the answer is ``yes'' then $N \geq 13$.

    \item Our computational results show that \cite[Conjecture~5.4]{HT23} is false, since there are two counterexamples when $n = 9$. However, there are no counterexamples when $n \in \{10,11,12,13\}$, so it seems natural to adjust the conjecture to include the requirement that $n \geq 10$; is it true if this change is made?

    \item Is it NP-hard to determine whether a given vector is balanced? If so, determining $\{-1,0,1\}$-diagonalizability of a graph is also NP-hard via \cite[Corollary~3]{johnston2025laplacian}.
\end{itemize}

\section*{Acknowledgements}

The authors thank Matthew Betti and the Digital Research Alliance of Canada (\href{https://alliancecan.ca}{alliancecan.ca}) for providing computational resources that were necessary to carry out this work. L.M.B.V.\ thanks Benjamin Talbot and Liam Keliher for their help in devising the computational methods described in Appendix~\ref{sec:appendix_compute_lapint}, as well as Craig Brett for further valuable discussions regarding spectral graph theory. N.J.\ was supported by NSERC Discovery Grant number RGPIN-2022-04098. S.P.\ was supported by NSERC Discovery Grant number 1174582, the Canada Foundation for Innovation (CFI) grant number 35711, and the Canada Research Chairs (CRC) Program grant number 231250.

\bibliographystyle{alpha}
\bibliography{ref}

\newcommand{\etalchar}[1]{$^{#1}$}
\begin{thebibliography}{dLDVMT26}

\bibitem[AAF{\etalchar{+}}21]{adm2021weakly}
M.~Adm, K.~Almuhtaseb, S.~Fallat, K.~Meagher, S.~Nasserasr, M.~N. Shirazi, and A.~S. Razafimahatratra.
\newblock Weakly {H}adamard diagonalizable graphs.
\newblock {\em Linear Algebra and its Applications}, 610:86--119, 2021.

\bibitem[BBF{\etalchar{+}}22]{Breen22}
J.~Breen, S.~Butler, M.~Fuentes, B.~Lidick{\`y}, M.~Phillips, A.~W.~N. Riasanovksy, S.-Y. Song, R.~R. Villagr{\'a}n, C.~Wiseman, and X.~Zhang.
\newblock Hadamard diagonalizable graphs of order at most 36.
\newblock {\em The Electronic Journal of Combinatorics}, 29:P2.16, 2022.

\bibitem[BC76]{BC76}
B.~C. Bussemaker and D.~M. Cvetkovi{\'c}.
\newblock There are exactly 13 connected, cubic, integral graphs.
\newblock {\em Univ. Beograd. Publ. Elektrotehn. Fak. Ser. Mat. Fiz.}, (576):43--48, 1976.

\bibitem[BCR{\etalchar{+}}02]{balinska2002survey}
K.~T. Bali{\'n}ska, D.~Cvetkovi{\'c}, Z.~Radosavljevi{\'c}, S.~K. Simi{\'c}, and D.~Stevanovi{\'c}.
\newblock A survey on integral graphs.
\newblock {\em Publikacije Elektrotehni\v{c}kog fakulteta. Serija Matematika}, pages 42--65, 2002.

\bibitem[BFK11]{BFK11}
S.~Barik, S.~Fallat, and S.~Kirkland.
\newblock On {H}adamard diagonalizable graphs.
\newblock {\em Linear Algebra and its Applications}, 435:1885--1902, 2011.

\bibitem[BKSZ01]{BKSZ01}
K.~T. Bali{\'n}ska, M.~Kupczyk, S.~K. Simi{\'c}, and K.~T. Zwierzy{\'n}ski.
\newblock On generating all integral graphs on 12 vertices.
\newblock {\em Technical University of Pozna{\'n}, Computer Science Center Report}, 482, 2001.

\bibitem[BSZ09]{BSZ09}
K.~T. Bali{\'n}ska, S.~K. Simi{\'c}, and K.~T. Zwierzy{\'n}ski.
\newblock Some properties of integral graphs on 13 vertices.
\newblock {\em Technical University of Pozna{\'n}, Computing Science Centre Report}, 578, 2009.

\bibitem[CDG23]{HoGC1215}
K.~Coolsaet, S.~D'hondt, and J.~Goedgebeur.
\newblock Circulant {C}\_12(1,5) at the \emph{House of Graphs}.
\newblock \url{https://houseofgraphs.org/graphs/32806}, 2023.
\newblock [Online; accessed June 25, 2026].

\bibitem[CKK19]{CKK19}
J.-G. Caputo, I.~Khames, and A.~Knippel.
\newblock On graph {L}aplacian eigenvectors with components in $\{-1,0,1\}$.
\newblock {\em Discrete Applied Mathematics}, 269(C):120--129, 2019.

\bibitem[CSG05]{CSG05}
A.~Caprara and J.-J. Salazar-Gonz{\'a}lez.
\newblock Laying out sparse graphs with provably minimum bandwidth.
\newblock {\em INFORMS Journal on Computing}, 17(3):356--373, 2005.

\bibitem[Cve75]{Cve75}
D.~M. Cvetkovi{\'c}.
\newblock Cubic integral graphs.
\newblock {\em Univ. Beograd. Publ. Elektrotehn. Fak. Ser. Mat. Fiz.}, (498--541):107--113, 1975.

\bibitem[dLDVMT26]{Mon26}
L.~de~Lima, R.~Del-Vecchio, H.~Monterde, and H.~Teixeira.
\newblock Structured eigenbases and pair state transfer on threshold graphs.
\newblock {\em arXiv preprint arXiv:2601.13318 [math.CO]}, 2026.

\bibitem[Fie73]{fiedler1973algebraic}
M.~Fiedler.
\newblock Algebraic connectivity of graphs.
\newblock {\em Czechoslovak Mathematical Journal}, 23(2):298--305, 1973.

\bibitem[FKMN05]{fallat2005graphs}
S.~M. Fallat, S.~J. Kirkland, J.~J. Molitierno, and M.~Neumann.
\newblock On graphs whose {L}aplacian matrices have distinct integer eigenvalues.
\newblock {\em Journal of Graph Theory}, 50(2):162--174, 2005.

\bibitem[GKLZ20]{GKLZ20}
S.~Goryainov, E.~V. Konstantinova, H.~Li, and D.~Zhao.
\newblock Integral graphs obtained by dual seidel switching.
\newblock {\em Linear Algebra and its Applications}, 604:476--489, 2020.

\bibitem[GM94]{GM94}
R.~Grone and R.~Merris.
\newblock The {L}aplacian spectrum of a graph {II}.
\newblock {\em SIAM Journal on Discrete Mathematics}, 7(2):221--229, 1994.

\bibitem[GMS90]{grone1990laplacian}
R.~Grone, R.~Merris, and V.~S. Sunder.
\newblock The {L}aplacian spectrum of a graph.
\newblock {\em SIAM Journal on Matrix Analysis and Applications}, 11(2):218--238, 1990.

\bibitem[God11]{God11}
C.~Godsil.
\newblock Periodic graphs.
\newblock {\em The Electronic Journal of Combinatorics}, 18(1), 2011.

\bibitem[HKT22]{HKT22}
A.~Hameed, Z.~U. Khan, and M.~Tyaglov.
\newblock Laplacian energy and first {Z}agreb index of {L}aplacian integral graphs.
\newblock {\em Analele \c{s}tiin\c{t}ifice ale Universit\u{a}\c{t}ii "Ovidius" Constan\c{t}a. Seria Matematic\u{a}}, 30(2):133--160, 2022.

\bibitem[HP73]{HP73}
F.~Harary and E.~M. Palmer.
\newblock {\em Graphical Enumeration}.
\newblock Academic Press, 1973.

\bibitem[HS06]{harary2006graphs}
F.~Harary and A.~J. Schwenk.
\newblock Which graphs have integral spectra?
\newblock In {\em Graphs and Combinatorics: Proceedings of the Capital Conference on Graph Theory and Combinatorics at the George Washington University June 18--22, 1973}, pages 45--51. Springer, 2006.

\bibitem[HT23]{HT23}
A.~Hameed and M.~Tyaglov.
\newblock Integral {L}aplacian graphs with a unique double {L}aplacian eigenvalue, {I}.
\newblock {\em Special Matrices}, 1(1):20230111, 2023.

\bibitem[HT25]{HT25}
A.~Hameed and M.~Tyaglov.
\newblock Integral {L}aplacian graphs with a unique double {L}aplacian eigenvalue, {II}.
\newblock {\em Bulletin of the Korean Mathematical Society}, 62(2):299--317, 2025.

\bibitem[JKP{\etalchar{+}}17]{johnston2017}
N.~Johnston, S.~Kirkland, S.~Plosker, R.~Storey, and X.~Zhang.
\newblock Perfect quantum state transfer using {H}adamard diagonalizable graphs.
\newblock {\em Linear Algebra and its Applications}, 531:375--398, 2017.

\bibitem[Joh23a]{oeisA363064}
N.~Johnston.
\newblock Sequence {A363064} in \emph{{T}he {O}n-{L}ine {E}ncyclopedia of {I}nteger {S}equences}.
\newblock \url{https://oeis.org/A363064}, 2023.
\newblock Number of connected Laplacian integral graphs on n vertices. [Online; accessed June 18, 2026].

\bibitem[Joh23b]{oeisA363065}
N.~Johnston.
\newblock Sequence {A363065} in \emph{{T}he {O}n-{L}ine {E}ncyclopedia of {I}nteger {S}equences}.
\newblock \url{https://oeis.org/A363065}, 2023.
\newblock Number of Laplacian integral graphs on n vertices. [Online; accessed June 30, 2026].

\bibitem[JP25]{johnston2025laplacian}
N.~Johnston and S.~Plosker.
\newblock Laplacian $\{-1, 0, 1\}$-and $\{-1, 1\}$-diagonalizable graphs.
\newblock {\em Linear Algebra and its Applications}, 704:309--339, 2025.

\bibitem[Mer94a]{merris1994degree}
R.~Merris.
\newblock Degree maximal graphs are {L}aplacian integral.
\newblock {\em Linear Algebra and its Applications}, 199:381--389, 1994.

\bibitem[Mer94b]{merris1994laplacian}
R.~Merris.
\newblock Laplacian matrices of graphs: a survey.
\newblock {\em Linear Algebra and its Applications}, 197:143--176, 1994.

\bibitem[Mer98]{Mer98}
R.~Merris.
\newblock Laplacian graph eigenvectors.
\newblock {\em Linear Algebra and its Applications}, 278:221--236, 1998.

\bibitem[MMP25]{MMP25}
D.~McLaren, H.~Monterde, and S.~Plosker.
\newblock Weakly {H}adamard diagonalizable graphs and quantum state transfer.
\newblock {\em Linear and Multilinear Algebra}, 73(17):3763--3790, 2025.

\bibitem[MN03]{MN03}
J.~J. Molitierno and M.~Neumann.
\newblock On trees with perfect matchings.
\newblock {\em Linear Algebra and its Applications}, 362:75--85, 2003.

\bibitem[MP14]{Nauty}
B.~D. McKay and A.~Piperno.
\newblock Practical graph isomorphism, {II}.
\newblock {\em Journal of Symbolic Computation}, 60:94--112, 2014.

\bibitem[Pap76]{Pap76}
C.~H. Papadimitriou.
\newblock The {NP}-completeness of the bandwidth minimization problem.
\newblock {\em Computing}, 16(3):263--270, 1976.

\bibitem[RT01]{ravelomanana2001asymptotic}
V.~Ravelomanana and L.~Thimonier.
\newblock Asymptotic enumeration of cographs.
\newblock {\em Electronic Notes in Discrete Mathematics}, 7:58--61, 2001.

\bibitem[Slo01a]{oeisA000065}
N.~J.~A. Sloane.
\newblock Sequence {A000065} in \emph{{T}he {O}n-{L}ine {E}ncyclopedia of {I}nteger {S}equences}.
\newblock \url{https://oeis.org/A000065}, 2001.
\newblock -1 + number of partitions of n. [Online; accessed June 18, 2026].

\bibitem[Slo01b]{oeisA025828}
N.~J.~A. Sloane.
\newblock Sequence {A025828} in \emph{{T}he {O}n-{L}ine {E}ncyclopedia of {I}nteger {S}equences}.
\newblock \url{https://oeis.org/A025828}, 2001.
\newblock Expansion of 1/((1-x\^{}3)*(1-x\^{}4)*(1-x\^{}6)). [Online; accessed June 22, 2026].

\bibitem[Sta97]{stanley2011enumerative}
R.~P. Stanley.
\newblock {\em Enumerative Combinatorics}, volume~1.
\newblock Cambridge University Press, 1997.

\bibitem[Var25]{Var25}
L.~M.~B. Varona.
\newblock {MatrixBandwidth.jl}: Fast algorithms for matrix bandwidth minimization and recognition.
\newblock {\em Journal of Open Source Software}, 10(116), 2025.
\newblock Article 9136.

\bibitem[Var26]{VaronaCode}
L.~M.~B. Varona.
\newblock Small $\{-1,0,1\}$-diagonalizable graphs: A computational survey.
\newblock Zenodo, \url{https://doi.org/10.5281/zenodo.21114288}, 2026.

\bibitem[Wan05]{wang2005survey}
L.~Wang.
\newblock A survey of results on integral trees and integral graphs.
\newblock 2005.

\end{thebibliography}
\appendix
\renewcommand{\thesection}{Appendix A}

\section{Computational results}

We now summarize the various computational results invoked throughout this paper.

\renewcommand{\thesection}{A}
\subsection{Balanced vectors}\label{app:balanced}

The set of all balanced vectors with positive integer entries (up to permutation of their entries) and sum equal to $n$ was computed for $n \in \{1,2,\ldots,9\}$ in \cite[Table~2]{johnston2025laplacian}. By instead checking balancedness via depth-first search (DFS), we can push this significantly further---Table~\ref{tab:balanced_smallsum} lists all such vectors with sum up to $13$, and Table~\ref{tab:totally_balanced_smallsum_counts} provides the number of such vectors with sum up to $73$.

\begin{table}[!htb]
    \centering
    \begin{tabular}{clc}\toprule
        $n$ & balanced vectors $\mathbf{v}$ & count \\\toprule
        $1$ & $(1)$ & $1$ \\\midrule
        $2$ & $(1^2)$, \ $(2)$ & $2$ \\\midrule
        $3$ & $(1^3)$, \ $(3)$ & $2$ \\\midrule
        $4$ & $(1^4)$, \ $(2,1^2)$, \ $(2^2)$, \ $(4)$ & $4$ \\\midrule
        $5$ & $(1^5)$, \ $(2,1^3)$, \ $(5)$ & $3$ \\\midrule
        $6$ & $(1^6)$, \ $(2,1^4)$, \ $(2^2,1^2)$, \ $(2^3)$, \ $(3,1^3)$, \ $(3^2)$, \ $(6)$ & $7$ \\\midrule
        $7$ & $(1^7)$, \ $(2,1^5)$, \ $(2^2,1^3)$, \ $(3,1^4)$, \ $(3,2,1^2)$, \ $(7)$ & $6$ \\\midrule
        $8$ & $(1^8)$, \ $(2,1^6)$, \ $(2^2,1^4)$, \ $(2^3,1^2)$, \ $(2^4)$, \ $(3,1^5)$, \ $(3,2,1^3)$, \ $(3,2^2,1)$, \ $(4,1^4)$, & $13$ \\
        & $(4,2,1^2)$, \ $(4,2^2)$, \ $(4^2)$, \ $(8)$ & \\\midrule
        $9$ & $(1^9)$, \ $(2,1^7)$, \ $(2^2,1^5)$, \ $(2^3,1^3)$, \ $(3,1^6)$, \ $(3,2,1^4)$, \ $(3,2^2,1^2)$, \ $(3^2,1^3)$, \ $(3^3)$, & $12$ \\
        & $(4,1^5)$, \ $(4,2,1^3)$, \ $(9)$ & \\\midrule
        $10$ & $(1^{10})$, \ $(2,1^8)$, \ $(2^2,1^6)$, \ $(2^3,1^4)$, \ $(2^4,1^2)$, \ $(2^5)$, \ $(3,1^7)$, \ $(3,2,1^5)$, \ $(3,2^2,1^3)$, & $22$ \\
        & $(3,2^3,1)$, \ $(3^2,1^4)$, \ $(3^2,2,1^2)$, \ $(4,1^6)$, \ $(4,2,1^4)$, \ $(4,2^2,1^2)$, \ $(4,2^3)$, \ $(4,3,1^3)$, & \\
        & $(4,3,2,1)$, \ $(5,1^5)$, \ $(5,2,1^3)$, \ $(5^2)$, \ $(10)$ & \\\midrule
        $11$ & $(1^{11})$, \ $(2,1^9)$, \ $(2^2,1^7)$, \ $(2^3,1^5)$, \ $(2^4,1^3)$, \ $(3,1^8)$, \ $(3,2,1^6)$, \ $(3,2^2,1^4)$, \ $(3,2^3,1^2)$, & $22$ \\
        & $(3^2,1^5)$, \ $(3^2,2,1^3)$, \ $(3^2,2^2,1)$, \ $(4,1^7)$, \ $(4,2,1^5)$, \ $(4,2^2,1^3)$, \ $(4,3,1^4)$, \ $(4,3,2,1^2)$, & \\
        & $(5,1^6)$, \ $(5,2,1^4)$, \ $(5,2^2,1^2)$, \ $(5,3,1^3)$, \ $(11)$ \\\midrule
        $12$ & $(1^{12})$, \ $(2,1^{10})$, \ $(2^2,1^8)$, \ $(2^3,1^6)$, \ $(2^4,1^4)$, \ $(2^5,1^2)$, \ $(2^6)$, \ $(3,1^9)$, \ $(3,2,1^7)$, & $45$ \\
        & $(3,2^2,1^5)$, \ $(3,2^3,1^3)$, \ $(3,2^4,1)$, \ $(3^2,1^6)$, \ $(3^2,2,1^4)$, \ $(3^2,2^2,1^2)$, \ $(3^2,2^3)$, \ $(3^3,1^3)$, & \\
        & $(3^4)$, \ $(4,1^8)$, \ $(4,2,1^6)$, \ $(4,2^2,1^4)$, \ $(4,2^3,1^2)$, \ $(4,2^4)$, \ $(4,3,1^5)$, \ $(4,3,2,1^3)$, \\
        & $(4,3,2^2,1)$, \ $(4,3^2,1^2)$, \ $(4^2,1^4)$, \ $(4^2,2,1^2)$, \ $(4^2,2^2)$, \ $(4^3)$, \ $(5,1^7)$, \ $(5,2,1^5)$, \\
        & $(5,2^2,1^3)$, \ $(5,2^3,1)$, \ $(5,3,1^4)$, \ $(5,3,2,1^2)$, \ $(6,1^6)$, \ $(6,2,1^4)$, \ $(6,2^2,1^2)$, \ $(6,2^3)$, \\
        & $(6,3,1^3)$, \ $(6,3^2)$, \ $(6^2)$, \ $(12)$ \\\midrule
        $13$ & $(1^{13})$, \ $(2,1^{11})$, \ $(2^2,1^9)$, \ $(2^3,1^7)$, \ $(2^4,1^5)$, \ $(2^5,1^3)$, \ $(3,1^{10})$, \ $(3,2,1^8)$, \ $(3,2^2,1^6)$, & $44$ \\
        & $(3,2^3,1^4)$, \ $(3,2^4,1^2)$, \ $(3^2,1^7)$, \ $(3^2,2,1^5)$, \ $(3^2,2^2,1^3)$, \ $(3^2,2^3,1)$, \ $(3^3,1^4)$, & \\
        & $(3^3,2,1^2)$, \ $(4,1^9)$, \ $(4,2,1^7)$, \ $(4,2^2,1^5)$, \ $(4,2^3,1^3)$, \ $(4,3,1^6)$, \ $(4,3,2,1^4)$, & \\
        & $(4,3,2^2,1^2)$, \ $(4,3^2,1^3)$, \ $(4,3^2,2,1)$, \ $(4^2,1^5)$, \ $(4^2,2,1^3)$, \ $(5,1^8)$, \ $(5,2,1^6)$, & \\
        & $(5,2^2,1^4)$, \ $(5,2^3,1^2)$, \ $(5,3,1^5)$, \ $(5,3,2,1^3)$, \ $(5,3,2^2,1)$, \ $(5,3^2,1^2)$, \ $(5,4,1^4)$, & \\
        & $(5,4,2,1^2)$, \ $(6,1^7)$, \ $(6,2,1^5)$, \ $(6,2^2,1^3)$, \ $(6,3,1^4)$, \ $(6,3,2,1^2)$, \ $(13)$ & \\\bottomrule
    \end{tabular}
    \caption{A list of all balanced vectors $\mathbf{v} \in \Zp^d$ whose sum is $n$, up to re-ordering of entries. The first $9$ rows of this table appeared as \cite[Table~2]{johnston2025laplacian}. The notation $a^b$ means that $a$ occurs $b$ times in the vector.}\label{tab:balanced_smallsum}
\end{table}

\begin{table}[!htb]
    \centering
    \begin{tabular}{cc}\toprule
        $n$ & count \\\toprule
        $14$ & $73$ \\
        $15$ & $83$ \\
        $16$ & $140$ \\
        $17$ & $145$ \\
        $18$ & $234$ \\
        $19$ & $260$ \\
        $20$ & $400$ \\
        $21$ & $444$ \\
        $22$ & $656$ \\
        $23$ & $739$ \\\bottomrule
    \end{tabular} \ \ \begin{tabular}{cc}\toprule
        $n$ & count \\\toprule
        $24$ & $1,091$ \\
        $25$ & $1,217$ \\
        $26$ & $1,711$ \\
        $27$ & $1,948$ \\
        $28$ & $2,727$ \\
        $29$ & $3,071$ \\
        $30$ & $4,187$ \\
        $31$ & $4,799$ \\
        $32$ & $6,456$ \\
        $33$ & $7,317$ \\\bottomrule
    \end{tabular} \ \ \begin{tabular}{cc}\toprule
        $n$ & count \\\toprule
        $34$ & $9,683$ \\
        $35$ & $11,014$ \\
        $36$ & $14,507$ \\
        $37$ & $16,527$ \\
        $38$ & $21,317$ \\
        $39$ & $24,279$ \\
        $40$ & $31,273$ \\
        $41$ & $35,519$ \\
        $42$ & $44,987$ \\
        $43$ & $51,519$ \\\bottomrule
    \end{tabular} \ \ \begin{tabular}{cc}\toprule
        $n$ & count \\\toprule
        $44$ & $64,717$ \\
        $45$ & $73,598$ \\
        $46$ & $92,045$ \\
        $47$ & $104,842$ \\
        $48$ & $130,037$ \\
        $49$ & $148,054$ \\
        $50$ & $181,916$ \\
        $51$ & $207,181$ \\
        $52$ & $253,936$ \\
        $53$ & $288,577$ \\\bottomrule
    \end{tabular} \ \ \begin{tabular}{cc}\toprule
        $n$ & count \\\toprule
        $54$ & $350,351$ \\
        $55$ & $399,180$ \\
        $56$ & $482,844$ \\
        $57$ & $548,174$ \\
        $58$ & $660,163$ \\
        $59$ & $749,814$ \\
        $60$ & $897,423$ \\
        $61$ & $1,020,280$ \\
        $62$ & $1,214,953$ \\
        $63$ & $1,377,849$ \\\bottomrule
    \end{tabular} \ \ \begin{tabular}{cc}\toprule
        $n$ & count \\\toprule
        $64$ & $1,638,683$ \\
        $65$ & $1,855,629$ \\
        $66$ & $2,193,863$ \\
        $67$ & $2,488,178$ \\
        $68$ & $2,932,196$ \\
        $69$ & $3,316,569$ \\
        $70$ & $3,896,378$ \\
        $71$ & $4,409,396$ \\
        $72$ & $5,159,822$ \\
        $73$ & $5,837,407$ \\\bottomrule
    \end{tabular}
    \caption{The number of balanced vectors $\mathbf{v} \in \Zp^d$ whose sum is $n$, up to re-ordering of entries. The values for $n \leq 13$ can be found in Table~\ref{tab:balanced_smallsum}.}\label{tab:totally_balanced_smallsum_counts}
\end{table}

We now describe this DFS. First, notice that a vector $\vv = (v_1,\ldots,v_d)$ is balanced if and only if the set
\[
    \mathrm{span}\big\{\mathbf{a} \in \{-1,0,1\}^d : \mathbf{a} \cdot \vv = 0 \big\}
\]
is $(d-1)$-dimensional. The members of $\big\{\mathbf{a} \in \{-1,0,1\}^d : \mathbf{a} \cdot \vv = 0 \big\}$ can be generated by DFS: choose the entries of $\mathbf{a} = (a_1,\ldots,a_d)$ one at a time from $\{-1,0,1\}$, keeping track of the partial dot product
\[
    a_1v_1+\cdots+a_kv_k.
\]
A branch can be pruned whenever the partial sum is large enough in absolute value that the remaining coordinates cannot bring it back to $0$. For each full row $\mathbf{a}$ with $\mathbf{a} \cdot \vv = 0$, add $\mathbf{a}$ to a running row-reduction computation over $\mathbb{Q}$. If the rank reaches $d-1$, then $\vv$ is balanced; otherwise, $\vv$ is not balanced.

\subsection{Enumerating small Laplacian integral and $\{-1,0,1\}$-diagonalizable graphs}\label{sec:appendix_compute_lapint}

Suppose that $G$ is a graph on $n$ vertices. Our algorithm to determine whether $G$ is $\{-1,0,1\}$-diagonalizable begins by computing the distinct eigenvalues $0 = \lambda_1, \lambda_2, \ldots, \lambda_r$ of $L(G)$, terminating early and returning false if any eigenvalue is not an integer. If $G$ passes this Laplacian integrality check, we then seek to construct matrices $W_1, W_2, \ldots, W_r$ such that the columns of each $W_i$ constitute precisely the set of all $\{-1,0,1\}$-vectors (up to linear span) in the $\lambda_i$-eigenspace of $L(G)$.

If $G$ is connected, then its kernel is spanned by the all-ones vector, so we can simply set $W_1 = \mathbf{1}$. Otherwise, we iterate over every $\{-1,0,1\}$-vector $\vv \in \R^n \setminus \{\mathbf{0}\}$ whose first nonzero entry is equal to $1$, then check whether $L(G)\vv = \mathbf{0}$, and add $\vv$ as a column to $W_1$ if so (there are $(3^n - 1) / 2$ such vectors). To aid in computing the remaining matrices $W_2, W_3, \ldots, W_r$, we observe that every non-kernel eigenvector must be orthogonal to the kernel (which contains $\mathbf{1}$ whether $G$ is connected or not). We therefore iterate over every $\{-1,0,1\}$-vector $\mathbf{w} \in \R^n \setminus \{\mathbf{0}\}$ whose first nonzero entry is equal to $1$ and which has an equal number of $-1$s and $1$s. We then check whether $L(G)\mathbf{w} = \lambda_i\mathbf{w}$ for any $i \ge 2$ and add $\mathbf{w}$ as a column to $W_i$ if so. (This orthogonality filter reduces the number of such vectors to $\frac{1}{2} \sum_{k = 1}^{\lfloor n / 2 \rfloor} \binom{n}{k} \binom{n - k}{k}$, which is in $\Theta(3^n / \sqrt{n})$ as opposed to the na\"ive count of $(3^n - 1) / 2 \in \Theta(3^n)$.)

Once this is done, we iterate over $W_1, W_2, \ldots, W_r$ and compute a rank-revealing QR (or RRQR) factorization of each. If the rank of any $W_i$ is found to be less than the multiplicity $\mu_i$ of the eigenvalue $\lambda_i$, then no associated $\{-1,0,1\}$-eigenbasis exists (and thus $G$ is not $\{-1,0,1\}$-diagonalizable), so we return false. Otherwise, we use the column pivots of each RRQR factorization to find exactly $\mu_i$ $\{-1,0,1\}$-eigenvectors associated with $\lambda_i$, showing that $G$ is indeed $\{-1,0,1\}$-diagonalizable. In this case, we return true along with a diagonalizing $\{-1,0,1\}$-matrix. Once we combine this methodology with the geng and genbg programs (both part of the nauty package \cite{Nauty}) to iterate over all connected, connected regular, and connected bipartite regular graphs of a given order, we obtain lists of all small Laplacian integral and $\{-1,0,1\}$-diagonalizable graphs; see Tables~\ref{tab:bipartite_negone}, \ref{tab:graph_counts_thirteen}, \ref{tab:graph_counts_regular}, and~\ref{tab:graph_counts_regular_disconnected}, as well as \cite{VaronaCode}.

\begin{table}[!htb]
    \centering
    \begin{tabular}{c|cccc|cccc}\toprule
        $n$ & $l(n)$ & $cl(n)$ & $rl(n)$ & $crl(n)$ & $s(n)$ & $cs(n)$ & $rs(n)$ & $crs(n)$ \\\toprule
        $1$ & $1$ & $1$ & $1$ & $1$ & $1$ & $1$ & $1$ & $1$\\
        $2$ & $2$ & $1$ & $2$ & $1$ & $2$ & $1$ & $2$ & $1$ \\
        $3$ & $4$ & $2$ & $2$ & $1$ & $3$ & $1$ & $2$ & $1$ \\
        $4$ & $10$ & $5$ & $4$ & $2$ & $7$ & $3$ & $4$ & $2$ \\
        $5$ & $24$ & $12$ & $2$ & $1$ & $10$ & $2$ & $2$ & $1$ \\
        $6$ & $70$ & $37$ & $8$ & $5$ & $23$ & $8$ & $8$ & $5$ \\
        $7$ & $188$ & $94$ & $4$ & $2$ & $34$ & $5$ & $3$ & $1$ \\
        $8$ & $553$ & $280$ & $10$ & $6$ & $81$ & $26$ & $10$ & $6$ \\
        $9$ & $1,721$ & $912$ & $10$ & $7$ & $123$ & $16$ & $6$ & $3$ \\
        $10$ & $5,716$ & $3,164$ & $22$ & $15$ & $250$ & $45$ & $14$ & $7$ \\
        $11$ & $16,848$ & $8,424$ & $6$ & $3$ & $392$ & $35$ & $4$ & $1$ \\
        $12$ & $62,052$ & $35,883$ & $74$ & $60$ & $947$ & $263$ & $59$ & $46$ \\
        $13$ & $183,836$ & $91,918$ & $12$ & $6$ & $1,445$ & $124$ & $6$ & $1$ \\
        $14$ & ? & ? & $156$ & $136$ & ? & ? & $19$ & $5$ \\
        $15$ & ? & ? & $168$ & $154$ & ? & ? & $25$ & $16$ \\\bottomrule
    \end{tabular}
    \caption{The number of Laplacian integral and $\{-1,0,1\}$-diagonalizable graphs that are not necessarily connected, are connected, are regular, or are connected and regular, for small values of $n$. Extensions of this table for prime $n$ are provided in Tables~\ref{tab:prime_101_all} and~\ref{tab:regular_prime_101}.}\label{tab:graph_counts_thirteen}
\end{table}

\begin{table}[!htb]
    \centering
    \begin{tabular}{c|cccccccccccccccccc|c}\toprule
        $n$ / $r$ & $1$ & $2$ & $3$ & $4$ & $5$ & $6$ & $7$ & $8$ & $9$ & $10$ & $11$ & $12$ & $13$ & $14$ & $15$ & $16$ & $17$ & $18$ & Total \\\toprule
        $2$ & $1$ & -- & -- & -- & -- & -- & -- & -- & -- & -- & -- & -- & -- & -- & -- & -- & -- & -- & $1$ \\
        $3$ & $0$ & $1$ & -- & -- & -- & -- & -- & -- & -- & -- & -- & -- & -- & -- & -- & -- & -- & -- & $1$ \\
        $4$ & $0$ & $1$ & $1$ & -- & -- & -- & -- & -- & -- & -- & -- & -- & -- & -- & -- & -- & -- & -- & $2$ \\
        $5$ & $0$ & $0$ & $0$ & $1$ & -- & -- & -- & -- & -- & -- & -- & -- & -- & -- & -- & -- & -- & -- & $1$ \\
        $6$ & $0$ & $1$ & $2$ & $1$ & $1$ & -- & -- & -- & -- & -- & -- & -- & -- & -- & -- & -- & -- & -- & $5$ \\
        $7$ & $0$ & $0$ & $0$ & $1$ & $0$ & $1$ & -- & -- & -- & -- & -- & -- & -- & -- & -- & -- & -- & -- & $2$ \\
        $8$ & $0$ & $0$ & $1$ & $2$ & $1$ & $1$ & $1$ & -- & -- & -- & -- & -- & -- & -- & -- & -- & -- & -- & $6$ \\
        $9$ & $0$ & $0$ & $0$ & $4$ & $0$ & $2$ & $0$ & $1$ & -- & -- & -- & -- & -- & -- & -- & -- & -- & -- & $7$ \\
        $10$ & $0$ & $0$ & $3$ & $1$ & $2$ & $5$ & $2$ & $1$ & $1$ & -- & -- & -- & -- & -- & -- & -- & -- & -- & $15$ \\
        $11$ & $0$ & $0$ & $0$ & $0$ & $0$ & $1$ & $0$ & $1$ & $0$ & $1$ & -- & -- & -- & -- & -- & -- & -- & -- & $3$ \\
        $12$ & $0$ & $0$ & $2$ & $8$ & $13$ & $14$ & $10$ & $7$ & $4$ & $1$ & $1$ & -- & -- & -- & -- & -- & -- & -- & $60$ \\
        $13$ & $0$ & $0$ & $0$ & $0$ & $0$ & $0$ & $0$ & $3$ & $0$ & $2$ & $0$ & $1$ & -- & -- & -- & -- & -- & -- & $6$ \\
        $14$ & $0$ & $0$ & $0$ & $7$ & $19$ & $32$ & $33$ & $20$ & $14$ & $7$ & $2$ & $1$ & $1$ & -- & -- & -- & -- & -- & $136$ \\
        $15$ & $0$ & $0$ & $0$ & $12$ & $0$ & $58$ & $0$ & $59$ & $0$ & $20$ & $0$ & $4$ & $0$ & $1$ & -- & -- & -- & -- & $154$ \\
        $16$ & $0$ & $0$ & $0$ & $2$ & $33$ & ? & ? & ? & ? & $36$ & $11$ & $14$ & $4$ & $1$ & $1$ & -- & -- & -- & ? \\
        $17$ & $0$ & $0$ & $0$ & $0$ & $0$ & $0$ & $0$ & $0$ & $0$ & $7$ & $0$ & $19$ & $0$ & $2$ & $0$ & $1$ & -- & -- & $29$ \\
        $18$ & $0$ & $0$ & $0$ & $26$ & ? & ? & ? & ? & ? & ? & ? & ? & $50$ & $18$ & $6$ & $1$ & $1$ & -- & ? \\
        $19$ & $0$ & $0$ & $0$ & $0$ & $0$ & $0$ & $0$ & $0$ & $0$ & $1$ & $0$ & $25$ & $0$ & $27$ & $0$ & $4$ & $0$ & $1$ & $58$ \\\bottomrule
    \end{tabular}
    \caption{The number $crl(n,r)$ of connected $r$-regular $n$-vertex integral graphs. Other known values include $crl(n,r) = 0$ when $r \leq 2$ and $n \geq 7$, $crl(20,3) = 2$, $crl(24,3) = crl(30,3) = 1$ and $crl(n,3) = 0$ otherwise, and $crl(23,12) = 1$, $crl(23,14) = 30$, $crl(23,18) = 120$, $crl(23,20) = 4$ and $crl(23,22) = 1$.}\label{tab:graph_counts_regular}
\end{table}

\begin{table}[!htb]
    \centering
    \begin{tabular}{c|cccccccccccccccccc|c}\toprule
        $n$ / $r$ & $1$ & $2$ & $3$ & $4$ & $5$ & $6$ & $7$ & $8$ & $9$ & $10$ & $11$ & $12$ & $13$ & $14$ & $15$ & $16$ & $17$ & $18$ & Total \\\toprule
        $2$ & $1$ & -- & -- & -- & -- & -- & -- & -- & -- & -- & -- & -- & -- & -- & -- & -- & -- & -- & $1$ \\
        $3$ & $0$ & $1$ & -- & -- & -- & -- & -- & -- & -- & -- & -- & -- & -- & -- & -- & -- & -- & -- & $1$ \\
        $4$ & $1$ & $1$ & $1$ & -- & -- & -- & -- & -- & -- & -- & -- & -- & -- & -- & -- & -- & -- & -- & $3$ \\
        $5$ & $0$ & $0$ & $0$ & $1$ & -- & -- & -- & -- & -- & -- & -- & -- & -- & -- & -- & -- & -- & -- & $1$ \\
        $6$ & $1$ & $2$ & $2$ & $1$ & $1$ & -- & -- & -- & -- & -- & -- & -- & -- & -- & -- & -- & -- & -- & $7$ \\
        $7$ & $0$ & $1$ & $0$ & $1$ & $0$ & $1$ & -- & -- & -- & -- & -- & -- & -- & -- & -- & -- & -- & -- & $3$ \\
        $8$ & $1$ & $1$ & $2$ & $2$ & $1$ & $1$ & $1$ & -- & -- & -- & -- & -- & -- & -- & -- & -- & -- & -- & $9$ \\
        $9$ & $0$ & $2$ & $0$ & $4$ & $0$ & $2$ & $0$ & $1$ & -- & -- & -- & -- & -- & -- & -- & -- & -- & -- & $9$ \\
        $10$ & $1$ & $2$ & $5$ & $2$ & $2$ & $5$ & $2$ & $1$ & $1$ & -- & -- & -- & -- & -- & -- & -- & -- & -- & $21$ \\
        $11$ & $0$ & $1$ & $0$ & $1$ & $0$ & $1$ & $0$ & $1$ & $0$ & $1$ & -- & -- & -- & -- & -- & -- & -- & -- & $5$ \\
        $12$ & $1$ & $4$ & $7$ & $10$ & $14$ & $14$ & $10$ & $7$ & $4$ & $1$ & $1$ & -- & -- & -- & -- & -- & -- & -- & $73$ \\
        $13$ & $0$ & $2$ & $0$ & $3$ & $0$ & $0$ & $0$ & $3$ & $0$ & $2$ & $0$ & $1$ & -- & -- & -- & -- & -- & -- & $11$ \\
        $14$ & $1$ & $2$ & $7$ & $14$ & $20$ & $33$ & $33$ & $20$ & $14$ & $7$ & $2$ & $1$ & $1$ & -- & -- & -- & -- & -- & $155$ \\
        $15$ & $0$ & $4$ & $0$ & $20$ & $0$ & $59$ & $0$ & $59$ & $0$ & $20$ & $0$ & $4$ & $0$ & $1$ & -- & -- & -- & -- & $167$ \\
        $16$ & $1$ & $4$ & $14$ & $11$ & $36$ & ? & ? & ? & ? & $36$ & $11$ & $14$ & $4$ & $1$ & $1$ & -- & -- & -- & ? \\
        $17$ & $0$ & $2$ & $0$ & $19$ & $0$ & $7$ & $0$ & $0$ & $0$ & $7$ & $0$ & $19$ & $0$ & $2$ & $0$ & $1$ & -- & -- & $57$ \\
        $18$ & $1$ & $6$ & $18$ & $50$ & ? & ? & ? & ? & ? & ? & ? & ? & $50$ & $18$ & $6$ & $1$ & $1$ & -- & ? \\
        $19$ & $0$ & $4$ & $0$ & $27$ & $0$ & $25$ & $0$ & $1$ & $0$ & $1$ & $0$ & $25$ & $0$ & $27$ & $0$ & $4$ & $0$ & $1$ & $115$ \\\bottomrule
    \end{tabular}
    \caption{The number $rl(n,r)$ of (not necessarily connected) $r$-regular $n$-vertex integral graphs.}\label{tab:graph_counts_regular_disconnected}
\end{table}

It is worth noting that much of the data archived at \cite{VaronaCode} also records a property known as \emph{$\{-1,0,1\}$-bandwidth}, which we describe now. First, recall that the \emph{bandwidth} of an $m \times m$ matrix $X$ is the minimum integer $b \in \{0, 1, \ldots, m - 1\}$ such that $[P^{\mathrm{T}}XP]_{i,j} = 0$ for all $i, j$ with $\lvert i - j \rvert > b$ for some permutation matrix $P$. If $L(G)$ is not $\{-1,0,1\}$-diagonalizable, then the $\{-1,0,1\}$-bandwidth of $G$ is simply $\infty$; otherwise, the $\{-1,0,1\}$-bandwidth of $G$ is the minimum integer $k \in \{1, 2, \ldots, n\}$ such that there exists some $\{-1,0,1\}$-matrix $W$ that diagonalizes $L(G)$ and whose Gram matrix $W^{\mathrm{T}}W$ has bandwidth $k - 1$ \cite{johnston2025laplacian}. (This unfortunate off-by-one mismatch between matrix bandwidth and $\{-1,0,1\}$-bandwidth arises from different indexing conventions in computer science and mathematics.) By orthogonality of the eigenspaces of $L(G)$, this condition is equivalent to there existing, for each $i$, some $n \times \mu_i$ $\{-1,0,1\}$-matrix $B_i$ whose columns form a $\lambda_i$-eigenbasis and whose Gram matrix $B_i^{\mathrm{T}}B_i$ has bandwidth $\le k - 1$---then we simply set $W = \begin{bmatrix}
    B_1 & B_2 & \cdots & B_r
\end{bmatrix}$.

To compute the $\{-1,0,1\}$-bandwidth of $G$, we begin in the same manner as we do when checking if $G$ is $\{-1,0,1\}$-diagonalizable: by constructing all $W_i$'s and checking that they each have rank equal to $\mu_i$. Upon confirming $\{-1,0,1\}$-diagonalizability in this manner, we then determine, for incrementing values of $k$ starting at $1$, whether $G$ has $\{-1,0,1\}$-bandwidth at most $k$ via the following procedure. For each $W_i$, we perform a DFS of all column subsets of $W_i$, seeking a subset that forms an $n \times \mu_i$ matrix $B_i$ whose Gram matrix $B_i^{\mathrm{T}}B_i$ has bandwidth $\le k - 1$. We do this by starting with the empty set as a base case, then recursively adding columns and performing both a linear independence check and a ``Gramian bandwidth $\le k - 1$'' check at every node of the search tree, backtracking in the standard DFS manner whenever either of these conditions is violated. (Computing matrix bandwidth is known to be NP-complete \cite{Pap76}, so we use a pre-existing, highly optimized Julia implementation of \cite{CSG05}'s algorithm made available by \cite{Var25}.) Once we have succeeded in finding a valid subset for every eigenspace, we stop incrementing and return the current value of $k$ along with the corresponding diagonalizing $\{-1,0,1\}$-matrix.

The nature of our $\{-1,0,1\}$-bandwidth algorithm makes it untenable when any eigenspace either is too large or has too many $\{-1,0,1\}$-eigenvectors in it, even for some graphs on as few as $16$ vertices. (Both cases result in a combinatorial explosion of the number of cardinality-$\mu_i$ column subsets of the corresponding $W_i$ to check.) Additionally, even our $\{-1,0,1\}$-diagonalizability algorithm becomes infeasible when the size of the input graph exceeds $19$ or so because of the $\Theta(3^n / \sqrt{n})$ (or $\Theta(3^n)$ if the graph is disconnected) growth of the number of potential $\{-1,0,1\}$-eigenvectors to check.

The Julia code that produced these results, alongside the data itself (stored in Apache Arrow format), can be found at \cite{VaronaCode}. All computations were run on the Nibi supercomputing cluster at the University of Waterloo, with access provided by Matthew Betti and the Digital Research Alliance of Canada.

\end{document}